\theoremstyle{plain}
\newtheorem{theorem}{Theorem}[section]
\newtheorem{prop}[theorem]{Proposition}
\newtheorem{lemme}[theorem]{Lemma}
\theoremstyle{definition}
\newtheorem{definition}[theorem]{Definition}
\newtheorem{ex}[theorem]{Example}
\newtheorem{rque}[theorem]{Remark}
\title{Nonlinear historical superprocess approximations for population models with past dependence}
\author{Sylvie Méléard\thanks{CMAP, Ecole Polytechnique, UMR CNRS 7641, Route de Saclay, 91128 Palaiseau Cédex, France} ~ and ~ Viet Chi Tran\thanks{Equipe Probabilité Statistique, Laboratoire Paul Painlevé, UMR CNRS 8524, UFR de Mathématiques, Université des Sciences et Technologies Lille 1, Cité Scientifique, 59655 Villeneuve d'Ascq Cédex, France ; CMAP, Ecole Polytechnique}}
\date{\today}
\numberwithin{equation}{section}
\newcommand{\Co}{\mathcal{C}}
\newcommand{\Cov}{\mbox{Cov}}
\def\Id{\mathbf{Id}}
\def\D{\mathbb{D}}
\def\H{\mathcal{H}}
\def\Q{\mathbb{Q}}
\def\N{\mathbb{N}}
\def\P{\mathbb{P}}
\def\R{\mathbb{R}}
\def\E{\mathbb{E}}
\def\dSk{\mathbf{d}_{\mbox{{\scriptsize Sk}}}}
\def\ind{{\mathchoice {\rm 1\mskip-4mu l} {\rm 1\mskip-4mu l}
{\rm 1\mskip-4.5mu l} {\rm 1\mskip-5mu l}}}
\def\eg{\textit{e.g.} }
\def\ie{\textit{i.e.} }
\def\etal{\textit{et al.} }
\begin{document}

\maketitle

\begin{abstract}
We are interested in the evolving genealogy of a birth and death process with trait structure and ecological interactions. Traits are hereditarily transmitted from a parent to its offspring unless a mutation occurs. The dynamics may depend on the trait of the ancestors and on its past and allows interactions between individuals through their lineages. We define an interacting historical particle process  describing the  genealogies of the living individuals; it takes values in the space of point measures  on an infinite dimensional càdlàg path space. This individual-based process can be approximated by  a nonlinear historical superprocess, under the assumptions of large populations, small individuals and allometric demographies. Because of the interactions, the branching property fails and we use martingale problems and fine couplings between our population and independent branching particles. Our convergence theorem is illustrated by two examples of current interest in biology. The first one relates the biodiversity history of a population and its phylogeny, while the second treats a spatial model with competition between individuals through their past trajectories.\end{abstract}

\noindent\textit{Keywords: }Nonlinear historical superprocess; Genealogical interacting particle system; Limit theorem; Evolution models.\\
\noindent\textit{AMS Subject Classification: }60J80, 60J68, 60K35.\\

\section{Introduction}

\par The evolution of genealogies in population dynamics is a major problem, which motivated an abundant literature and has applications to evolution and population genetics. Our purpose here is to generalize the existing models by emphasizing the ecological interactions, namely the competition between individuals for limited resources. In this paper, we construct a structured birth and death process with mutation and competition, whose dynamics depends on the past.
Each individual is characterized by a vector trait $x\in \R^d$, which remains constant during the individual's life and is transmitted hereditarily unless a mutation occurs. The birth and death rates of an individual can depend on the traits of its ancestors or on the trait's age in the lineage. Moreover individuals interact with each other. \\
Here, we are interested in keeping track of the genealogies of individuals with small weights, in large populations with allometric demographies (i.e. short individual lives and reproduction times).
At a given time $t$, we associate to each individual its lineage until $t$ defined as a function which associates to each $s\leq t$ the trait of its ancestor living at $s$ (see Fig. \ref{fig1}). Since each individual keeps a constant trait during its life, these lineages are càdlàg piecewise constant paths. The population is described by a point measure on the lineage space.
To reflect allometric demographies, we introduce a parameter $n$ which scales the size of the population. The individuals are weighted by $1/n$ to keep the total biomass of constant order when $n$ varies. Moreover, lifetimes and gestation lengths are proportional to this weight. Hence birth and death rates are of order $n$ and preserve the demographic balance. Mutation steps are assumed to have a variance of order $1/n$. The trait evolution is driven by mutations and competition between individuals.\\

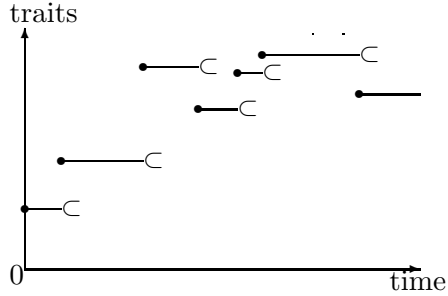
\begin{figure}[ht]
\begin{center}
    \unitlength=0.4cm
    \begin{picture}(15,10)
      \put(1,1){\vector(1,0){13}} \put(1,1){\vector(0,1){8}}
      \put(0.5,0.5){0}
      \put(0.5,9.2){traits}
      \put(13,0.3){time}
      \put(1,3){\line(1,0){1.2}}\put(1,3){\circle*{0.25}}\put(2.2,2.8){{\small$\subset$}}
      \put(2.2,4.6){\line(1,0){2.7}}\put(2.2,4.6){\circle*{0.25}}\put(4.9,4.4){{\small$\subset$}\dottedline(4.9,4.4)(5.9,4.4)}
      \put(4.9,7.7){\line(1,0){1.8}}\put(4.9,7.7){\circle*{0.25}}\put(6.7,7.5){{\small$\subset$}}
      \put(6.7,6.3){\line(1,0){1.3}}\put(6.7,6.3){\circle*{0.25}}\put(8,6.1){{\small$\subset$}}
      \put(8,7.5){\line(1,0){0.8}}\put(8,7.5){\circle*{0.25}}\put(8.8,7.3){{\small$\subset$}}
      \put(8.8,8.1){\line(1,0){3.2}}\put(8.8,8.1){\circle*{0.25}}\put(12,7.9){{\small$\subset$}}
      \put(12,6.8){\line(1,0){2}}\put(12,6.8){\circle*{0.25}}
    \end{picture}
  \end{center}
  \vspace{-0.5cm}
  \caption{{\small\textit{Example of a path constituted with ancestral traits. 
  }}}\label{fig1}
\end{figure}

\noindent We study the convergence of these processes in large populations, when $n$ tends to infinity. We proceed with tightness-uniqueness arguments inspired by  Dawson and Perkins \cite{dawsonperkinsAMS} for the historical super-Brownian process (without interaction). In our model, there is no branching property, impeding the use of Laplace's transform arguments. We introduce a new infinite dimensional martingale problem and use fine comparisons between our population and independent branching particles. Moreover, the measure-valued processes on path space that we study are discontinuous, which makes it necessary to define a new class of test functions.
In the limit, we obtain a nonlinear historical superprocess, generalizing the work of Perkins \cite{perkinsAMS}. In particular, past dependence and competition can be taken into account in the lineages. Examples are given, suggesting that the historical processes may open the way to models of evolution without the assumption of rare mutations and time scale separations. Let us remark that our model allows the description of both genealogies and population densities in the forward physical time, with variable population size including extinction phenomena. This can be a first step to reconstruct the past biodiversity from the phylogenies of living species with ecological interactions.

\medskip \noindent From this convergence result, we study the dynamics of the distribution of genealogies. We show  that it can be seen as the dynamics of a diffusive particle system resampled in such a way that individuals with larger allometric (diffusive) coefficient or with higher growth rate are more likely to be chosen.  A particular case is when these two rates are constant. Then, the intensity of the distribution of the genealogies corresponds to a Fleming-Viot process with genealogies, similar to the one introduced in  Greven-Pfaffelhuber-Winter \cite{grevenpfaffelhuberwinter}.  In the case without interaction or with logistic competition without past or trait dependence, one can also obtain Feynman-Kac's formulas for the mass process renormalized by its expectation. This formula can help in getting easier simulation schemes for certain demographic quantities.
 Let us note that more generally, in absence of interaction, coalescent processes provide another modeling of genealogies (see Berestycki \cite{berestycki} and references therein for a survey). Depperschmidt, Greven, Pfaffelhuber and Winter \cite{depperschmidtgrevenpfaffelhuber, grevenpfaffelhuberwinter} represent genealogical trees (for instance $\Lambda$-coalescents) as marked ultra-metric spaces and the absence of interaction allow them to study genealogical distances by using Laplace transforms. In a recent work, Barton, Etheridge and Véber \cite{bartonetheridgeveber} study the genealogies of a spatial version of $\Lambda$-Fleming-Viot processes and of their various limits. All these models allow the incorporation of selection and mutation (see also \eg \cite{kroneneuhauser, bartonetheridge,etheridgepfaffelhuberwakolbinger}) but not of competition between individuals, which is the main aim of our work. \\


\noindent  In  Section 2, we construct the historical particle system whose dynamics depends on the past. The diffusive limit in large population is obtained in Section 3 under large population and allometric demography assumptions. We prove tightness of the laws of the historical particle processes  with new arguments and test functions  accounting for interactions and jumps.
We then identify the limiting values as solutions of a nonlinear martingale problem for which uniqueness is stated. The distribution of genealogies is studied in Section 4 and examples are carried in Section 5. The first example deals with an evolution model of adaptive dynamics with local (see \cite{dieckmanndoebeli}) competition. The historical superprocess dynamics  shows that for a range of parameters, the population separates into groups concentrated around some trait values. This cannot be observed by the unique information on the (classical) superprocess as noticed in Figure  \ref{DD1}.
This example suggests that the historical processes may allow one to understand evolution  without the assumption of rare mutations and time scale separation.  For example, our model could be a basis to reconstruct the past biodiversity from the actual molecular phylogenies. Until now the models which have been used  do not take into account the interaction between individuals (see Morlon et al. \cite{morlon}).
The second example is a spatial model (see \cite{adlertribe} or \cite[p.50]{perkinsAMS}): particles consume resources where they are locating. Therefore, the offspring arriving in previously habited regions are penalized. We will see that this tends to separate the cloud of particles in several distinct families whose common ancestor is very old.

\bigskip \noindent
\textbf{Notation:}
For a given metric space $E$, we denote by $\mathbb{D}_E=\mathbb{D}(\R_+,E)$ the space of càdlàg functions from $\R_+$ to $E$. For $E=\R$, we will use the more simple notation $\mathbb{D}=\mathbb{D}(\R_+,\R)$. These spaces are embedded with the Skorohod topology  which makes them Polish spaces. (\eg \cite{billingsley,jakubowski,joffemetivier}, see also \eqref{dsk} in appendix).

\noindent For a function $x\in \mathbb{D}_E$ and $t>0$, we denote by $x^t$ the stopped function defined by $x^t(s)=x(s\wedge t)$ and by $x^{t-}$ the function defined by $x^{t-}(s) = \lim_{r\uparrow t} x^r(s)$. We will also often write $x_t=x(t)$ for the value of the function at time $t$.
For $y,w\in \mathbb{D}_{E}$ and $t\in \R_+$, we denote by $(y|t|w)\in \mathbb{D}_{E}$ the following path:
\begin{equation}
(y|t|w)=\left\{\begin{array}{ccc}
y_u & \mbox{ if } & u<t\\
w_{u-t} & \mbox{ if } & u\geq t.
\end{array}\right.
\end{equation}
When the path $w$ is constant with $\forall u\in \R_+,\,w_u=x$, we will write $(y|t|x)$ with a notational abuse.

\medskip \noindent
 We denote by $\mathcal{M}_F(E)$ (resp. $\mathcal{M}_P^n(E)$, $\mathcal{P}(E)$) the set of finite measures on $E$ (resp. of point measures renormalized by $1/n$, of probability measures). These spaces are embedded with the topology of weak convergence. 

\section{The historical particle system}

In this section, we construct the finite interacting historical particle system that we will study. Trait-structured particle systems without dependence on the past have been considered in Fournier and Méléard \cite{fourniermeleard} or Champagnat \etal \cite{champagnatferrieremeleard}. For populations with age-structure, we refer to Jagers \cite{jagerslivre,jagers} and Méléard and Tran \cite{meleardtran,trangdesdev} for instance. Here, we are inspired by these works and propose a birth and death particle system where the lineage of each particle, \ie the traits of its ancestors, is encoded into a path of $\D_{\R^d}$.

\subsection{Lineage}

We consider a discrete population in continuous time where the individuals reproduce asexually and die with rates that depend on a
hereditary trait and on their past. Each individual is associated with a quantitative trait transmitted from its parent except when a mutation occurs.
The rates may express through the traits carried by the ancestors of the individual. One purpose is for example to take into account the accumulation of beneficial
and deleterious mutations through generations.\\

\noindent Individuals are characterized by a trait $x\in \R^d$. The lineage or past history of an individual is defined by the succession of ancestral traits with their appearance times and by the succession of ancestral
reproduction times (birth of new individuals). To an individual of trait $x$ born at time $S_m$, having $m-1$ ancestors born after $t=0$ at times $S_1=0<S_2<\dots <S_{m-1}$, with $S_{m-1}<S_m$, and of traits $(x_1,x_2,\dots x_{m-1})$, we associate the path
\begin{equation}
y_t=\sum_{j=1}^{m-1} x_{j} \mathbf{1}_{S_{j}\leq t<S_{j+1}}+x \mathbf{1}_{S_m\leq t}.\label{deftrajectoire}
\end{equation}
This path is called the \textit{lineage} of the individual. We denote by $\mathcal{L}$ the set of possible lineages of the form \eqref{deftrajectoire}. Since a path in $\mathcal{L}$ is entirely characterized by the integer $m$ and the sequence $(0,x_1,\dots,s_{m-1},x_{m-1},s_m,x)$ of jump times and traits, it is possible to describe each element of $\mathcal{L}$ by an element of $\N\times \bigcup_{m\in \N} (\R_+\times \R^d)^m$ embedded with a natural lexicographical order.

\subsection{Population dynamics}\label{sectiondescriptionsuperp}

Let us introduce a parameter $n\in \N^*=\{1,2,\dots\}$ scaling  the carrying capacity, when the total amount of resources is fixed. To keep the total biomass constant, individuals are attributed a weight  $1/n$. The population is represented by a point measure as follows:
\begin{equation}
X^n_t := \frac{1}{n}\sum_{i=1}^{N^n_t}\delta_{y^i_{.\wedge t}}\; \in \; \mathcal{M}^n_P(\mathcal{L}) \subset \mathcal{M}^n_P(\D_{\R^d}),\label{defht}
\end{equation}where $N^n_t=n\, \langle X^n_t,1\rangle$ is the number of individuals alive at time $t$.\\




\noindent  The reproduction is asexual and the offsprings inherit  the trait of the ancestor except when a mutation occurs.  Death can be due to the background of each individual or  to competition with the other individuals.  We consider allometric demographies where lifetimes and gestation lengths of individuals are proportional to the biomass. Thus, birth and death rates are of order $n$, but respect the constraint of preservation of the demographic balance. Also, the mutation steps are rescaled by $1/n$.

\noindent Let us now define the population dynamics. For $n\in \N^*$, we consider an individual characterized at time $t$ by the lineage $y\in \D_{\R^d}$ in a population $X^n\in \D_{\mathcal{M}_P^n(\D_{\R^d})}$.\\

\noindent \textbf{Reproduction:} The birth rate at time $t$ is $b^n(t,y)$, where
$$ b^n(t,y)= n\,r(t,y)+ b(t,y).$$
The function $b$ is a continuous nonnegative real function on $\R_+\times \D_{\R^d}$. For instance, $b$ can be chosen in the form:
\begin{align}
 b(t,y)= & B\Big(\int_0^t y_{t-s} \nu_b(ds)\Big)\label{birthrate2}
 \end{align} where $B$ is a continuous function bounded by $\bar{B}$ and $\nu_b$ is a Radon measure on $\R^+$.  The allometric function $r$ is given by:
\begin{equation}
r(t,y)=R\Big(\int_0^t y_{t-s}\nu_r(ds)\Big)\label{def:R}
\end{equation}where $R$ is continuous and bounded below and above by $\underline{R}>0$ and $\bar{R}>0$, and where $\nu_r$ is a Radon measure on $\R_+$. We also assume that $R$ is Lipschitz continuous, implying that $R^{1/2}$ is also Lipschitz.

\medskip \noindent
When an individual with trait $y_{t_-}$ gives birth at time $t$, the new offspring is either a mutant or a clone.  With probability $1-p\in [0,1]$,
the new individual is a clone of its parent, with same trait $y_{t_-}$
and same lineage $y$.  With probability $p\in [0,1]$, the offspring is a mutant of trait
$y_{t_-}+h$, where $h$ is drawn in the distribution $k^n(h)\,dh$. To  this mutant is associated the lineage $(y|t|y_{t_-}+h)$.
For the sake of simplicity,  the mutation density $k^n(h)$ is assumed to be a Gaussian density with mean $0$ and covariance matrix $\sigma^2 \, \Id/n$. However the model could be generalized for instance to mutation densities $k^n(y_{t_-},h)$ with dependence on the parent's trait. Let us  introduce the notation:
\begin{equation}
K^n(dh)=p k^n(h)dh+(1-p)\delta_0(dh).\label{def_K^n}
\end{equation}

\begin{ex}
(i) If  $\nu_b(ds)=\delta_0(ds)$, then $\int_0^t y_{t-s} \nu_b(ds)=y_t$ is the trait of the individual of the lineage $y$ living at time $t$.\\
(ii) If  $\nu_b(ds)=e^{-\alpha s}ds$, with $\alpha>0$, then $\int_0^t y_{t-s} \nu_b(ds)=\int_0^t e^{-\alpha(t-s)}y_s ds$. This means that the traits of recent ancestors have a higher contribution in the birth rate of the individual alive at time $t$. Such rates may be useful to model social interactions, for instance cooperative breeding where the ancestors contribute to protect and raise their descendants. When ancestors have advantageous traits, they may help their offspring to reproduce in more favorable conditions and increase their birth rates.\hfill $\Box$
\end{ex}

\noindent \textbf{Death:} To define the death rate, let us consider a bounded continuous
interaction kernel $U\in \Co_b(\R_+\times \D_{\R^d}^2,\R)$, a bounded continuous function $D$ on $\R_+\times \D_{\R^d}$ and a Radon measure $\nu_d$ weighting the influence of the past population on the present individual $y$ at time $t$. The death rate is
$$d^n(t,y,X^n)=  n\,r(t,y)+d(t,y,X^n),$$
where for a process $X\in \D_{\mathcal{M}_F(\D_{\R^d})}$,
\begin{align}
 d(t,y,X)= & D(t,y)+ \int_0^t \int_{\D_{\R^d}} U(t,y,y') X_{t-s}(dy') \nu_d(ds).\label{deathrate2}
 \end{align} The first term with function $r$ allows us to preserve the demographic balance. The  term $D(t,y)$ is the natural death rate, while $U(t,y,y')$ represents the competition exerted at time $t$ on the  individual of lineage $y$, by an individual of lineage $y'$ alive in its past.
We assume that:
\begin{align}
& \exists \bar{D}>0,\, \forall y\in \D,\,\forall t\in \R_+,\,
 0\leq D(t,y)<\bar{D},\nonumber\\
&\exists \underline{U},\,\bar{U}>0,\, \forall y,y'\in \D,\,\forall t\in \R_+,\,
 0<U(t,y,y')<\bar{U}.\label{hyptaux}
 \end{align}

\begin{ex}\label{ex2.2}Model with asymmetrical competition:\\
 If we choose $D=0$, $\nu_d(ds)=\delta_0(ds)$ and the asymmetric competition kernel proposed by Kisdi \cite{kisdi}
\begin{equation}
U(t,y,y')= \frac{2}{K} \Big(1-\frac{1}{1+\alpha e^{-\beta(y_t-y'_t)}}\Big), \label{noyaukisdi}
\end{equation}then, the death rate becomes:
\begin{equation}
 d(t,y,X)=\frac{2}{K} \int_{\D_{\R^d}}\Big(1-\frac{1}{1+\alpha e^{-\beta(y_t-y'_t)}}\Big)X_t(dy'). \label{tauxmortkisdi}
\end{equation}
\end{ex}

\begin{rque}\label{rque2.3} A generalization to physical age structure as considered in \cite{meleardtransuperage} is possible, provided we extend the trait space by giving a color to each individual. The colors are independent uniform $[0,1]$-valued random variables  drawn at each birth. The lineage of colors of an individual is a path $c\in \mathbb{D}(\R_+,[0,1])$ and allows the definition of the
 birth date of an individual  alive at time $t$,
\begin{eqnarray}
\tau_{c,t} & =  \inf\{s\leq t,\quad c_s= c_{t}\} & = \sup\{s\leq t,\quad c_s\not= c_{t}\}\label{tauct}
\end{eqnarray}
The age of the latter individual at time $t$ is given by
\begin{equation}
a(t):=t-\tau_{c,t}.\label{definitionage}
\end{equation}
It is a càdlàg function, discontinuous at the birth times. \hfill $\Box$
\end{rque}

\subsection{Construction of the historical particle process}


Following the work of Fournier and Méléard \cite{fourniermeleard}, the population process is obtained as a solution of a stochastic differential equation (SDE) driven by a Poisson Point Measure (PPM),  describing  the dynamics of $(X^n_t)_{t\in \R_+}$, for any $n\in \N^*$. The measure representing the population evolves through the occurrences of births and deaths. Since the rates may vary with time,  acceptance-rejection techniques are used to obtain these events' occurrences by mean of PPMs. According to birth and death events,   Dirac masses are added  or removed. This construction provides an exact simulation algorithm that is extensively used in Section \ref{sectionexemples}.

\begin{rque} As in Fournier and Méléard \cite{fourniermeleard},  the map $Y=(Y^i)_{i\in \N^*}$ from $\bigcup_{n\in \N^*}\mathcal{M}^n_P(\mathcal{L})$ in $\mathcal{L}$  is defined for $n$ and $N$ in $\N^*$ by \begin{align*}
Y^{j}\left(\frac{1}{n}\sum_{i=1}^{N} \delta_{y^i}\right)  =  \left\{\begin{array}{cc}y^j, & \mbox{ if }j\leq N\\
0 & \mbox{ otherwise},\end{array}\right.
\end{align*}
where the individuals are sorted by the lexicographical order. This will be useful to extract a particular individual from
the population. When there is no ambiguity, we will write $Y^i$ instead of $Y^i(X)$ for a point measure $X$.
\end{rque}

\bigskip

\begin{definition}\label{defprocmicrosuperp}Let us consider on the probability space $(\Omega,\mathcal{F},\mathbb{P})$:
\begin{enumerate}
\item a random variable $X^n_0\in \mathcal{M}_P^n(\mathbb{D}_{\R^d})$ satisfying $\mathbb{E}\left(\langle X^n_0,1\rangle \right)<+\infty$ and such that the support of $X^n_0$ contains a.s. only constant functions,
\item a PPM $Q(ds,di,dh,d\theta)$ on $\R_+\times E:=\R_+\times \N^*\times \R^d \times \R_+$ with intensity measure $ds\otimes n(di)\otimes dh\otimes  d\theta$ and independent from $X^n_0$, $n(di)$ being the counting measure on $\N^*$ and $ds$, $dh$ and $d\theta$ the Lebesgue measures on $\R_+$, $\R^d$ and $\R_+$ respectively.
\end{enumerate}
We denote by $(\mathcal{F}^n_t)_{t\in \R_+}$ the canonical filtration associated with $X^n_0$ and $Q$, and consider the following SDE with values in $\mathcal{M}^n_P(\mathbb{D}_{\R^d})$:
\begin{align}
 X^n_t   =     X^n_0+ & \frac{1}{n}\int_0^t \int_{E} \mathbf{1}_{\{i\leq n \langle X^n_{s_-},1\rangle\}}\left[ \delta_{(Y^i|s|Y^i_{s_-}+h)}\ind_{\theta\leq m^n_1(i,s,h)}+\delta_{Y^i}\ind_{m^n_1(i,s,h)<\theta\leq m^n_2(i,s,h)}\right.\nonumber\\
 - & \left.\delta_{Y^i}\ind_{m^n_2(i,s,h)<\theta\leq m^n_3(i,s,h,X^{n,s_-})}\right] Q(ds, di,dh,d\theta),
\label{descriptionalgosuperprenormalisen}
\end{align}
where 
\begin{align}
& m^n_1(i,s,h)  =  p\,b^n(s,Y^i(X^n_{{s_-}}))k^n(h),\nonumber\\
& m^n_2(i,s,h)  =  m^n_1(i,s,h)+(1-p)\,b^n(s,Y^i(X^n_{s_-}))k^n(h)\nonumber\\
& m^n_3(i,s,h,X^{n,s_-}) =  m^n_2(i,s,h)+ d^n(s,Y^i(X^n_{s_-}),X^{n,s_-})k^n(h).\label{defm1m2n}
\end{align}
\hfill $\Box$
\end{definition}



\noindent Existence and uniqueness of the solution $(X^n_t)_{t\in \R_+}$ of SDE \eqref{descriptionalgosuperprenormalisen} are obtained for every $n\in \N^*$, from a direct adaptation of  \cite{fourniermeleard}, as well as the next proposition concerning moment estimates and martingale properties.

\begin{prop}\label{prop_usefulresults} Let us assume  that
\begin{align}
\label{uniform}
& \sup_{n\in \N^*}\mathbb{E}\left(\langle X^n_0,1\rangle^3\right)<+\infty.
\end{align}
Then,\\
\noindent (i)
For all $T>0$,
\begin{align}
\sup_{n\in \N^*}\E\Big(\sup_{t\in [0,T]}\langle X^n_t,1\rangle^3\Big)<+\infty.\label{estimee_momentp}
\end{align}
(ii) For a bounded and measurable function $\varphi$,
\begin{align}
\langle X^n_t,\varphi\rangle
=    \langle X^n_0,\varphi\rangle + &  M^{n,\varphi}_t
+    \int_0^t ds \int_{\D_{\R^d}}  X^n_s(dy)   \Big[ \nonumber\\
 & n r(s,y)\Big(\int_{\R^d} \varphi(y|s|y_s+h) K^n(y_s,dh)-\varphi(y)\Big) \nonumber\\
+&  b(s,y) \int_{\R^d} \varphi(y|s|y_s+h)K^n(y_s,dh) -d(s,y,(X^n)^s)\varphi(y)\Big]\label{PBM}
\end{align}where $M^{n,\varphi}$ is a square integrable martingale starting from 0 with quadratic variation
\begin{multline}
\langle M^{n,\varphi}\rangle_t=  \frac{1}{n}\int_0^t \int_{\D_{\R^d}}  \Big[
\big(n r(s,y)+b(s,y)\big) \int_{\R^d} \varphi^2(y|s|y_s+h) K^n(y_s,dh) \\
+   \big(n r(s,y)+d(s,y,(X^n)^s)\big)\varphi^2(y)\Big]X^n_s(dy)\,ds.\label{crochetPBM}
\end{multline}
\end{prop}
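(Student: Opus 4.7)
The plan is to derive both statements from the Poisson--Itô calculus applied to the SDE \eqref{descriptionalgosuperprenormalisen}. The key structural observation is that the large allometric rate $nr(s,y)$ enters symmetrically into the birth and the death terms, so that its contribution cancels cleanly in the mass dynamics and folds naturally into the measure $K^n$ in the drift of the martingale decomposition; this symmetry is the only reason a uniform-in-$n$ moment bound can hold at all.

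For (ii), I would test \eqref{descriptionalgosuperprenormalisen} against $\varphi$ and split the $Q$-integral into its predictable compensator plus a compensated martingale $M^{n,\varphi}$. Integrating out $\theta$ against the thresholds $m^n_1,m^n_2,m^n_3$ produces the rates $p\, b^n k^n(h)$, $(1-p)\,b^n k^n(h)$ and $d^n k^n(h)$; integrating $n(di)$ against $\mathbf{1}_{i\le n\langle X^n_{s_-},1\rangle}$ and absorbing the $1/n$ prefactor rewrites the sum over $i$ as an integral against $X^n_s(dy)$. The clone contribution $(1-p)b^n(s,y)\varphi(y)$ can be rewritten as $(1-p)b^n(s,y)\varphi(y|s|y_s)$ since paths in the support of $X^n_s$ are already stopped at $s$; together with the mutant term it factorizes as $b^n(s,y)\int \varphi(y|s|y_s+h)K^n(dh)$, and splitting $b^n=nr+b$ and $d^n=nr+d$ reproduces exactly \eqref{PBM}. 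The bracket \eqref{crochetPBM} is obtained identically, by squaring the jump of $\langle X^n,\varphi\rangle$ (of order $\varphi/n$) and integrating this square against the same compensator: the $1/n^2$ combined with the cardinality $n\langle X^n,1\rangle$ of the sum over $i$ produces the displayed $1/n$ prefactor. Square-integrability of $M^{n,\varphi}$, hence the martingale property, follows from (i) with exponent $2$ once (i) is available.

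For (i) I take $\varphi\equiv 1$ in (ii) and use $\int K^n(dh)=1$ to cancel the $nr$ term in the drift, reducing the dynamics of $Z^n_s:=\langle X^n_s,1\rangle$ to a pure jump process with jumps $\pm 1/n$ and drift bounded above by $\bar{B}\, Z^n_s$ (since $b\le \bar B$ and $d\ge 0$). To upgrade this to a third-moment estimate, I apply the Itô formula for jump processes to $z\mapsto z^3$: the expansion $(z\pm 1/n)^3-z^3=\pm 3z^2/n+3z/n^2\pm 1/n^3$ has its leading terms $\pm 3z^2/n$ cancel between the $nr$-birth and $nr$-death pieces (whose rates share the common value $n^2\langle X^n,r\rangle$), leaving a drift bounded by $C\bigl(1+(Z^n_s)^3\bigr)$ uniformly in $n$. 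Simultaneously, \eqref{crochetPBM} with $\varphi\equiv 1$ yields $\langle M^{n,1}\rangle_t\le C\int_0^t Z^n_s\,ds$ up to $O(1/n)$ corrections, after the same cancellation. Combining this drift bound, Burkholder--Davis--Gundy applied to $M^{n,1}$, and Jensen's inequality for the time integral, one obtains for $\psi^n(t):=\E\bigl[\sup_{s\le t}(Z^n_s)^3\bigr]$ an integral inequality of the form $\psi^n(t)\le C_T\bigl(1+\int_0^t\psi^n(s)\,ds\bigr)$ uniformly in $n$, and Gronwall's lemma closes the estimate using hypothesis \eqref{uniform}.

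The main obstacle is the cancellation of the $O(n)$ terms itself: without the symmetric appearance of $nr(s,y)$ in birth and death, both the drift of $(Z^n_t)^3$ and the bracket $\langle M^{n,1}\rangle_t$ would blow up linearly in $n$. Carrying this out rigorously forces the cubic expansion to third order and requires the birth and death rates to be grouped by their $nr$ content before taking expectations. A secondary issue is that the death rate is nonlinear in $X^n$ through the competition term $\int U(t,y,y')X^n_{s-u}(dy')\nu_d(du)$, which is only bounded by $\bar U\,\nu_d([0,T])\sup_{u\le s}\langle X^n_u,1\rangle$; this self-reference is precisely why the Gronwall argument has to be closed on the time-supremum $\psi^n(t)$ rather than on the pointwise expectation $\E[(Z^n_t)^3]$. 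Existence, uniqueness, and measurability issues for \eqref{descriptionalgosuperprenormalisen} are taken as granted from the direct adaptation of Fournier--Méléard \cite{fourniermeleard} mentioned in the text.
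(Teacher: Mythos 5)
Your proposal is correct and matches the intended argument: the paper gives no proof of this proposition, delegating it to a ``direct adaptation'' of Fournier--M\'el\'eard \cite{fourniermeleard}, and your sketch is precisely that adaptation, with the right identification of the one genuinely new point --- the exact cancellation of the $O(n)$ allometric terms between the $nr$-birth and $nr$-death contributions, without which neither the drift of $\langle X^n_t,1\rangle^3$ nor the bracket $\langle M^{n,1}\rangle$ would be bounded uniformly in $n$ (in particular, the crude domination by a pure birth process used in \cite{fourniermeleard} fails here). Two minor points: the Gronwall step should be run on the processes stopped at $\tau_N=\inf\{t:\langle X^n_t,1\rangle\geq N\}$ before letting $N\to\infty$, since the third moments are not known to be finite a priori; and you should commit to one of the two routes you interleave --- either cube the decomposition $\langle X^n_t,1\rangle=\langle X^n_0,1\rangle+A^n_t+M^{n,1}_t$ and apply Burkholder--Davis--Gundy to $M^{n,1}$ (in which case the cubic It\^o expansion is not needed), or apply It\^o to $z^3$ (in which case you must also control the supremum of the resulting compensated jump martingale, not that of $M^{n,1}$); either closes the estimate.
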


\subsection{Examples}\label{sectionexemples}

In this section, we give two examples of applications of historical processes in biology.



\subsubsection{A model with competition for resources}\label{section:example1}

We first introduce a model of adaptation with competition for resources that has been considered by Roughgarden \cite{roughgarden}, Dieckmann and Doebeli \cite{dieckmanndoebeli}, Champagnat and Méléard \cite{champagnatmeleard2011}. In this model, the trait $x\in [0,4]$ can be thought as being a (beak) size. The birth and death rates are chosen as
\begin{align*}
r(t,y)=1,\qquad b(t,y)=\exp\Big(-\frac{(y_t-2)^2}{2\sigma^2_b}\Big),\qquad d(t,y,X)=\int_{\D} \exp\Big(-\frac{(y_t-y'_t)^2}{2\sigma_U^2}\Big)X(dy').
\end{align*}
Here, there is no dependence on the past and $\gamma(s,y,\bar{X}^s)=\gamma(y_s,\bar{X}_s)$. Non-historical superprocess renormalizations in such models have been investigated for instance by Champagnat et al. \cite{champagnatferrieremeleard_stochmodels}.
 The birth rate is maximal at $x^*=2$ and there is a local competition with neighbors of close traits. A flat  competition kernel  ($\sigma_U=+\infty$) will make evolution  favoring  individuals with maximal growth rate $x^*$. For $\sigma_U<+\infty$, Champagnat and Méléard \cite{champagnatmeleard2011} proved that, depending whether $\sigma_b<\sigma_U$ or $\sigma_b>\sigma_U$, either evolution concentrates on individuals with trait $2$ or favours several groups concentrated around different trait values.

\begin{figure}[!ht]
\begin{center}
\begin{tabular}{ccc}
(a) & \includegraphics[width=5.5cm,angle=270]{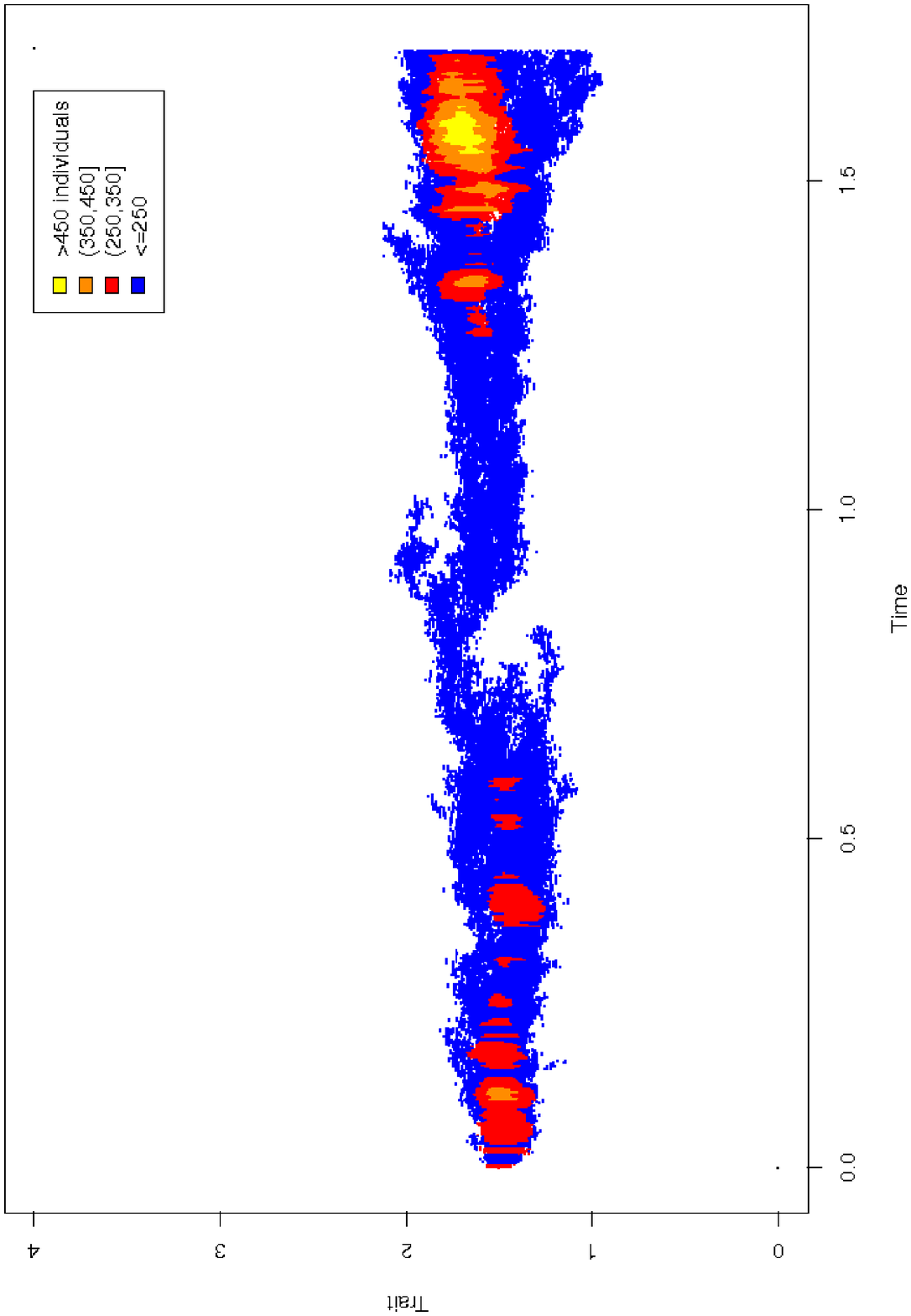} &     \includegraphics[width=5.5cm,angle=270]{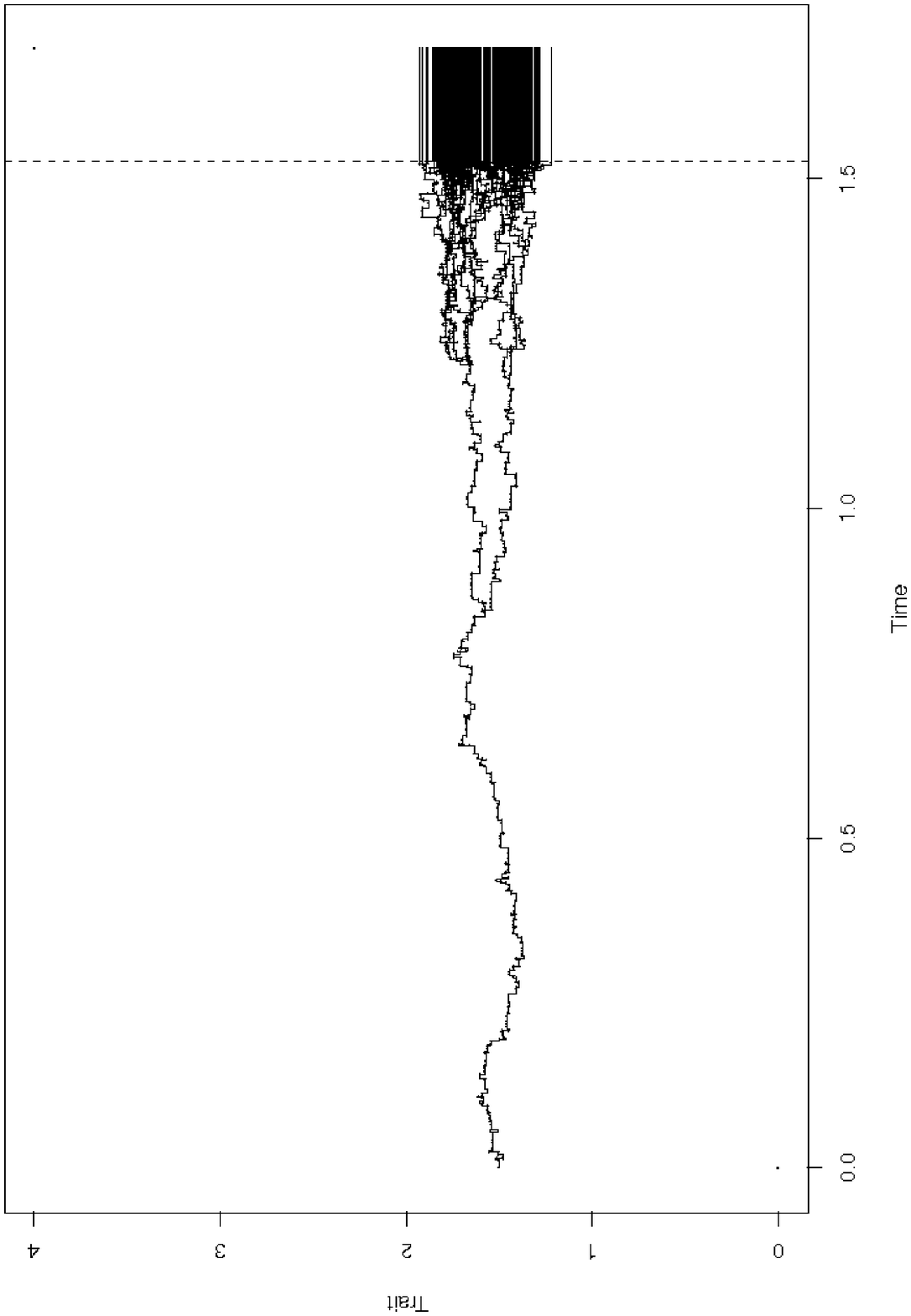}\\
(b) & \includegraphics[width=5.5cm,angle=270]{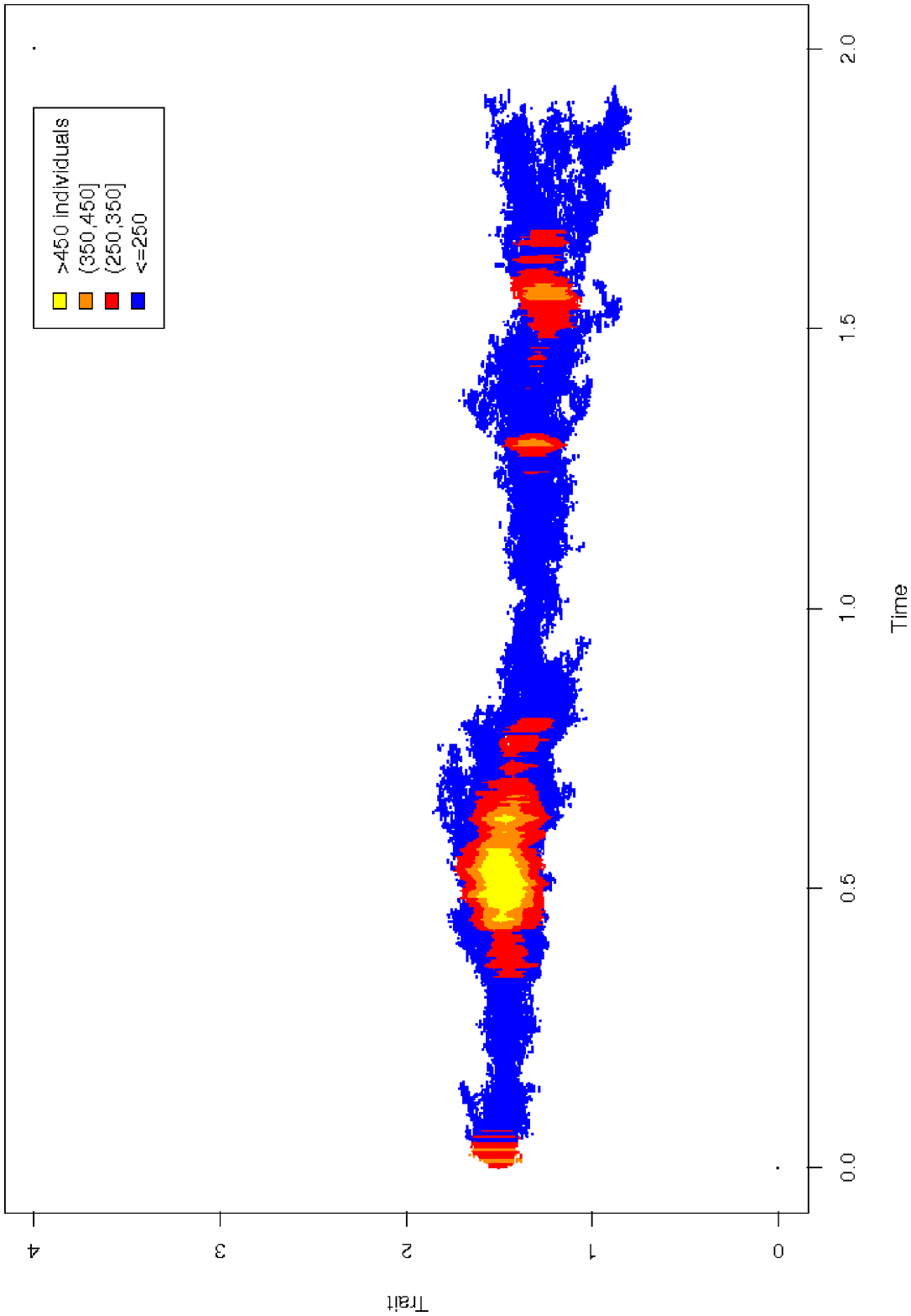} &     \includegraphics[width=5.5cm,angle=270]{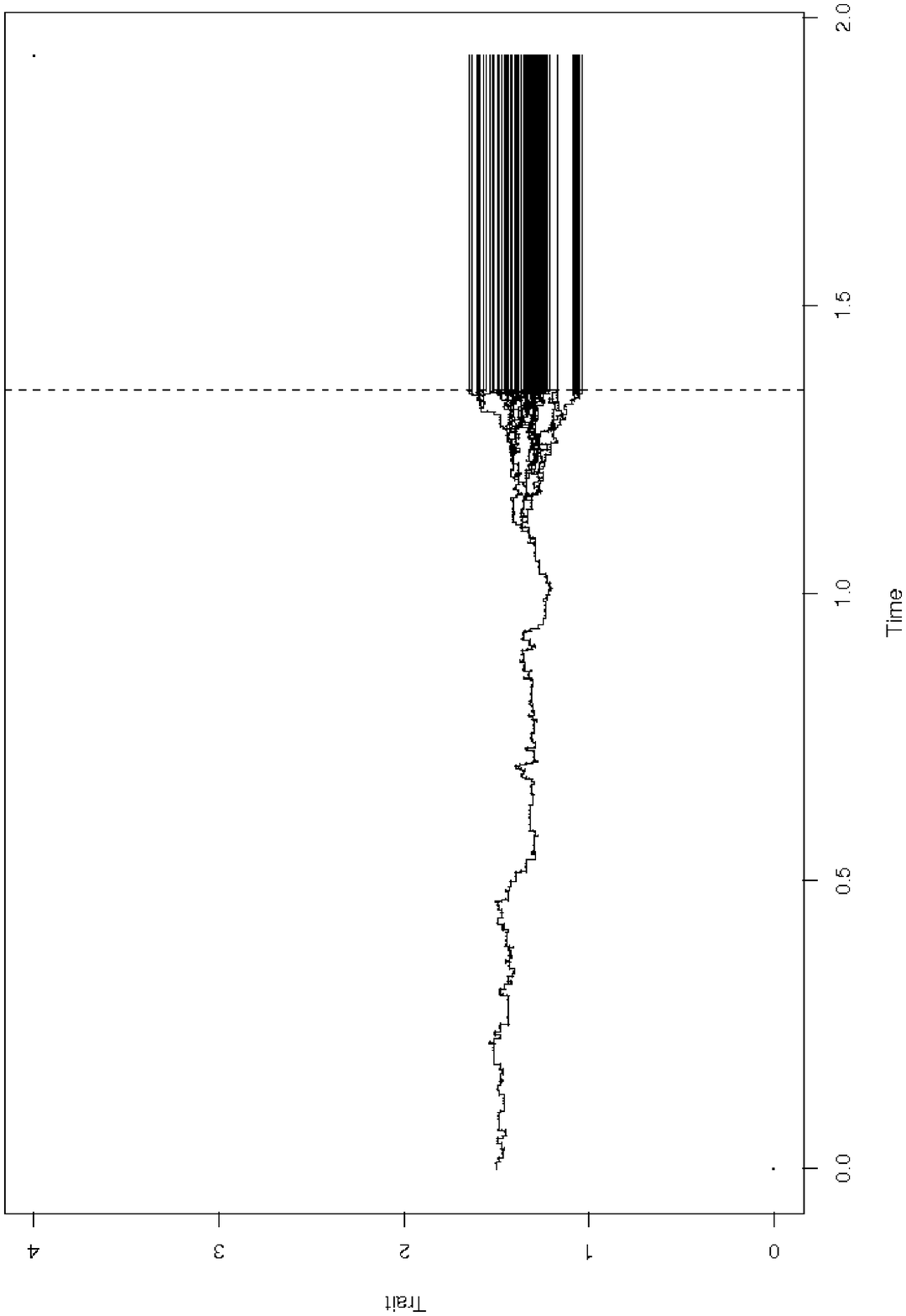}\\
     \end{tabular}
\caption{{\small \textit{Dieckmann-Doebeli model. $\sigma=0.4$, $\sigma_b=0.4$, $x_0=4$, $p=0.5$, $n=300$. The 300 particles are started with the trait $1.5$. (a) $\sigma_U=0.3$ ; (b) 
$\sigma_U=0.45$. When $\sigma_b>\sigma_U$, we observe in the historical superprocess a separation of the population into subgroups concentrated around different trait values and attached to the MRCA by a single branch. This genealogical separation of the population into a small number of families (2 in (a)) cannot be seen on the superprocesses (left column). When $\sigma_b<\sigma_U$, then the families are less distinct and the population has a shorter persistence. }}}\label{DD1}
\end{center}
\end{figure}

\medskip
\noindent In Figure \ref{DD1}, we represent the dynamics of the population process including the extinct branches and the historical particle process. These two representations complement each other. The population process gives the evolution of the number density of individuals, while the historical particle process reveals the evolving phylogenies and provides the ancestral paths of living particles. The simulations are obtained for a large $n$ and show two different behaviors. It is known that the limit of the population process is a superprocess (left pictures). Our aim is to understand the limit of the historical particle process (right pictures), i.e. the nonlinear historical superprocess.

In the superprocess, the individual dimension is lost and ancestral paths can not be read from the sole information of the support of the measure. The latter represents the dynamics of the population's biodiversity. The interest of the historical superprocess is to provide genealogical information. Indeed, one can stress that the historical process at time $t$ restricted to paths up to time $t-\varepsilon$ has finite support, as observed in Figure \ref{DD1}. This result is known for the historical super Brownian motion \cite[Section 3]{dawsonperkinsAMS}.


In the simulations of Figure \ref{DD1}, we see that when $\sigma_b>\sigma_U$, two families appear, in the sense that in the genealogical tree (on the right) the most recent common ancestor (MRCA) is separated from the next branching events by a number of generations of order $n$. Intuitively, the range of the competition kernel being smaller than the one of the mutations, the population fragments into small patches that have few interactions with each other.
When $\sigma_b<\sigma_U$, the distinction of subfamilies in the genealogical tree is not so clear. The competition kernel has a large range and prevents the new mutants from escaping the competition created by the other individuals in the population.

\subsubsection{A variant of Adler's fattened goats: a spatial model}\label{section:example2}

In many models, trait or space play a similar role. Spatial models have been extensively studied as toy models for
evolution (see Bolker and Pacala \cite{bolkerpacala, bolkerpacalaAN} or Dieckmann and Law \cite{dieckmannlaw2000}).
Here we consider a spatial model where the competition exerted by past ancestors is softened. This model is a variant of Adler's fattened goats (\eg \cite{adlertribe,perkinsAMS}), but with an interaction  in the drift term and not in the growth rate term). It corresponds to the choice of $D=0$, $\nu_d(ds)=e^{-\alpha s}ds$ with $\alpha>0$, $r(t,y)=1$, $b(t,y)=b$ and
\begin{align}
d(t,y,X)=\int_0^t \int_{\R^d}
 \frac{K_\varepsilon\big(y'(s)-y(t)\big)}{K} X_s(dy',dc')\,e^{-\alpha(t-s)}ds,\label{ex1}
\end{align}where, $K_\varepsilon$ is a symmetric smooth kernel with maximum at $0$, for instance the density function of a centered Gaussian distribution with variance $\varepsilon$. The death rate corresponds to the choice of $U(t,y,y')=K_\varepsilon\big(y_t-y'_t\big).$ From the definition of the processes (see \eqref{deftrajectoire}), if $y'$ belongs to the support of $X_s(dy')$ then almost surely (a.s.) $y'$ is a path stopped at $s$ and for all $t\geq s$, $y'_t=y'_s$.

The goat-like particles consume resources at their location. When they arrive in a region previously grazed by the population, their death rate  increases. The
parameter $\alpha$ describes the speed at which the environment replenishes itself. The kernel $K_\varepsilon$ is
 the density function of a centered Gaussian distribution with variance $\varepsilon$. The parameter $K$
 scales the  carrying capacity and controls the mass of $\bar{X}_s$.


\begin{figure}[!ht]
\begin{center}
\begin{tabular}{cc}
  \includegraphics[width=5.5cm,angle=270]{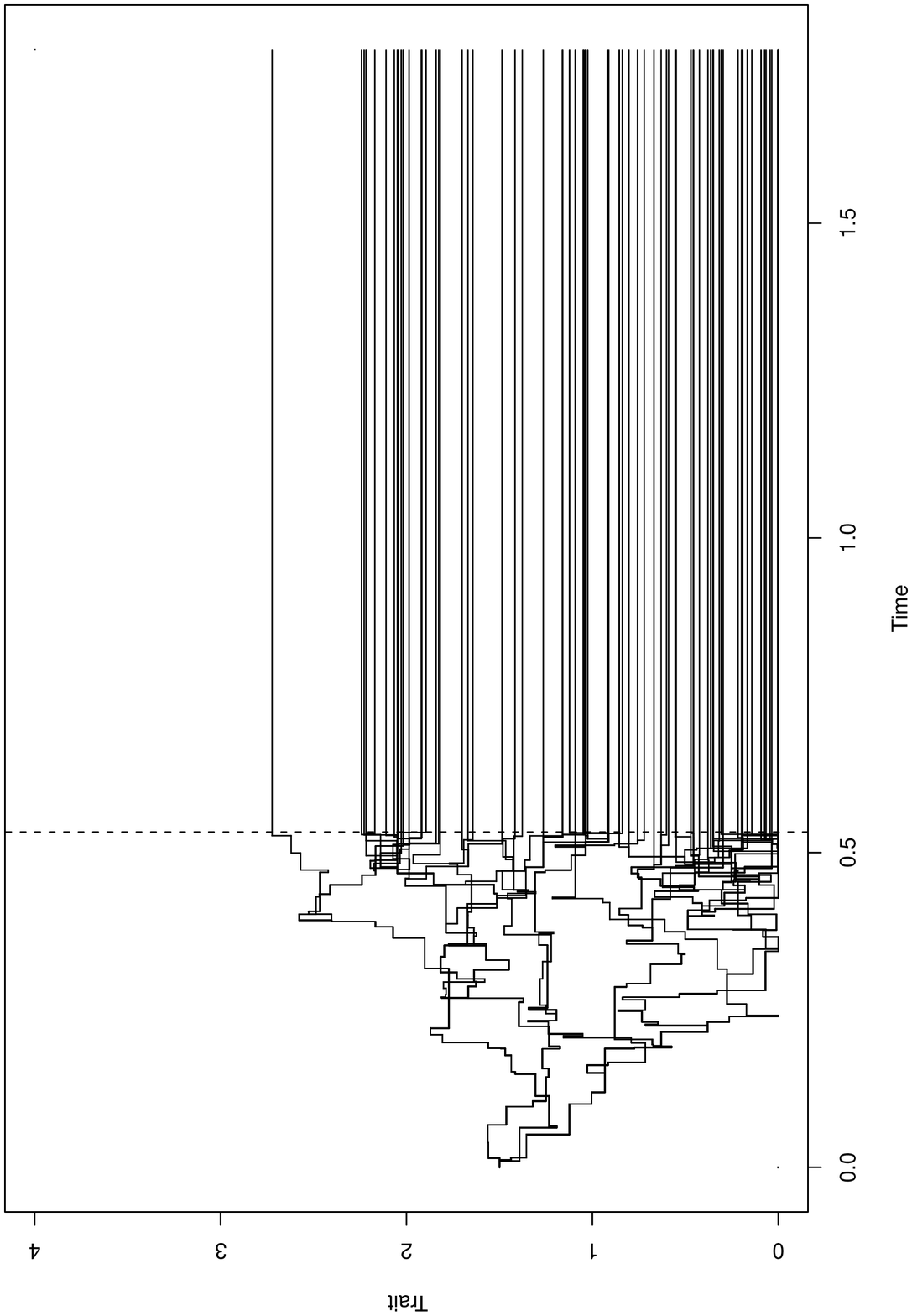} &   \includegraphics[width=5.5cm,angle=270]{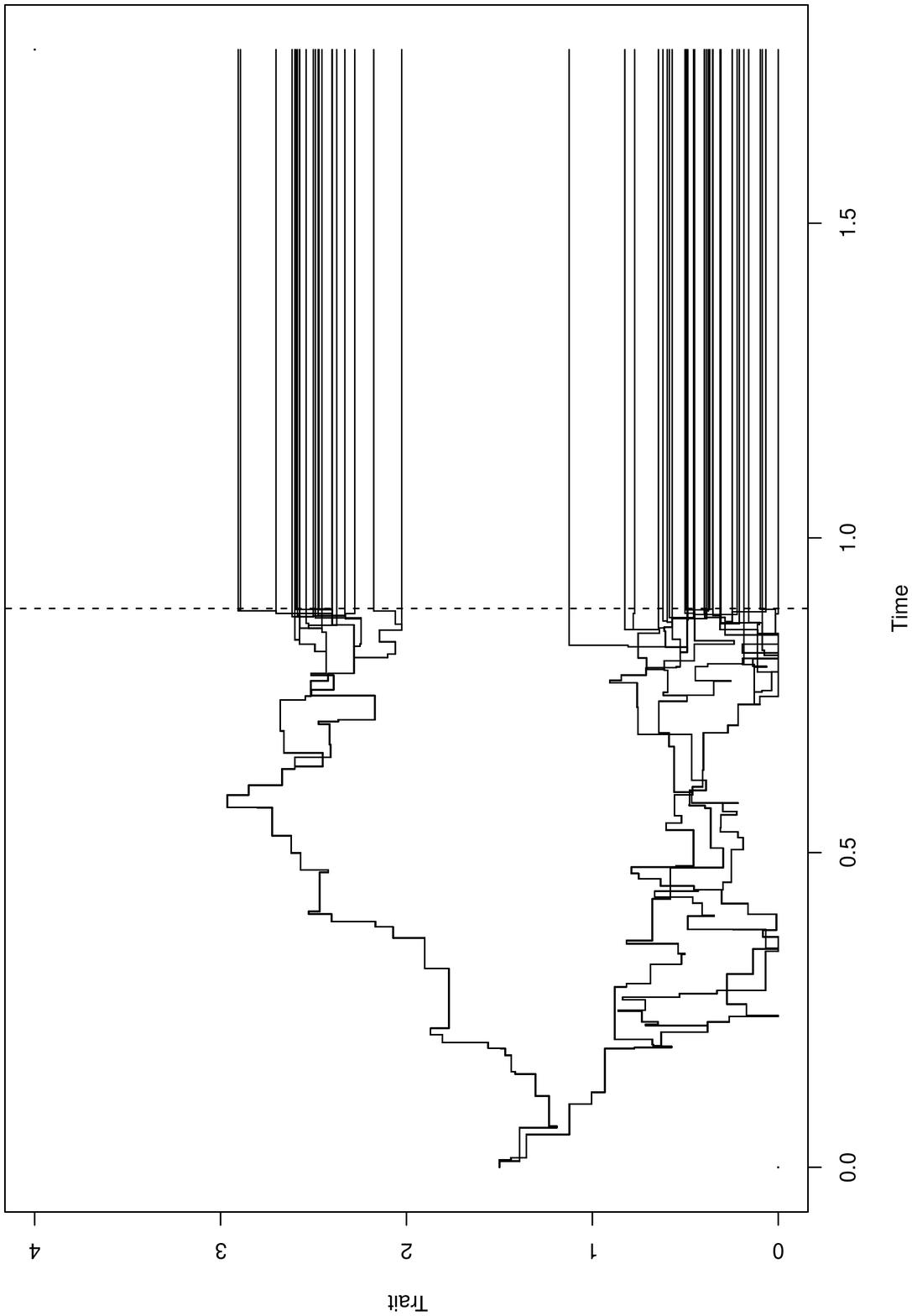}
     \end{tabular}
     \caption{{\small \textit{Simulation for Adler's fattened goats. $\alpha=10$, $\varepsilon=0.8$, $b=0.75$, $\sigma=1$, $p=1$, $K=50$ and $n=50$. The 100 initial particles are started at location $1.5$.}}}\label{figadler}
\end{center}
\end{figure}

\noindent Due to the form of the interaction,  the goats spread quickly in the whole space and separate into families with very old MRCAs, as observed in Fig. \ref{figadler} . The families become quickly disjoint and geographically isolated.

\section{The nonlinear  historical superprocess limit}

We now investigate a diffusive limit for the sequence of processes $X^n$ defined by (\ref{descriptionalgosuperprenormalisen}).
Let us firstly introduce a class of test functions which will be used to define the limiting process.
\begin{definition}\label{fonction-test:def}
For  real $\Co^2_{b}$-functions $g$ on $\R_+\times \R^d$ and  $G$ on $\R$ respectively, we define the continuous function $G_g$ on the path space $ \D_{\R^d}$ by
\begin{align}
\label{testfunction}
G_g(y) = G\Big(\int_{0}^T g(s,y_{s})ds\Big).
\end{align}
\end{definition}
\noindent Let us remark that the class generated by finite linear combinations of such functions is stable under addition and separates the points, as proven in Lemma \ref{lemme-passage1} in Appendix \ref{appendixA}.
Notice that if $y$  is a càdlàg path stopped at $t\in [0,T]$ then
\begin{align*}
G_g(y)=  G\Big(\int_{0}^t g(s,y_{s})ds + \int_t^T g(s,y_t)ds\Big).
\end{align*}Also, in the sequel, the following quantity will appear for $t\in [0,T]$ and $y\in \D_{\R^d}$:
\begin{multline}
\mathcal{D}^2G_g(t,y)= G'\Big(\int_0^T g(s,y_s)ds\Big)\int_t^T \Delta_x g(s,y_t)ds\\+ G''\Big(\int_0^T g(s,y_s)ds\Big)\sum_{i=1}^d \Big(\int_t^T \partial_{x_i} g(s,y_t)ds\Big)^2.\label{defD2}
\end{multline}This quantity generalizes the Laplacian operator to an infinite-dimensional setting. For instance, if $G(x)=x$ and if $g(s,y)=g(y)$ does not depend on time, we get
$\mathcal{D}^2G_g(t,y)=(T-t)\Delta g(y_t)$.

\bigskip \noindent
Note that   Dawson (\cite{dawson}, p. 203) and Etheridge (\cite{etheridgebook}, p. 24) introduce another class of test functions of the form
\begin{equation}\label{testfunction:dawson}
\varphi(y)=\prod_{j=1}^m g_j(y_{t_j}),\end{equation}for $m\in \N^*$, $0\leq t_1<\cdots <t_m$ and $\forall j\in \{ 1, \dots,m\},\, g_j\in \Co^{2}_b(\R^{d},\R)$.
This class is not convenient when dealing with càdlàg processes since the function $\varphi$ is  not continuous for the Skorohod topology. However,  these test functions will be used when $y$ is a continuous path. \\
\noindent If $y$  is a continuous path stopped at $t$ then $\varphi(y)=\prod_{j=1}^m g_j(y_{t_j\wedge t})$.
  For  a path $y\in \Co(\R_+,\R^d)$, a time $t>0$, we define
\begin{equation}
\label{laplacien}
\widetilde{\Delta}\varphi(t,y)=  \sum_{k=0}^{m-1}\ind_{[t_k,t_{k+1}[}(t)\Big(\prod_{j=1}^k g_j(y_{t_j}) \Delta\big(\prod_{j=k+1}^m g_j\big)(y_t)\Big),
\end{equation}
where $t_{0}=0$.

\noindent The following lemma links the test functions \eqref{testfunction} and \eqref{testfunction:dawson}. It is proved in Appendix \ref{appendixA}

\begin{lemme}\label{lien:fonctionstest}
Let $\varphi$ be a test function of the form \eqref{testfunction:dawson}. Then there exists test functions $(\varphi_q)$ of the form \eqref{testfunction} such that for every $y\in \Co(\R_+,\R^d)$ and $t\in [0,T]$, the sequences $(\varphi_q(y))_{q\in \N^*}$ and $(\mathcal{D}^2\varphi_{q}(t,y))_{q\in \N^*}$ are bounded uniformly in $q$, $t$ and $y$ and converge respectively to $\varphi(y)$ and $\widetilde{\Delta}\varphi(t,y)$.
\end{lemme}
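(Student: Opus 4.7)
The plan is to replace point evaluations $g_j(y_{t_j})$ by smoothed integrals $\int_0^T \rho_q(s-t_j)g_j(y_s)\,ds$ and to encode the product $\prod_j g_j(y_{t_j})$ in the single-integral form $G(\int_0^T g(s,y_s)\,ds)$ via the identity $\prod f_j = \exp(\sum \log f_j)$. Since $\log$ requires positivity, I first shift by a constant $C > \max_j \|g_j\|_\infty$, setting $h_j := g_j + C$ (so that $\log h_j \in \Co_b^2(\R^d)$), and expand
\[
\varphi(y) = \prod_{j=1}^m (h_j(y_{t_j}) - C) = \sum_{A\subseteq\{1,\ldots,m\}}(-C)^{|A^c|}\prod_{j\in A}h_j(y_{t_j}).
\]
Let $\rho_q \ge 0$ be a smooth \emph{causal} mollifier with $\supp\rho_q \subseteq [-1/q,0]$ and $\int\rho_q = 1$. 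For each $A$, define the single test function of type \eqref{testfunction},
\[
\varphi_q^A(y) := G\Bigl(\int_0^T g_q^A(s,y_s)\,ds\Bigr),\qquad g_q^A(s,x) := \sum_{j\in A}\rho_q(s-t_j)\log h_j(x),
\]
with $G \in \Co_b^2(\R)$ agreeing with $\exp$ on the (bounded) range of the integrand, and set $\varphi_q := \sum_A (-C)^{|A^c|}\varphi_q^A$, a finite linear combination in the algebra generated by \eqref{testfunction}.

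\textbf{Verifications.} Uniform bounds in $q,t,y$ for $\varphi_q$ and $\mathcal{D}^2\varphi_q$ follow at once from the boundedness of $\log h_j$, its first two spatial derivatives, and the $\Co_b^2$-norms of $G$. For $y \in \Co(\R_+,\R^d)$, the classical mollifier argument gives $\int_0^T\rho_q(s-t_j)\log h_j(y_s)\,ds \to \log h_j(y_{t_j})$, whence $\varphi_q^A(y)\to\prod_{j\in A}h_j(y_{t_j})$; summing over $A$ yields $\varphi_q(y)\to\varphi(y)$. For the convergence $\mathcal{D}^2\varphi_q(t,y)\to\widetilde\Delta\varphi(t,y)$ at $t\in[t_k,t_{k+1})$, I apply \eqref{defD2} to each $\varphi_q^A$: the decisive scalar integrals $\int_t^T\rho_q(s-t_j)\,ds$ equal $0$ for $t_j\le t_k$ (since the support of $\rho_q(\cdot-t_j)$ lies strictly below $t$) and $1$ for $t_j\ge t_{k+1}$, once $q$ is large. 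Using the identity $\Delta(\prod f_i) = (\prod f_i)\bigl[\sum\Delta\log f_i + |\sum\nabla\log f_i|^2\bigr]$ valid for positive $f_i$, and interpreting $y$ as stopped at $t$ (the setting in which the historical process lives, so that $h_j(y_{t_j}) = h_j(y_t)$ for $j>k$), one obtains
\[
\mathcal{D}^2\varphi_q^A(t,y) \longrightarrow \prod_{j\in A,\,j\le k}h_j(y_{t_j})\cdot\Delta\Bigl(\prod_{j\in A,\,j>k}h_j\Bigr)(y_t).
\]
Splitting $A = A_1\sqcup A_2$ with $A_1\subseteq\{1,\ldots,k\}$, $A_2\subseteq\{k+1,\ldots,m\}$, and telescoping each block via $\sum_{A_i}(-C)^{\cdots}\prod_{j\in A_i}h_j = \prod(h_j - C) = \prod g_j$, the sum collapses to $\prod_{j\le k}g_j(y_{t_j})\cdot\Delta\prod_{j>k}g_j(y_t) = \widetilde\Delta\varphi(t,y)$.

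\textbf{Main obstacle.} The delicate point is the value at the boundary $t = t_k$: the indicator $\ind_{[t_k,t_{k+1})}$ in the definition of $\widetilde\Delta\varphi$ forces the $g_k$ factor fully onto the ``left'' product, so the $j=k$ contribution in $\int_t^T\rho_q(s-t_j)\,ds$ must vanish there. A symmetric mollifier would produce $1/2$ and destroy the identification; the causal choice $\supp\rho_q\subseteq[-1/q,0]$ is exactly what is needed to place $g_k$ in the left product. The positivity requirement of the logarithm is the other technical subtlety, dispatched by the shift-and-expand procedure at the cost of a finite linear combination rather than a single $G_g$.
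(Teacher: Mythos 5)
Your proof is correct and follows the same core strategy as the paper's: turn the product $\prod_j g_j(y_{t_j})$ into a single integral functional via $\exp\big(\sum_j \log g_j\big)$ and smear each evaluation time $t_j$ with an approximate identity, so that $\varphi_q=G_{g_q}$ with $g_q(s,x)=\sum_j \rho_q(t_j-s)\log g_j(x)$; the paper takes $\rho_q=k^q$ Gaussian and invokes Lemma \ref{lemme-passage1}. Where you differ is in the execution, and on two points your version is tighter than the paper's. First, the paper simply declares ``without restriction the $g_j$ are positive'' and uses $G=\exp$, which is not in $\Co^2_b(\R)$; your shift $h_j=g_j+C$, the multilinear expansion over subsets $A$, and the truncation of $\exp$ outside the bounded range of the integrand make this reduction explicit, at the harmless cost of producing a finite linear combination of functions of the form \eqref{testfunction} rather than a single one (which suffices for the application, where the class is only used through linear martingale problems). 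Second, and more substantively, your causal mollifier with $\supp \rho_q\subseteq[-1/q,0]$ makes $\int_t^T\rho_q(s-t_j)\,ds$ vanish exactly at $t=t_j$, so that $\mathcal{D}^2\varphi_q(t,y)$ converges to $\widetilde{\Delta}\varphi(t,y)$ as defined in \eqref{laplacien} at \emph{every} $t$, including the jump times; the paper's symmetric Gaussian assigns weight $1/2$ to the $j$-th term when $t=t_j$, so its stated limit $\sum_{j\,:\,t_j>t}$ is only correct for $t\notin\{t_1,\dots,t_m\}$ (harmless in the end, since $\widetilde{\Delta}\varphi(s,\cdot)$ only ever appears integrated in $ds$, but your construction proves the lemma as literally stated). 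Two caveats you share with the paper: the identification of the limit of $\mathcal{D}^2\varphi_q(t,y)$ with $\widetilde{\Delta}\varphi(t,y)$ uses $g_j(y_{t_j})=g_j(y_t)$ for $t_j>t$, i.e.\ it holds for paths stopped at $t$ (you say so explicitly, the paper leaves it implicit); and both constructions degenerate when $t_1=0$, where your one-sided kernel places all of its mass outside $[0,T]$ and the Gaussian places half of it there, so one should either assume $t_1>0$ or nudge the kernel's support inside $[0,T]$.
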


\subsection{Main convergence result}

Let us assume that the initial conditions converge:
\begin{align}
\label{converge} \exists X_0\in \mathcal{M}_F(\mathbb{D}_{\R^d}),\, \lim_{n\rightarrow +\infty} X^n_0 = \bar X_0 \mbox{ for the weak convergence}.
\end{align}

\noindent The main theorem of this section states the convergence of the sequence $(X^n_t)_{n\in \N^*}$:
\begin{theorem}\label{propconvergence} Assume \eqref{converge} and \eqref{uniform}. Then the sequence $(X^n)_{n\in\N^*}$ converges in law in $\D(\R_+,\mathcal{M}_F(\D_{\R^d}))$
to the superprocess $\bar{X}\in \Co(\R_+,\mathcal{M}_F(\D_{\R^d}))$ characterized as follows, for test functions $G_g$ of the form  (\ref{testfunction}):\\
\begin{multline}
M^{G_g}_t=  \langle \bar{X}_t,G_g\rangle -\langle \bar{X}_0,G_g\rangle-\int_0^t \int_{\D_{\R^d}} \Big(
p\, r(s,y)\frac{\sigma^2}{2}\ \mathcal{D}^2G_g(s,y)\\
+\gamma(s,y,\bar{X}^s)G_g(y)\Big)\bar{X}_s(dy)\, ds\label{martingalesecondcas}
\end{multline}is a square integrable martingale with quadratic variation:
\begin{align}
\langle M^{G_g}\rangle_t= \int_0^t \int_{\D_{\R^d}}2\,r(s,y) G_g^2(y)\bar{X}_s(dy)\, ds,\label{crochetmartingalesecondcas}
\end{align}where $\mathcal{D}^2G_g(t,y)$ has been defined in \eqref{defD2} and
$\gamma(t,y,\bar{X}^t)$ defines the growth rate of individuals $y$ at time $t$ in the population $\bar{X}$:
\begin{equation}
\gamma(t,y,\bar{X}^t)=b(t,y)-d(t,y,\bar{X}^t).\label{def:g}
\end{equation}
\hfill $\Box$
\end{theorem}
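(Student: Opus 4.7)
I use the classical tightness--identification--uniqueness scheme for measure-valued processes, but with two novelties imposed by this setting: the test function class $G_g$ of Definition \ref{fonction-test:def}, which is continuous on $\D_{\R^d}$ for the Skorohod topology, and a pathwise coupling of $X^n$ with an auxiliary system of independent branching particles used to compensate for the absence of the branching property.

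\textbf{Tightness.} I apply Jakubowski's criterion in $\D(\R_+,\MF(\D_{\R^d}))$, which reduces to (i) tightness of the real processes $\langle X^n,G_g\rangle$ for each $G_g$ of the form \eqref{testfunction}, and (ii) a compact containment condition. Applying Proposition \ref{prop_usefulresults} with $\varphi=G_g$ gives a semi-martingale decomposition whose finite-variation part and quadratic variation are, uniformly in $n$, dominated by constants times $\langle X^n_s,1\rangle$ and $\langle X^n_s,1\rangle^2$; so (i) follows from \eqref{estimee_momentp} and the Aldous--Rebolledo criterion. For (ii) I couple $X^n$ with an immortal branching random walk $\widetilde X^n$ of constant reproduction rate $n\bar R+\bar B$ and with trait increments drawn from $k^n$: the lineages supporting $X^n_t$ are then a subset of those of $\widetilde X^n_t$. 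Diffusive rescaling turns the traits of $\widetilde X^n$ into independent Brownian trajectories, whose uniform modulus-of-continuity estimates supply the compact subsets of $\D_{\R^d}$ needed for (ii), and also force any limit point $\bar X$ to be supported on $\Co(\R_+,\R^d)$.

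\textbf{Identification.} Let $\bar X$ be an accumulation point. The crucial computation is the Taylor expansion of $h\mapsto G_g(y|s|y_s+h)$ at $h=0$. Since for $y$ stopped at $s$ one has $G_g(y)=G\bigl(\int_0^s g(u,y_u)du+\int_s^T g(u,y_s)du\bigr)$, expanding up to order two and integrating against the centred Gaussian $k^n(h)\,dh$ of variance $\sigma^2/n$ gives, the first-order term vanishing by symmetry,
\begin{equation*}
\int\!\bigl(G_g(y|s|y_s+h)-G_g(y)\bigr)\,k^n(h)\,dh \;=\; \tfrac{\sigma^2}{2n}\,\mathcal{D}^2G_g(s,y)+O(n^{-3/2}).
\end{equation*}
Weighted by $nr(s,y)$ and by the mutation probability $p$ in $K^n$, this produces the infinite-dimensional diffusive term $p\,r(s,y)\,\tfrac{\sigma^2}{2}\mathcal{D}^2G_g(s,y)$ of \eqref{martingalesecondcas}. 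The complementary drift $b(s,y)\int G_g(y|s|y_s+h)K^n(dh)-d(s,y,(X^n)^s)G_g(y)$ tends to $\gamma(s,y,\bar X^s)G_g(y)$ by continuity of $b$, $D$ and $U$ and continuity of $G_g$ at continuous paths. In the bracket \eqref{crochetPBM}, the $b$ and $d$ contributions carry a factor $1/n$ and vanish, while the $nr$ terms yield $2r(s,y)G_g^2(y)$, matching \eqref{crochetmartingalesecondcas}.

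\textbf{Uniqueness and main obstacle.} The hardest step is the uniqueness of the martingale problem \eqref{martingalesecondcas}--\eqref{crochetmartingalesecondcas}, as without the branching property one cannot close a log-Laplace equation. My approach is to fix two solutions $\bar X^{(1)},\bar X^{(2)}$ with the same initial data and to introduce, for each $\bar X$, the linear (branching) historical superprocess $\bar Y[\bar X]$ obtained by freezing the interaction at $\bar X$; $\bar Y[\bar X]$ is uniquely characterized by Laplace transforms in the spirit of Perkins \cite{perkinsAMS}, and any solution of the nonlinear problem is a fixed point of the map $\bar X\mapsto\bar Y[\bar X]$. Lipschitz continuity of $R$ and boundedness of $D,U$ from \eqref{hyptaux}, combined with the limiting version of \eqref{estimee_momentp}, give a Gronwall estimate on $\E|\langle \bar X^{(1)}_t-\bar X^{(2)}_t,G_g\rangle|$ via the error term produced by substituting $\bar X^{(2)}$ for $\bar X^{(1)}$ in $\gamma$. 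Since the $G_g$ class separates points (Lemma \ref{lemme-passage1}), this forces $\bar X^{(1)}_t=\bar X^{(2)}_t$ for all $t$. The expected difficulty is to keep this comparison quantitative while handling the infinite-dimensional operator $\mathcal{D}^2$ and the time-integrated dependence of $d$ on the whole past of $\bar X$.
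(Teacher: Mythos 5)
Your overall architecture (tightness, identification of limit points, uniqueness of the martingale problem) matches the paper's, and your identification step --- the Taylor expansion of $h\mapsto G_g(y|s|y_s+h)$ against $K^n(dh)$ producing $p\,r\,\tfrac{\sigma^2}{2}\mathcal{D}^2G_g$, and the $1/n$ decay of the birth/death contributions to the bracket --- is essentially the paper's argument. However, two of your steps have genuine gaps.

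First, the compact containment in Lemma \ref{theoremjakubowskihistorique}(i). Dominating $X^n$ by an independent branching system $\widetilde X^n$ and invoking modulus-of-continuity estimates for the single-particle motions only controls, for a \emph{fixed} time, the expected mass outside a compact set of paths; it does not by itself control the event $\{\exists t\in[0,T]:\ X^n_t(K_T^c)>\varepsilon\}$, which involves a supremum over $t$. A union bound over particles fails (order $n^2$ birth events), and the interaction term $d(t,y,X^n)$ in the death rate destroys any naive submartingale structure for the escaped mass of $X^n$ itself. This is precisely where the paper spends Steps 2--5 of the proof of Proposition \ref{proptightness}: it reduces the sup-over-$t$ event to the fixed-time quantity $\E(X^n_T((K^T)^c))$ via the survival estimate \eqref{etape27}--\eqref{reste_a_prouver}, proved by coupling the escaped sub-population with an auxiliary process $Z^n$ with death rate $nr+\bar D+\bar U N$, passing to a diffusion limit of its mass, and using Girsanov to show this limit stays above $\varepsilon/2$ with probability bounded away from zero. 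Your sketch omits this mechanism entirely, and it is the technical heart of the tightness proof.

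Second, uniqueness. A Gronwall estimate on $\E\abs{\langle \bar X^{(1)}_t-\bar X^{(2)}_t,G_g\rangle}$ is not a workable route to \emph{weak} uniqueness of the martingale problem \eqref{martingalesecondcas}--\eqref{crochetmartingalesecondcas}: the two solutions are laws on $\Co(\R_+,\MF(\D_{\R^d}))$, not processes on a common probability space, so their martingale parts cannot be subtracted and the stochastic fluctuation (whose bracket is of the same order as the drift) is not absorbed by the Lipschitz constant of $\gamma$. Likewise, the map $\bar X\mapsto \bar Y[\bar X]$ acts on laws, and you give no metric in which it is a contraction. The paper's Proposition \ref{prop_unique} instead (a) transfers the problem to the cylindrical test functions \eqref{testfunction:dawson} via Lemma \ref{lien:fonctionstest} (legitimate because the limit is supported on continuous paths), (b) applies the Dawson--Girsanov theorem to remove the nonlinear drift $\gamma(s,y,\bar X^s)$ altogether, and (c) characterizes the resulting \emph{linear} historical superprocess \eqref{defMtilde} through the log-Laplace equation \eqref{eq:cumulant} built on the path process of the SDE \eqref{def:Yt}, with uniqueness of that mild equation by a Gronwall--Dynkin argument at the level of the cumulant $V_{s,t}\varphi$, not of the measures. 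The Girsanov reduction is the missing idea; without it, the absence of the branching property blocks both your fixed-point scheme and any direct Laplace-transform argument.
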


\noindent For the proof of Theorem \ref{propconvergence}, we proceed in a compactness-uniqueness  manner. Firstly, we establish the tightness of the sequence
$(X^n)_{n\in \N^*}$ (Section \ref{sectiontension}) then use Prohorov's theorem and identify the limiting values as unique solution of the infinite-dimensional martingale problem \eqref{martingalesecondcas}, \eqref{crochetmartingalesecondcas}. The main difficulties in the proof are due to the interaction between the individual genealogies, implying the nonlinearity in the limit given by the term $d(t,y,\bar{X}^t)$.


\subsection{Tightness of $(X^n)_{n\in \N^*}$}\label{sectiontension}

In this subsection, we shall prove that:

\begin{prop}\label{proptightness}The sequence $(\mathcal{L}(X^n))_{n\in \N^*}$ is tight on $\mathcal{P}(\D(\R_+,\mathcal{M}_F(\D_{\R^d})))$.
\end{prop}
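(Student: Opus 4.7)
The strategy is standard: by a criterion for tightness of measure-valued c\`adl\`ag processes in the spirit of Roelly, and using the fact that finite linear combinations of functions of the form $G_g$ of Definition \ref{fonction-test:def}, together with the constant function $1$, generate an addition-stable algebra that is separating on $\MF(\D_{\R^d})$ (cf.\ Lemma \ref{lemme-passage1}), it is enough to prove that for each such $G_g$ the real-valued c\`adl\`ag process $(\langle X^n_t, G_g\rangle)_{t\geq 0}$ is tight in $\D(\R_+,\R)$ uniformly in $n$, and likewise for the total mass process $(\langle X^n_t, 1\rangle)_{t\geq 0}$. Marginal tightness at each fixed time is immediate from the uniform third-moment bound \eqref{estimee_momentp} together with the boundedness of $G_g$.

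For the modulus of continuity, I would apply the Aldous-Rebolledo criterion to the semimartingale decomposition \eqref{PBM}-\eqref{crochetPBM}: write
\[
\langle X^n_t, G_g\rangle = \langle X^n_0, G_g\rangle + A^{n,G_g}_t + M^{n,G_g}_t,
\]
with $A^{n,G_g}$ the predictable finite variation part and $M^{n,G_g}$ the square-integrable martingale whose angle bracket is given by \eqref{crochetPBM}. One must then show that for every pair of $(\mathcal{F}^n_t)$-stopping times $S_n \leq T_n \leq S_n + \delta_n$ with $\delta_n \to 0$, the increments $A^{n,G_g}_{T_n} - A^{n,G_g}_{S_n}$ and $\langle M^{n,G_g}\rangle_{T_n} - \langle M^{n,G_g}\rangle_{S_n}$ tend to $0$ in probability.

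The main obstacle, and the key technical step, is to bound uniformly in $n$ the \emph{a priori} singular term $n\,r(s,y)\bigl[\int G_g(y|s|y_s+h)\,K^n(y_s,dh) - G_g(y)\bigr]$ appearing in the drift. A second-order Taylor expansion of $h \mapsto G_g(y|s|y_s+h)$ at $h=0$, exploiting $G \in \Co^2_b$ and $g \in \Co^2_b$, produces a first-order contribution that vanishes when integrated against the centered Gaussian density $k^n$, while the second-order contribution is exactly of order $\sigma^2/n$: the $1/n$ precisely cancels the prefactor $n$, yielding a bounded quantity converging to $\frac{p\sigma^2}{2}\,r(s,y)\,\mathcal{D}^2 G_g(s,y)$, with uniform remainder depending only on $\|G\|_{\Co^2_b}$, $\|g\|_{\Co^2_b}$, $\bar R$ and $T$. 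Combined with the assumed bounds $b \leq \bar B$, $D \leq \bar D$, $U \leq \bar U$, the integrand of $A^{n,G_g}$ is dominated by $C\bigl(1 + \langle X^n_s, 1\rangle\bigr)\langle X^n_s, 1\rangle$, so that $|A^{n,G_g}_{T_n} - A^{n,G_g}_{S_n}| \leq C\,\delta_n\bigl(1 + \sup_{t\leq T}\langle X^n_t,1\rangle\bigr)^2$.

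An analogous analysis of \eqref{crochetPBM} yields $\langle M^{n,G_g}\rangle_{T_n} - \langle M^{n,G_g}\rangle_{S_n} \leq C\,\delta_n\bigl(1 + \sup_{t\leq T}\langle X^n_t,1\rangle\bigr)^2$: the contribution of $nr$ divided by the prefactor $1/n$ survives as a bounded term, while those carrying $b$ and $d$ are $O(1/n)$. Both increments then tend to $0$ in probability by Markov's inequality and the moment bound \eqref{estimee_momentp}; the total mass process is handled by the same scheme (the $nr$ contributions cancel outright when $G_g \equiv 1$). Applying Aldous-Rebolledo gives tightness of each $(\langle X^n_\cdot, G_g\rangle)_n$ in $\D(\R_+,\R)$, and the measure-valued tightness criterion then concludes the proof of Proposition \ref{proptightness}.
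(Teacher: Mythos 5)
Your Step--1 analysis (Aldous--Rebolledo applied to the decomposition \eqref{PBM}--\eqref{crochetPBM}, with the second-order Taylor expansion showing that the order-$n$ drift term is in fact bounded and converges to $\frac{p\sigma^2}{2}r(s,y)\mathcal{D}^2G_g(s,y)$) is correct and coincides with the first step of the paper's proof. But there is a genuine gap: tightness of the real-valued projections $\langle X^n,G_g\rangle$ and of the mass process is \emph{not} sufficient to conclude tightness in $\D(\R_+,\mathcal{M}_F(\D_{\R^d}))$, because the underlying space $\D_{\R^d}$ is Polish but not locally compact, so $\mathcal{M}_F(\D_{\R^d})$ with the weak topology has no easy compactness. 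The criterion actually needed here (Jakubowski, Theorem 4.6 of \cite{jakubowski}, as recalled in Lemma \ref{theoremjakubowskihistorique}) requires, in addition to the tightness of the projections, a \emph{compact containment condition}: for all $T,\varepsilon$ there must exist a compact $K\subset \D_{\R^d}$ such that $\sup_n \P\left(\exists t\in[0,T],\ X^n_t(K^c_T)>\varepsilon\right)\leq\varepsilon$. A Roelly-type reduction to one-dimensional projections alone is valid for the vague topology on a locally compact base space, which is not the situation here; invoking it "in the spirit of Roelly" does not close this hole.

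Establishing the compact containment is where essentially all of the difficulty of Proposition \ref{proptightness} lies, precisely because the interaction destroys the branching property. The paper's argument (Steps 2--6) introduces the stopping time $S^n_\varepsilon=\inf\{t:\,X^n_t(K^c_T)>\varepsilon\}$, couples the sub-population of paths that have exited $K$ with a system $Z^n$ of \emph{independent} branching particles with death rate $nr+\bar D+\bar U N$, proves a diffusion approximation for $\langle Z^n,1\rangle$, and uses Girsanov's theorem to show that this limiting diffusion stays above $\varepsilon/2$ up to time $T$ with probability bounded below uniformly over the (compactified) set of starting configurations; this yields $\P(S^n_\varepsilon<T)\leq \frac{2}{\varepsilon\eta}\E\left(X^n_T((K^T)^c)\right)$, and the last expectation is controlled by dominating $X^n$ by a pruned Yule tree of independent particles. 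None of this machinery appears in your proposal, and no simpler substitute is offered, so the proof as written does not establish the proposition.
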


\noindent To do that, we use the following criterion characterizing the uniform tightness of measure-valued càdlàg processes by the compactness of the support of the measures and the uniform tightness of their masses. Moreover, a tricky coupling will allow us here to deal with the nonlinearity.

\begin{lemme}\label{theoremjakubowskihistorique}
The sequence of laws of $(X^n)_{n\in \N^*}$ is tight in ${\cal P}(\mathbb{D}(\R_+,\mathcal{M}_F(\D_{\R^d})))$ if \\
(i) $\forall T>0,\, \forall \varepsilon>0,\, \exists K\subset \D_{\R^d}\mbox{ compact},\,$
$$\sup_{n\in \N^*} \mathbb{P}\left(\exists t\in [0,T],\, X_t^n(K^c_T)>\varepsilon\right)\leq \varepsilon,$$
where $K^c_T$ is the complement set of
\begin{equation}
K_T=\left\{y^t,y^{t_-}\,\,\,|\,\,\,y\in K,\, t\in [0,T]\right\}\subset \D_{\R^d}.\label{def:K_T}
\end{equation}
(ii) $\forall \ G_g$ of the form  (\ref{testfunction}),  the family of laws $((\langle X^n,\,G_g\rangle))_{n\in \N^*}$ is uniformly tight in ${\cal P}(\mathbb{D}_{\R_+})$.
\end{lemme}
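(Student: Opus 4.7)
\noindent\textbf{Proof plan for Lemma \ref{theoremjakubowskihistorique}.}

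The plan is to invoke Jakubowski's tightness criterion (see e.g.\ \cite{jakubowski}) for càdlàg processes valued in the Polish space $E=\mathcal{M}_F(\D_{\R^d})$. That criterion demands two things: first, a compact containment condition for the $E$-valued trajectories, and second, the tightness of all one-dimensional projections $\langle X^n,\varphi\rangle$ where $\varphi$ runs through a family of continuous bounded functions on $\D_{\R^d}$ that separates points of $\mathcal{M}_F(\D_{\R^d})$ and is closed under addition. Hypothesis (ii) is exactly the second requirement, thanks to Definition \ref{fonction-test:def} and the separation property asserted in Lemma \ref{lemme-passage1}: the finite linear span of the $G_g$'s is stable under addition and separates points of $\mathcal{M}_F(\D_{\R^d})$. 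So the heart of the matter is to translate (i) into compact containment in $E$.

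First I would recall that by Prohorov's theorem, a subset $\mathcal{K}\subset \mathcal{M}_F(\D_{\R^d})$ is relatively compact if and only if the total masses are bounded and the measures are uniformly tight as probability measures on $\D_{\R^d}$, i.e.\ for every $\eta>0$ there exists a compact $\tilde K\subset \D_{\R^d}$ with $\sup_{\mu\in\mathcal{K}}\mu(\tilde K^c)\leq \eta$. The uniform mass control along $[0,T]$ is the tightness of $\sup_{t\le T}\langle X^n_t,1\rangle$, which follows immediately from the moment estimate \eqref{estimee_momentp} of Proposition \ref{prop_usefulresults} and Markov's inequality. Alternatively this is a consequence of (ii) applied with any $G_g$ whose $G$ is the identity. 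The uniform-support control on compacts of $\D_{\R^d}$ is precisely condition (i), modulo verifying that the set $K_T$ defined in \eqref{def:K_T} is itself relatively compact in $\D_{\R^d}$.

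Compiling these two ingredients, I would show that for any $\varepsilon>0$ there exist a compact $\tilde K\subset \D_{\R^d}$ and a constant $M_\varepsilon$ such that
\begin{equation*}
\mathcal{K}_\varepsilon=\bigl\{\mu\in \mathcal{M}_F(\D_{\R^d})\,:\,\mu(1)\leq M_\varepsilon,\;\mu(\tilde K^c)\leq \varepsilon\bigr\}
\end{equation*}
is a compact subset of $E$ (by Prohorov) that satisfies
$\sup_n \P(\exists t\in[0,T]\,:\,X^n_t\notin \mathcal{K}_\varepsilon)\le 2\varepsilon$.
Choosing $\tilde K=K_T$ as in (i), this is obtained by combining (i) with the mass tightness above, after passing to a union bound. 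Once compact containment in $E$ is secured, Jakubowski's criterion, together with (ii) and the separation result of Lemma \ref{lemme-passage1}, yields the tightness of $(X^n)_{n\in\N^*}$ in $\mathcal{P}(\mathbb{D}(\R_+,\mathcal{M}_F(\D_{\R^d})))$.

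The main obstacle, and the reason for the precise shape of the set $K_T$ in \eqref{def:K_T}, is the verification that $K_T$ is relatively compact in the Skorohod space $\D_{\R^d}$ whenever $K$ is. This is where the càdlàg nature of the historical process enters in an essential way: the trajectory $t\mapsto X^n_t$ in $E$ has jumps, and to control it on $[0,T]$ in $\D(\R_+,E)$ one must jointly control the measures $X^n_t$ and their left-limits $X^n_{t-}$. The measures $X^n_t$ are supported on stopped paths $y^t$, while at a jump time the left-limit may involve $y^{t-}$ for some longer path $y$ that has just died or just given birth. Including both $y^t$ and $y^{t-}$ for every $y\in K$ and $t\in[0,T]$ produces a set which remains equicontinuous and uniformly bounded in the Skorohod sense, since stopping does not enlarge oscillations and the jump times considered are those of the original paths in $K$. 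Proving this compactness rigorously using the characterization of compacts in Skorohod space (via the càdlàg modulus of continuity) is the technical step I expect to require the most care.
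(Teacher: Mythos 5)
Your overall strategy is the paper's: Jakubowski's criterion applied with the point-separating, addition-stable class generated by the $G_g$'s (Lemma \ref{lemme-passage1}), hypothesis (ii) supplying tightness of the real-valued projections, and hypothesis (i) converted into a compact containment condition in $\mathcal{M}_F(\D_{\R^d})$ via Prohorov's theorem, the compactness of $K_T$ being the point borrowed from Dawson and Perkins (Lemma 7.6 of \cite{dawsonperkinsAMS}). There is, however, one step that fails as written: the set
$$\mathcal{K}_\varepsilon=\bigl\{\mu\in\mathcal{M}_F(\D_{\R^d})\,:\,\mu(1)\le M_\varepsilon,\ \mu(\tilde K^c)\le\varepsilon\bigr\}$$
is \emph{not} relatively compact, so Prohorov's theorem does not apply to it. Relative compactness of a family of finite measures requires, besides the uniform mass bound, that for \emph{every} $\eta>0$ there be a compact $K_\eta$ with $\sup_\mu\mu(K_\eta^c)\le\eta$; your set only controls the mass outside a single compact at the single level $\varepsilon$. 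Concretely, the measures $\mu_m=\delta_{y_0}+\varepsilon\,\delta_{y_m}$ with $y_0\in\tilde K$ fixed and $(y_m)_{m}$ eventually leaving every compact of $\D_{\R^d}$ all belong to $\mathcal{K}_\varepsilon$ (for $M_\varepsilon\ge 1+\varepsilon$) but admit no weakly convergent subsequence in $\mathcal{M}_F(\D_{\R^d})$, since the only candidate limit $\delta_{y_0}$ has the wrong total mass.

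The repair is routine and is exactly what the paper does: apply hypothesis (i) along a sequence of tolerances, producing compacts $K_{k,T}$ with $\sup_{n}\P\bigl(\exists t\in[0,T],\ X^n_t(K^c_{k,T})>\varepsilon/2^k\bigr)\le\varepsilon/2^k$, set $\mathfrak{K}=\bigcap_{k}\{\mu\,:\,\mu(K^c_{k,T})\le\varepsilon/2^k\}$ (intersected with the mass ball that your moment argument via \eqref{estimee_momentp} provides), and conclude by a union bound over $k$ that $\sup_n\P(\exists t\in[0,T],\ X^n_t\notin\mathfrak{K})\le\varepsilon$ up to adjusting constants. This $\mathfrak{K}$ is a uniformly bounded, uniformly tight family of measures, hence relatively compact by Prohorov, and the rest of your argument then goes through unchanged.
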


\begin{proof}[Sketch of proof]
Recall that $\mathbb{D}_{\R^d}$ embedded with the Skorohod topology is a Polish space. By Lemma 7.6 in Dawson and Perkins \cite{dawsonperkinsAMS}, the set $K_T$ is compact in $\mathbb{D}_{\R^d}$. Let $(K_{k,T})_{k\in \N^*}$ be the family of compact sets such that
$$\sup_{n\in \N^*} \mathbb{P}\left(\exists t\in [0,T],\, X_t^n(K^c_{k,T})>\varepsilon/2^k\right)\leq \varepsilon/2^k,$$
Then, the set
$$\mathfrak{K}=\bigcap_{k\in \N^*}\left\{\mu \in \mathcal{M}_F(\mathbb{D}_{\R^d})\,\,\,|\,\,\,\mu(K^c_{k,T})\leq \varepsilon/2^k\right\}$$is then relatively compact in $\mathcal{M}_F(\mathbb{D}_{\R^d})$ by the
Prohorov theorem, since it corresponds to a tight family of measures. We can then rewrite Point (i) of Lemma \ref{theoremjakubowskihistorique} in
$\forall T>0,\, \forall \varepsilon>0,\, \exists \mathfrak{K}\subset \mathcal{M}_F(\mathbb{D}_{\R^d})$ relatively compact,
$$\sup_{n\in \N^*}\mathbb{P}\left(\exists t\in [0,T],\, {X}^n_t\notin \mathfrak{K}\right)\leq \varepsilon.$$
 Moreover the class of functions $G_g$ separates the point and is closed under addition.  Thus Points (i) and (ii) of Lemma \ref{theoremjakubowskihistorique} allow us to apply the tightness result of Jakubowski  (Theorem 4.6 \cite{jakubowski}) and ensure that the sequence of the laws of  $({X}^n)_{n\in \N^*}$
is uniformly tight in ${\cal P}(\mathbb{D}(\R_+,\mathcal{M}_F(\mathbb{D}_{\R^d})))$.
\end{proof}

\begin{proof}[Proof of Proposition \ref{proptightness}]
We divide the proof into several steps.\\

\noindent \textbf{Step 1} Firstly, we prove Point (ii) of Lemma \ref{theoremjakubowskihistorique}. Let $T>0$ and $G_g$ be of the form (\ref{testfunction}).
\noindent  For every $t\in [0,T]$ and every $A>0$
\begin{align}
\mathbb{P}(|\langle X^n_t,G_g\rangle|>A)\leq \frac{\|G\|_\infty \sup_{n\in \N^*} \E(\langle X^n_t,1\rangle)}{A},
\end{align}which tends to 0 when $A$ tends to infinity thanks to \eqref{estimee_momentp}. This proves the uniform tightness of the family of marginal laws $\langle X^n_t,G_g\rangle$ for $n\in \N^*$ and $t$ fixed.
\noindent Then,  the  Aldous and Rebolledo criteria (\eg \cite{joffemetivier}) allow to prove the tightness of the process $\langle X^n,G_g\rangle$. For $\varepsilon>0$ and $\eta>0$, let us show that there exist $n_0\in \N^*$ and $\delta>0$ such that for all $n>n_0$ and  all stopping times $S_n<T_n<(S_n+\delta)\wedge T$,
\begin{align}
\mathbb{P}(|A_{T_n}^{n,G_g}-A_{S_n}^{n,G_g}|>\eta)\leq \varepsilon\quad \mbox{ and }\quad \mathbb{P}(|\langle M^{n,G_g}\rangle_{T_n}-\langle M^{n,G_g}\rangle_{S_n}|>\eta)\leq \varepsilon \label{aldous}
\end{align}where $A^{n,G_g}$ denotes the finite variation process in the r.h.s. of (\ref{PBM}).\\

\noindent Let us begin with some estimates. We fix $t\in [0,T]$, $h\in \R^d$ and a path $y\in \D_{\R^d}$ stopped at $t$. Using the Taylor-Lagrange formula, there exists $\theta_{y,t,h}\in (0,1)$ such that
\begin{multline}
G_g(y|t|y_{t_-}+h)-G_g(y)=  G\Big(\int_0^T g(s,(y|t|y_{t_-}+h)_s)ds\Big)-G\Big(\int_0^T g(s,y_s)ds\Big)\\
=  G'\Big(\int_0^T g(s,y_s)ds\Big)\Lambda(y,t,h)+  \frac{1}{2}G''\Big(\int_0^T g(s,y_s)ds+\theta_{y,t,h}\Lambda(y,t,h)\Big)   \Lambda(y,t,h)^2\label{etape34}
\end{multline}where
\begin{equation}
\Lambda(y,t,h)=\int_t^T \Big(g(s,y_{t_-}+h)-g(s,y_{t_-})\Big)ds
\end{equation}converges to zero when $h$ tends to zero. Another use of the Taylor-Lagrange formula  for the integrand in $\Lambda(y,t,h)$, shows  the existence a family $\eta_{y,t,h,s} \in (0,1)$ such that
\begin{equation}
\Lambda(y,t,h)=\int_t^T \Big(h\cdot\nabla_x g(s,y_{t_-})+ \frac{1}{2} \ ^th \ [ \mbox{Hess }g(s,y_{t_-}+\eta_{y,t,h,s} h) ]\  h\Big)ds.\label{etape35}
\end{equation}Using \eqref{etape34} and \eqref{etape35}, we integrate $G_g(y|t|y_{t_-}+h)-G_g(y)$ with respect to $K^n(dh)$. Since $k^n(h)$ is the density of the Gaussian distribution of mean 0 and covariance $\sigma^2 \Id /n$, integrals of odd powers and cross-products of the components of $h$ vanish. Thus:
\begin{multline*}
\int_{\R^d} \Big(G_g(y|t|y_{t_-}+h)-G_g(y)\Big) K^n(dh)=   G'\Big(\int_0^T g(s,y_s)ds\Big)\frac{\sigma^2 p}{2 n}  \int_t^T \Delta_x g(s,y_{t_-}+\eta_{t,y,h,s} h) ds\\
+    \frac{p\sigma^2}{2n}G''\Big(\int_0^T g(s,y_s)ds+\theta_{y,t,h}\Lambda(y,t,h)\Big)   \sum_{i=1}^d \Big(\int_0^T \partial_{x_i}g(s,y_{t_-})ds\Big)^2+\frac{C}{n^2}
\end{multline*}where $C$ is a constant depending on $G$, $g$, $\sigma^2$ and $p$ but not on $n$. Therefore
\begin{align}
\lim_{n\rightarrow +\infty} n \Big|\int_{\R^d} \Big(G_g(y|u|y_u+h)-G_g(y)\Big)K^n(dh)-\frac{\sigma^2 p}{2n}\mathcal{D}^2G_g(u,y)  \Big|=0.\label{limitD2}
\end{align}
Noting that for $G$ and $g$ in $\Co^2_b$, $\mathcal{D}^2G_g$ is bounded from the definition \eqref{testfunction}, we obtain the following upper bound:
\begin{multline}
\E(|A_{T_n}^{n,G_g}-A_{S_n}^{n,G_g}|)\leq  \delta\, \Big[\Big( \bar{R} \frac{p\sigma^2}{2} \big(\|\mathcal{D}^2G_g\|_\infty+1\big)+(\bar{B}+\bar{D})\|G\|_\infty\Big)\sup_{n\in \N^*}\E(\sup_{t\in [0,T]}\langle X^n_t,1\rangle)\\
+\|G\|_\infty \bar{U} \, \nu_d[0,T]\sup_{n\in \N^*}\E(\sup_{t\in [0,T]}\langle X^n_t,1\rangle^2)\Big].\label{etape20}
\end{multline}
For the quadratic variation process,
\begin{align}
 & \mathbb{E}(|\langle M^{n,G_g}\rangle_{T_n}-\langle M^{n,G_g}\rangle_{S_n}|)\nonumber\\
 \leq &  \|G\|_\infty^2 \delta \;\Big[\Big(2\bar{r} +\frac{\bar{b}+\bar{d}}{n}\Big) \sup_{n\in \N^*}
 \E(\sup_{t\in [0,T]}\langle X^n_t,1\rangle)+\frac{\bar{U}\ \nu_d[0,T]}{n}\sup_{n\in \N^*}\E(\sup_{t\in [0,T]}\langle X^n_t,1\rangle^2)\Big].\label{etape21}
\end{align}We thus obtain (\ref{aldous}) by applying the Markov inequality and using the moment estimates of Proposition \ref{prop_usefulresults}.\\

\noindent \textbf{Step 2} Let us now check that Point (i) of Lemma \ref{theoremjakubowskihistorique} is satisfied. We follow here ideas
introduced by Dawson and Perkins \cite{dawsonperkinsAMS}
who proved the tightness of a system of independent  historical branching Brownian particles. Here, we have interacting particles, which makes the proof much harder.

\medskip \noindent  Let $T\in \R_+$ and $\varepsilon>0$ and  $K$ be a compact set of $\D_{\R^d}$. We denote by $K^t=\{y^t\, |\, y\in K\}\subset \D_{\R^d}$ the set of the paths of $K$  stopped at time $t$. Recall that $K_T$ defined in \eqref{def:K_T} is the set of the paths of $K$ and of their left-limited paths stopped at any time before time $T$. Let us define the stopping time
\begin{equation}
S^n_\varepsilon=\inf\{t\in \R_+\, |\, X^n_t(K^c_T)>\varepsilon\}.
\end{equation}From this definition,
\begin{equation}
\P\big(\exists t\in [0,T],\, X^n_t(K_T^c)>\varepsilon\big)=\P(S^n_\varepsilon<T).\label{etape24}
\end{equation}Our purpose is to prove that it is possible to choose $K$ and $n_0$ such that $\sup_{n\geq n_0}\P(S^n_\varepsilon<T)\leq \varepsilon$.
We decompose \eqref{etape24} by considering the more tractable $X^n_T((K^T)^c)$ and write
\begin{align}
\P(S^n_\varepsilon<T)= & \P\Big(S^n_\varepsilon<T,\ X^n_T((K^T)^c)>\frac{\varepsilon}{2}\Big)+\P\Big(S^n_\varepsilon<T,\ X^n_T((K^T)^c)\leq \frac{\varepsilon}{2}\Big)\nonumber\\
\leq & \frac{2}{\varepsilon}\E\big(X^n_T((K^T)^c)\big)+\P\Big(S^n_\varepsilon<T,\ X^n_T((K^T)^c)\leq \frac{\varepsilon}{2}\Big)\label{etape28}
\end{align}by using the Markov inequality. We will show in Steps 3 to 5 that there exists $\eta\in (0,1)$ such that for $n$ large enough,
\begin{equation}
\P\Big(S^n_\varepsilon<T,\ X^n_T((K^T)^c)\leq \frac{\varepsilon}{2}\Big)\leq \P(S^n_\varepsilon<T)(1-\eta).\label{etape27}
\end{equation}Together with \eqref{etape28}, this entails that
\begin{align}
\P(S^n_\varepsilon<T)\leq \frac{2\E\big(X^n_T((K^T)^c)\big)}{\varepsilon \eta}.\label{etape12}
\end{align}In Step 6, we will also prove that the compact set $K$ can be chosen such that
\begin{equation}
\E\big(X^n_T((K^T)^c)\big)<\frac{\varepsilon^2\eta}{2}.\label{reste_a_prouver3}
\end{equation}This will conclude the proof.\\

\noindent \textbf{Step 3} Let us prove \eqref{etape27}. Heuristically, the event $\ \{S^n_\varepsilon<T,\ X^n_T((K^T)^c)\leq \frac{\varepsilon}{2}\}$ means that more than half the trajectories that exited $K$ before $S^n_\varepsilon$ have died at time $T$. On the set $\{S^n_\varepsilon<T\}$, $y^{S^n_\varepsilon}\notin K^{S^n_\varepsilon}$ implies $y^T\notin K^T$. Indeed, if a path stopped at time
$S^n_\varepsilon$ does not belong to $K$, then it is also true when it is stopped at $T>S^n_\varepsilon$. Thus on $\{S^n_\varepsilon<T\}$:
\begin{equation}
X^n_T((K^T)^c)\geq X^n_T(\{y^{S^n_\varepsilon}\notin K^{S^n_\varepsilon}\}).\label{etape10}
\end{equation}
\unitlength=0.6cm
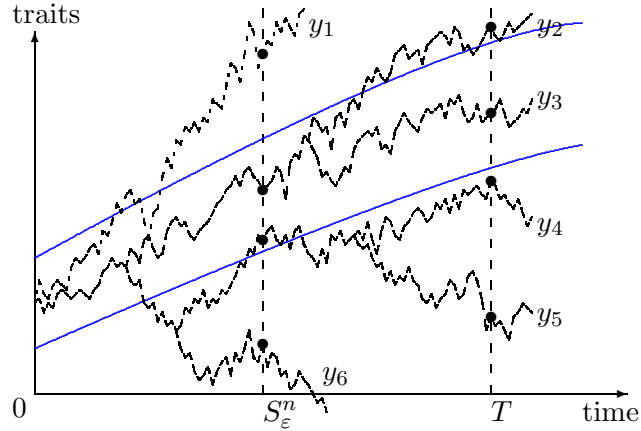
\begin{figure}[ht]
  \begin{center}
    \begin{picture}(15,10)
      \put(1,1){\vector(1,0){13}} \put(1,1){\vector(0,1){8}}
      \put(0.5,0.5){0}
      \put(0.5,9.2){traits}
      \put(13,0.4){time}
      \dashline{0.2}(6,1)(6,9.5)
      \put(6,0.4){$S^n_\varepsilon$}
      \dashline{0.2}(11,1)(11,9.5)
      \put(11,0.4){$T$}
      {\color{blue}\qbezier(1,2)(10,6)(13,6.5)
      \qbezier(1,4)(10,9)(13,9.2)
      }
      \dashline{0.1}
      (1,3)(1.1,3.5)(1.2,3.2)(1.3,3.1)(1.4,3.4)(1.5,3.5)(1.6,3.4)(1.7,3.3)(1.8,3.8)(1.9,4)
      (2,3.9)(2.1,4)(2.2,4.2)(2.3,4.3)(2.4,4.1)(2.5,4.6)(2.6,4.5)(2.7,4.8)(2.8,5.2)(2.9,5)
      (3,5.4)(3.1,5.5)(3.2,5.2)(3.3,5.3)(3.4,4.8)(3.5,4.4)(3.6,5)(3.7,5.1)(3.8,5.8)(3.9,6.2)
      (4,6)(4.1,6.6)(4.2,6.5)(4.3,6.9)(4.4,6.8)(4.5,7)(4.6,6.8)(4.7,7.1)(4.8,7)(4.9,7.5)
      (5,7.4)(5.1,7.3)(5.2,7.8)(5.3,8.3)(5.4,8.2)(5.5,7.9)(5.6,8.5)(5.7,9.2)(5.8,9)(5.9,8.2)
      (6,8.5)(6.1,8.8)(6.2,8.7)(6.3,9.2)(6.4,9)(6.5,9.3)(6.6,9.1)(6.7,8.9)(6.8,9.4)(6.9,9.5)
      \put(6,8.5){\circle*{0.25}}
      \put(7,9){$y_1$}
      \dashline{2}
      (1,3)(1.1,3.1)(1.2,2.8)(1.3,3)(1.4,2.9)(1.5,3.2)(1.6,3.3)(1.7,3.2)(1.8,3.1)(1.9,3.2)
      (2,3.3)(2.1,3.2)(2.2,3.1)(2.3,3.3)(2.4,3.4)(2.5,3.7)(2.6,3.8)(2.7,4)(2.8,3.8)(2.9,3.9)
      (3,3.8)(3.1,4.2)(3.2,4.1)(3.3,3.9)(3.4,4.4)(3.5,4.5)(3.6,4.2)(3.7,4)(3.8,4.1)(3.9,4.6)
      (4,4.8)(4.1,4.9)(4.2,4.7)(4.3,4.8)(4.4,4.5)(4.5,4.4)(4.6,4.6)(4.7,4.7)(4.8,4.8)(4.9,5)
      (5,5.2)(5.1,5.3)(5.2,5.5)(5.3,5.6)(5.4,5.8)(5.5,5.6)(5.6,5.9)(5.7,6)(5.8,5.8)(5.9,5.7)
      (6,5.5)(6.1,5.6)(6.2,5.7)(6.3,6)(6.4,5.9)(6.5,5.3)(6.6,6.1)(6.7,6.3)(6.8,6.2)(6.9,6.7)
      (7,6.6)(7.1,7.1)(7.2,7)(7.3,6.8)(7.4,6.9)(7.5,6.4)(7.6,7.1)(7.7,7.2)(7.8,6.9)(7.9,7.4)
      (8,7.5)(8.1,7.3)(8.2,7.7)(8.3,8)(8.4,7.9)(8.5,8.2)(8.6,8)(8.7,7.9)(8.8,8.1)(8.9,8.5)
      (9,8.6)(9.1,8.7)(9.2,8.4)(9.3,8.5)(9.4,8.6)(9.5,8.3)(9.6,8.7)(9.7,8.8)(9.8,8.9)(9.9,9)
      (10,8.9)(10.1,8.7)(10.2,9.1)(10.3,9)(10.4,8.6)(10.5,8.7)(10.6,8.5)(10.7,8.6)(10.8,8.8)(10.9,9)
      (11,9.1)(11.1,8.9)(11.2,8.8)(11.3,9)(11.4,9.2)(11.5,9)(11.6,9.1)(11.7,9.2)(11.8,9.3)(11.9,9.4)
      \put(6,5.5){\circle*{0.25}}
      \put(11,9.1){\circle*{0.25}}
      \put(12,9){$y_2$}
      \dashline{2}
      (7,6.6)(7.1,6.4)(7.2,6.2)(7.3,5.8)(7.4,5.9)(7.5,5.7)(7.6,5.6)(7.7,5.8)(7.8,6)(7.9,5.9)
      (8,6.4)(8.1,6.6)(8.2,6.7)(8.3,6.8)(8.4,6.7)(8.5,6.9)(8.6,6.4)(8.7,6.5)(8.8,6.3)(8.9,6.8)
      (9,7)(9.1,7.2)(9.2,7)(9.3,6.9)(9.4,6.8)(9.5,7.1)(9.6,7.3)(9.7,7.2)(9.8,7.3)(9.9,7.4)
      (10,7.1)(10.1,7.5)(10.2,7.4)(10.3,7.6)(10.4,7.2)(10.5,7)(10.6,7.1)(10.7,7.4)(10.8,7)(10.9,7.1)
      (11,7.2)(11.1,7.3)(11.2,7.5)(11.3,7.2)(11.4,6.9)(11.5,7)(11.6,7.1)(11.7,7.3)(11.8,7.2)(11.9,7.5)
      \put(11,7.2){\circle*{0.25}}
      \put(12,7.4){$y_3$}
      \dashline{2}
      (3,3.8)(3.1,3.5)(3.2,3.4)(3.3,3.5)(3.4,3)(3.5,2.9)(3.6,3)(3.7,2.6)(3.8,2.8)(3.9,2.5)
      (4,2.6)(4.1,2.4)(4.2,2.5)(4.3,2.9)(4.4,2.8)(4.5,3.1)(4.6,3)(4.7,3.3)(4.8,2.9)(4.9,3.1)
      (5,3)(5.1,3.3)(5.2,3.2)(5.3,3.4)(5.4,3.6)(5.5,3.9)(5.6,3.8)(5.7,4.2)(5.8,4.1)(5.9,4.5)
      (6,4.4)(6.1,4.6)(6.2,4.7)(6.3,4.4)(6.4,4.3)(6.5,4.5)(6.6,4.2)(6.7,4.6)(6.8,4.7)(6.9,4.3)
      (7,4.1)(7.1,4)(7.2,3.8)(7.3,4.2)(7.4,3.9)(7.5,4.5)(7.6,4.4)(7.7,4.6)(7.8,4.3)(7.9,4.4)
      (8,4.5)(8.1,4.6)(8.2,4.4)(8.3,4.8)(8.4,4.4)(8.5,4.1)(8.6,4.6)(8.7,4.9)(8.8,4.8)(8.9,4.5)
      (9,4.4)(9.1,4.7)(9.2,4.8)(9.3,5)(9.4,5.1)(9.5,5)(9.6,4.8)(9.7,4.9)(9.8,5.2)(9.9,5.1)
      (10,5.3)(10.1,5)(10.2,5.4)(10.3,5.1)(10.4,5.5)(10.5,5.4)(10.6,5.5)(10.7,5.6)(10.8,5.4)(10.9,5.3)
      (11,5.7)(11.1,5.6)(11.2,5.3)(11.3,5.4)(11.4,5.2)(11.5,5.1)(11.6,5.3)(11.7,5)(11.8,4.7)(11.9,4.9)
      \put(6,4.4){\circle*{0.25}}
      \put(11,5.7){\circle*{0.25}}
      \put(12,4.6){$y_4$}
      \dashline{2}
      (8,4.5)(8.1,4.3)(8.2,4.2)(8.3,4.3)(8.4,4)(8.5,3.8)(8.6,3.9)(8.7,3.5)(8.8,3.7)(8.9,3.4)
      (9,3.6)(9.1,3.5)(9.2,3.8)(9.3,3.5)(9.4,3.4)(9.5,3.1)(9.6,3)(9.7,3.2)(9.8,2.9)(9.9,3.1)
      (10,3.2)(10.1,3.5)(10.2,3.4)(10.3,3.1)(10.4,3.2)(10.5,3.6)(10.6,3.7)(10.7,3.5)(10.8,3.1)(10.9,2.4)
      (11,2.7)(11.1,2.5)(11.2,2.6)(11.3,2.4)(11.4,2.2)(11.5,2.7)(11.6,2.8)(11.7,3)(11.8,2.9)(11.9,2.8)
      \put(11,2.7){\circle*{0.25}}
      \put(12,2.6){$y_5$}
      \dashline{2}
      (4.1,2.4)(4.2,2)(4.3,1.8)(4.4,1.9)(4.5,1.5)(4.6,1.3)(4.7,1.4)(4.8,1.6)(4.9,1.2)
      (5,1.5)(5.1,1.7)(5.2,1.8)(5.3,1.4)(5.4,1.9)(5.5,1.7)(5.6,2)(5.7,2.4)(5.8,1.8)(5.9,1.7)
      (6,2.1)(6.1,1.6)(6.2,1.9)(6.3,1.8)(6.4,1.4)(6.5,1.5)(6.6,1.1)(6.7,1.2)(6.8,1.5)(6.9,1.3)
      (7,1.2)(7.1,1)(7.2,0.8)(7.3,0.9)(7.4,0.6)
      \put(6,2.1){\circle*{0.25}}
      \put(7.3,1.3){$y_6$}
\end{picture}
  \end{center}
  \vspace{-0.5cm}
  \caption{{\small\textit{The compact $K$ is the region between the two lines. The paths which are drawn correspond to the support  of $X^n_T$. $X^n_T((K^T)^c)$ counts the trajectories that do not belong to $K$ between $0$ and $T$: here we have $y_1$, $y_2$, $y_4$, $y_5$ and $y_6$. The quantity $X^n_T(\{y^{S^n_\varepsilon}\notin K^{S^n_\varepsilon}\})$ counts the trajectories, at time $T$, that don't belong to $K$ between $0$ and $S^n_\varepsilon$. Here, we have $y_1$, $y_4$, $y_5$ and $y_6$ ; although $y_2$ does not belong to $K$ between time $0$ and $T$, it belongs to $K$ between $0$ and $S^n_\varepsilon$. To obtain the trajectories accounting for $X^n_T(\{y^{S^n_\varepsilon}\notin K^{S^n_\varepsilon}\})$, we count the descendants of the 3 points at time $S^n_\varepsilon$ corresponding to trajectories $y^{S^n_\varepsilon}\notin K^{S^n_\varepsilon}$. We can also check that relation \eqref{etape10} is satisfied. }}}\label{fig2}
\end{figure}

\bigskip \noindent
Hence:
\begin{align}
\P\Big(S^n_\varepsilon< T,\, X^n_T((K^T)^c)\leq \frac{\varepsilon}{2}\Big)\leq &
\P\Big(S^n_\varepsilon< T,\, X^n_T(\{y^{S^n_\varepsilon}\notin K^{S^n_\varepsilon}\})\leq \frac{\varepsilon}{2}\Big)\nonumber\\
= & \E\Big(\ind_{S^n_\varepsilon< T}\P\big(X^n_{S^n_\varepsilon+(T-S^n_\varepsilon)}(\{y^{S^n_\varepsilon}\notin K^{S^n_\varepsilon}\})
\leq \varepsilon/2\, |\, \mathcal{F}_{S^n_\varepsilon}\big)\Big).\label{etape11}
\end{align}
Our purpose is to obtain an upper bound of the form $(1-\eta)$ with $\eta\in (0,1)$ for the probability under the expectation
in the r.h.s. of (\ref{etape11}).
This term is the probability that the population issued from particles which at time $S^n_\varepsilon$ satisfy $y^{S^n_\varepsilon}\notin K^{S^n_\varepsilon}$, has a size smaller than $\varepsilon/2$. In view of \eqref{etape11}, we will work on the set $\{S^n_\varepsilon<T\}$ until the end of the proof.\\
Using \eqref{estimee_momentp} and Markov's inequality, it is possible, for any $\eta>0$, to choose $N>0$  large enough such that
\begin{equation}\P\Big(\sup_{S^n_\varepsilon \leq s\leq T}\langle X^n_s,1\rangle>N\, |\, \mathcal{F}_{S^n_\varepsilon}\Big)<\eta.\label{etape13}\end{equation}
We need to introduce some coupling with independent trajectories. Let us define the process $(Z^{n}_t(dy))_{t\in \R_+}$ as follows. The independent particles of $Z^{n}$ are started at time $S^n_\varepsilon$ with the trajectories of $X^n_{S^n_\varepsilon}$ such that
 $\{y^{S^n_\varepsilon}\notin K^{S^n_\varepsilon}\}$ and  the initial condition is
\begin{equation}
Z_{S^n_\varepsilon}^{n}(dy)=\ind_{y^{S^n_\varepsilon}\notin K^{S^n_\varepsilon}}X^n_{S^n_\varepsilon}(dy).
\end{equation}
Their birth and death rates are $n r(t,y)$ and $n r(t,y)+\bar{D}+\bar{U}N$. By a coupling argument, we have
\begin{multline}
\P\big(X^n_{T}(\{y^{S^n_\varepsilon}\notin K^{S^n_\varepsilon}\})
\leq \varepsilon/2\, |\, \mathcal{F}_{S^n_\varepsilon}\Big)  \label{etape18}\\
\begin{aligned}
\leq  & \P\Big(\langle Z^{n}_{T},1\rangle\leq \frac{\varepsilon}{2} ; \sup_{S^n_\varepsilon \leq s\leq T} \langle X^n_{s},1\rangle \leq N\, |\, \mathcal{F}_{S^n_\varepsilon}\Big)
+ \P\Big(\sup_{S^n_\varepsilon \leq  s\leq T}\langle X^n_{s},1\rangle >N\, |\, \mathcal{F}_{S^n_\varepsilon}\Big)\\
\leq & 1-\P\Big(\inf_{s\in [S^n_\varepsilon,T]}\langle Z^n_{s},1\rangle >\frac{\varepsilon}{2}\, |\, \mathcal{F}_{S^n_\varepsilon}\Big) + \eta.
\end{aligned}
\end{multline}
If we can exhibit $\eta>0$ such that for $n$ large enough
\begin{equation}
\E\Big(\ind_{S^n_\varepsilon< T}\ \P\big(\inf_{s\in [S^n_\varepsilon,T]}\langle Z^n_{s},1\rangle >\frac{\varepsilon}{2}\, |\, \mathcal{F}_{S^n_\varepsilon}\big)\Big)>2\eta \P(S^n_\varepsilon<T),\label{reste_a_prouver}
\end{equation}then from \eqref{etape11} and (\ref{etape18}), the r.h.s. of \eqref{etape11} is strictly smaller
than $(1-2\eta+\eta)\P(S^n_\varepsilon<T)=(1-\eta)\P(S^n_\varepsilon<T)$, and \eqref{etape27} will be proved.\\

\noindent \textbf{Step 4} Let us prove (\ref{reste_a_prouver}). Notice that on the set $\{S^n_\varepsilon<T\}$,
\begin{align*}
\langle Z^n_{S^n_\varepsilon},1\rangle=X^n_{S^n_\varepsilon}(\{y^{S^n_\varepsilon}\notin K^{S^n_\varepsilon}\})
\geq  X^n_{S^n_\varepsilon}(\{y^{S^n_\varepsilon}\notin K_{S^n_\varepsilon}\})
\geq  X^n_{S^n_\varepsilon}(\{y^{S^n_\varepsilon}\notin K_{T}\})=X^n_{S^n_{\varepsilon}}(K_T^c)>\varepsilon.
\end{align*}
By coupling arguments (using deletions of particles in the initial condition $Z^n_{S^n_\varepsilon}$), and since we are considering a lower bound  with an infimum in \eqref{reste_a_prouver}, we can consider without restriction that $\langle Z^n_{S^n_\varepsilon},1\rangle=([n\varepsilon]+1)/n,$ where $[x]$ denotes the integer part of $x$.\\
Let us establish a diffusion approximation of $\langle Z^n_{S^n_\varepsilon+.},1\rangle$ when $n$ is large. We know that for any $t\geq 0$, the process
\begin{equation}
\langle Z^n_{S^n_\varepsilon+t},1\rangle=\langle Z^n_{S^n_\varepsilon},1\rangle-\big(\bar{D}+\bar{U}N\big)\int_{0}^t \langle Z^n_{S^n_\varepsilon+s},1\rangle ds+M^{n,Z}_{t}\label{etape22}
\end{equation}where $M^{n,Z}$ is a square integrable martingale such that for all $ s\leq t$,
\begin{multline}
 2\underline{R}\int_s^t \langle Z^n_{S^n_\varepsilon+u},1\rangle du\leq \langle M^{n,Z}\rangle_t-\langle M^{n,Z}\rangle_s =\int_s^t \Big\langle Z^n_{S^n_\varepsilon+u},2 r(S^n_\varepsilon+u,.)+\frac{\bar{D}+\bar{U}N}{n}\Big\rangle du\\
  \leq \big(2\bar{R}+\frac{\bar{D}+\bar{U}N}{n}\big)\int_s^t \langle Z^n_{S^n_\varepsilon+u},1\rangle du.\label{etape23}
\end{multline}
Proposition  \ref{prop_usefulresults} and adaptations of (\ref{etape20}) and (\ref{etape21}) allow us to establish that  the  laws of $(\langle Z^n_{S^n_\varepsilon+.},1\rangle, \langle M^{n,Z}\rangle_.)$ are uniformly tight in ${\cal P}(\D(\R_+,\R_+^2))$. As a consequence, there exists a subsequence, denoted again by $(\langle Z^{n}_{S^n_\varepsilon+.},1\rangle, \langle M^{n,Z}\rangle_.)_{n\in \N^*}$ for simplicity, that converges in distribution to a limit,
say $(\mathcal{Z}, \mathcal{A})$ where $\mathcal{Z}$ and $\mathcal{A}$ are necessarily continuous. Let us define on the canonical space
\begin{equation}
\mathcal{N}_t = \mathcal{Z}_t- \varepsilon+\int_0^t (\bar{D}+\bar{U}N) \mathcal{Z}_s ds\label{def_mathcalN},
\end{equation}
and prove that it is a martingale.
Let $0\leq s_1\leq \cdots \leq s_k<s<t$ and let $\phi_1,\cdots \phi_k$ be bounded measurable functions on $\R_+$. We define
\begin{equation}
\label{psi}
\Psi(\mathcal{Z})=\phi_1(\mathcal{Z}_{s_1})\cdots \phi_k(\mathcal{Z}_{s_k})\Big\{\mathcal{Z}_t-\mathcal{Z}_s+\int_s^t (\bar{D}+\bar{U}N)\mathcal{Z}_u\, du\Big\}.
\end{equation}
From (\ref{etape22}), $\E(\Psi(\langle Z^n_{S^n_\varepsilon+.},1\rangle))=0$. Similarly to Prop. \ref{prop_usefulresults} (ii), we can prove from the SDE \eqref{etape22} that $\E(\sup_{t\in [0,T]}\langle Z^n_{S^n_\varepsilon+t},1\rangle^3)<+\infty$ for any $T>0$. Then the sequence $(\Psi(\langle Z^n_{S^n_\varepsilon+.},1\rangle))_{n\in \N^*}$
is uniformly integrable and by the continuity of $\Psi$, $\lim_{n\rightarrow +\infty}\E\big(\Psi(\langle Z^n_{S^n_\varepsilon+.},1\rangle\big)
=\E(\Psi(\mathcal{Z})).$ Then we deduce that  $\E(\Psi(\mathcal{Z}))=0$. Hence, $\mathcal{N}$ is a continuous square integrable martingale, and  Theorem 6.1 p. 341 in
Jacod and Shiryaev \cite{jacod} implies that  its quadratic variation process
is $\langle \mathcal{N}\rangle= \mathcal{A}$.\\
Moreover, using the Skorokhod representation theorem (see \eg \cite{billingsley_probability_and_measure} Th. 25.6 p.333), there exist a random sequence
$(\widetilde{\mathcal{Z}}^n,\widetilde{\mathcal{A}}^n)_{n\in \N^*}$ and a random couple
$(\widetilde{\mathcal{Z}},\widetilde{\mathcal{A}})$ defined on the same probability space, distributed respectively as
$(\langle Z^{n}_{S^n_\varepsilon+.},1\rangle,\langle M^{n,Z}\rangle_.)_{n\in \N^*}$ and $(\mathcal{Z}, \mathcal{A})$, and such that
\begin{equation}
\lim_{n\rightarrow +\infty}(\widetilde{\mathcal{Z}}^n,\widetilde{\mathcal{A}}^n)
=(\widetilde{\mathcal{Z}},\widetilde{\mathcal{A}})\qquad \mbox{ a.s.}.
\end{equation}
Then, from (\ref{etape23}), we have a.s. that for all $0\leq s\leq t$,
\begin{equation}
2 \underline{R}\int_s^t  \widetilde{\mathcal{Z}}_u\, du\leq \widetilde{\mathcal{A}}_t-\widetilde{\mathcal{A}}_s
\leq 2 \bar{R}\int_s^t \widetilde{\mathcal{Z}}_u\, du.\label{encadrement_A}
\end{equation}
This implies (see e.g. Rudin \cite[Chapter 8]{rudin}) that $\widetilde{\mathcal{A}}$ is a.s. an absolutely continuous function and that there exists a random $\mathcal{F}_t$-measurable function $\rho(u)$ such that $\forall u\in \R_+,\,
2\underline{R}\leq \rho(u)\leq 2\bar{R}$ and
\begin{equation}
\widetilde{\mathcal{A}}_t=\int_0^t \rho(u) \widetilde{\mathcal{Z}}_u\, du\qquad \mbox{ a.s.}.
\end{equation}
Therefore, there exists a standard Brownian motion $(B_t)_{t\in \R_+}$ such that almost surely:
\begin{equation}
\widetilde{\mathcal{N}}_t = \widetilde{\mathcal{Z}}_t- \varepsilon+\int_0^t (\bar{D}+\bar{U}N) \widetilde{\mathcal{Z}}_s ds=\int_0^t \sqrt{\rho(u)\widetilde{\mathcal{Z}}_u} dB_u.\label{def_mathcalN}
\end{equation}

\bigskip
Now that the diffusive limit for $\langle Z^n_{S^n_\varepsilon+.},1\rangle$ has been obtained, let us return to \eqref{reste_a_prouver}:
\begin{align}
\P\Big(\inf_{s\in [S^n_\varepsilon,T]}\langle Z^n_{s},1\rangle >\frac{\varepsilon}{2}\, |\, \mathcal{F}_{S^n_\varepsilon}\Big) & \ind_{S^n_\varepsilon< T}
= \P\Big(\inf_{u\in [0,T-S^n_\varepsilon]}\langle Z^n_{S^n_\varepsilon+u},1\rangle >\frac{\varepsilon}{2}\, |\, \mathcal{F}_{S^n_\varepsilon}\Big)\ind_{S^n_\varepsilon< T}\nonumber\\
\geq & \ \P\Big(\inf_{u\in [0,T]} (y|s|\widetilde{\mathcal{Z}}^n)_{s+u} >\frac{\varepsilon}{2}\Big)\Big|_{y=\langle Z^n_{.\wedge S^n_\varepsilon},1\rangle,\ s=S^n_\varepsilon}\ind_{S^n_\varepsilon< T}.\label{etape19}
\end{align}Notice that for all $y$ and $s$,
\begin{align}\lim_{n\rightarrow +\infty} \P\Big(\inf_{u\in [0,T]} (y|s|\widetilde{\mathcal{Z}}^n)_{s+u} > \frac{\varepsilon}{2}\Big)= & \P\Big(\inf_{u\in [0,T]} (y|s|\widetilde{\mathcal{Z}})_{s+u} \geq \frac{\varepsilon}{2}\Big)=  \P_{y,s}\Big(\inf_{u\in [0,T]} \widetilde{\mathcal{Z}}_u \geq \frac{\varepsilon}{2}\Big),\label{etape30}
\end{align}
where the notation $\P_{y,s}$ reminds that the distribution of $\widetilde{\mathcal{Z}}$ depends on $\rho$ which may itself depend on $(y,s)$.
We need some uniformity of the convergence in \eqref{etape30}, with respect to $y$ and $s$.\\
For $\zeta>0$ and $(z,r)\in\D\times [0,T]$, we denote by $B((z,r),\zeta)$ the open ball centered at $(z,r)$ with radius $\zeta$. There exists $\zeta>0$ small enough such that for all $(y,s)\in \mathcal{B}((z,r),\zeta)$ and $\mathcal{Z}\in \D$,  $\dSk\big((y|s|\mathcal{Z}),(z|r|\mathcal{Z})\big)<\varepsilon/4$ where $\dSk$ is the Skorokhod distance on $\D$ (see Proposition \ref{prop_Sk} in appendix). As a consequence, for this choice of $\zeta$ and all $n\in \N^*$,
\begin{equation}
\P\Big(\inf_{u\in [0,T]}(z|r|\widetilde{\mathcal{Z}}^n)_{r+u}>\frac{3\varepsilon}{4}\Big)\leq \P\Big(\inf_{u\in [0,T]}(y|s|\widetilde{\mathcal{Z}}^n)_{s+u}>\frac{\varepsilon}{2}\Big).\label{etape32}
\end{equation}
Let $\xi>0$ be a small positive number. Since the sequence of laws of $\langle Z^n_.,1\rangle$ is uniformly tight, there exists a compact set $K_\xi$ of $\D_{\R^d}$ such that for $n$ large enough, $\P\big(\langle Z^n_.,1\rangle \notin K_\xi\big)<\xi$. Since $K_\xi\times [0,T]$ is compact, there exists a finite sequence $(z_i,r_i)_{1\leq i\leq M}$ with $M=M(\xi)\in \N^*$ such that $$K_\xi\times [0,T]\subset \bigcup_{i=1}^{M(\xi)} B\big((z_i,r_i),\zeta\big).$$
With an argument similar to \eqref{etape30}, there exists for each $i\in \{1,\dots,M\}$, an integer $n_i$ such that for all $n\geq n_i$,
\begin{align}
\P\Big(\inf_{u\in [0,T]}(z_i|r_i|\widetilde{\mathcal{Z}}^n)_{r_i+u}>\frac{3\varepsilon}{4}\Big)>\frac{1}{2}\P_{z_i,r_i}\Big(\inf_{u\in [0,T]}\widetilde{\mathcal{Z}}_u\geq \frac{3\varepsilon}{4}\Big).
\end{align}Hence, thanks to \eqref{etape32}, we obtain that for all $(y,s)\in K_\xi\times [0,T]$ and $n\geq \max_{1\leq i\leq M(\xi)} n_i$,
\begin{multline}
 \P\Big(\inf_{u\in [0,T]} (y|s|\widetilde{\mathcal{Z}}^n)_{s+u} \geq \frac{\varepsilon}{2}\Big)\geq \min_{1\leq i\leq M}\P\Big(\inf_{u\in [0,T]}(z_i|r_i|\widetilde{\mathcal{Z}}^n)_{r_i+u}>\frac{3\varepsilon}{4}\Big)\\
>\min_{1\leq i\leq M(\xi)}\frac{1}{2}\P_{z_i,r_i}\Big(\inf_{u\in [0,T]}\widetilde{\mathcal{Z}}_u\geq \frac{3\varepsilon}{4}\Big)=:2\eta(\xi).\label{reste_a_prouver2}
\end{multline}
Then, from \eqref{etape19} and \eqref{reste_a_prouver2}, the left hand side of \eqref{reste_a_prouver} has the lower bound:
\begin{multline}
\E\Big(\ind_{S^n_\varepsilon< T}\ \P\big(\inf_{s\in [S^n_\varepsilon,T]}\langle Z^n_{s},1\rangle >\frac{\varepsilon}{2}\, |\, \mathcal{F}_{S^n_\varepsilon}\big)\Big)\label{etape2}\\
\begin{aligned}
\geq & \E\Big(\P\Big(\inf_{u\in [0,T]} (y|s|\widetilde{\mathcal{Z}}^n)_{s+u} >\frac{\varepsilon}{2}\Big)\Big|_{y=\langle Z^n_{.\wedge S^n_\varepsilon},1\rangle,\ s=S^n_\varepsilon}\ind_{S^n_\varepsilon< T}\ind_{\langle Z^n_.,1\rangle \in K_\xi}\Big)\\
\geq & \ 2\eta(\xi)\ \P\big(S^n_\varepsilon< T,\ \langle Z^n_.,1\rangle \in K_\xi\big).
\end{aligned}
\end{multline}The term $\P\big(S^n_\varepsilon< T,\ \langle Z^n_.,1\rangle \in K_\xi\big)$ in the right hand side converges to $\P(S^n_\varepsilon<T)$ when $\xi$ tends to zero, and there exists $\xi_0>0$  small enough such that this term is larger than $\P(S^n_\varepsilon<T)/2$ for every $\xi<\xi_0$. (In case $\P(S^n_\varepsilon<T)=0$, the proof is done and this is also true). Thus, for $0<\xi<\xi_0$, the left hand side in \eqref{etape2} is lower bounded by $\eta(\xi)\P(S^n_\varepsilon<T)$. This proves \eqref{reste_a_prouver} provided $\eta$ is positive, which we shall establish in Step 5.\\

\noindent \textbf{Step 5} Let us prove that $\eta$ defined in \eqref{reste_a_prouver2} is positive. Since it is a minimum over a finite number of terms, let us consider one of the latter. For this, we consider $(z,r)\in \D_{\R^d}$ and our purpose is to prove that
$$\P_{z,r}\Big(\inf_{u\in [0,T]}\widetilde{\mathcal{Z}}_u\geq \frac{3\varepsilon}{4}\Big)>0.$$
For $M>0$, let us define the stopping time $\varsigma_M=\inf\{t\geq 0,\, \widetilde{\mathcal{Z}}\geq M\}$ and let us introduce
\begin{equation}
\tau_{\varepsilon/2}=\inf\big\{t\geq 0,\, \widetilde{\mathcal{Z}}_t\leq \frac{\varepsilon}{2}\big\}
\end{equation}such that $\P_{z,r}(\inf_{s\in [0,T]}\widetilde{\mathcal{Z}}_s>\varepsilon/2)= \P_{z,r}(\tau_{\varepsilon/2}> T)$. Our purpose is to prove that the latter quantity is positive. We will change the law of the process to be able to compare this quantity to the one defined with a Brownian motion. Let $\lambda>0$. From Itô's formula:
\begin{align*}
e^{\lambda\widetilde{\mathcal{Z}}_{t\wedge \varsigma_M}}= & e^{\lambda\varepsilon}+\int_0^{t\wedge \varsigma_M} \big(\lambda^2\bar{R}-\lambda(\bar{D}+\bar{U}N)\big)\widetilde{\mathcal{Z}}_s e^{\widetilde{\mathcal{Z}}_s}ds+\int_0^{t\wedge \varsigma_M}\lambda \sqrt{\rho(s)\widetilde{\mathcal{Z}}_s}e^{\widetilde{\mathcal{Z}}_s}dB_s.
\end{align*}Taking the expectation and choosing $N$ sufficiently large ($N>(\lambda\bar{R}-\bar{D})/\bar{U}$), we obtain that
$\E\big(\exp(\lambda\widetilde{\mathcal{Z}}_{t\wedge \varsigma_M})\big)\leq \exp(\lambda \varepsilon)$. From \eqref{def_mathcalN} and since $2\underline{R}\leq \rho(u)\leq 2\bar{R}$, we can classically show that $\E\big(\sup_{t\in [0,T]}\widetilde{\mathcal{Z}}^2_t\big)<+\infty$, from which we deduce that $\lim_{M\rightarrow +\infty}\varsigma_M=+\infty$ a.s. Moreover, it follows by Fatou's lemma that for any $t\in [0,T]$, $\E\big(\exp(\lambda\widetilde{\mathcal{Z}}_{t})\big)\leq \exp(\lambda \varepsilon)$ and by Jensen's inequality and Fubini's theorem, we get
\begin{equation}
\E\Big(e^{ \int_0^T \frac{(\bar{D}+\bar{U}N)^2}{2\rho(s)}\widetilde{\mathcal{Z}}_s ds}\Big)\leq \frac{1}{T}\int_0^T \E\big(e^{ \frac{(\bar{D}+\bar{U}N)^2T}{2\underline{R}}\widetilde{\mathcal{Z}}_s}\big)ds\leq e^{ \frac{(\bar{D}+\bar{U}N)^2T \varepsilon}{2\underline{R}}}<+\infty.
\end{equation}Novikov's criterion is satisfied and  Girsanov's Theorem applied to \eqref{def_mathcalN} tells us that under the probability $\mathbb{M}$ defined by
\begin{equation}\frac{d\mathbb{M}}{d\P_{z,r}}\Big|_{\mathcal{F}_t}=\exp\Big(-\int_0^t \frac{(\bar{D}+\bar{U}N)\sqrt{\widetilde{\mathcal{Z}}_{u}}}{\sqrt{\rho(u)}}dB_u-\frac{1}{2}\int_0^t \frac{(\bar{D}+\bar{U}N)^2 \widetilde{\mathcal{Z}}_u}{\rho(u)}du\Big),
\end{equation}$\widetilde{\mathcal{Z}}$ is a Brownian motion started at $\varepsilon$.
Then we have
\begin{align*}
 & \mathbb{M}(\tau_{\varepsilon/2}\leq T)+\mathbb{M}(\tau_{\varepsilon/2}> T)=1\\
 & \E^\mathbb{M}\big(\widetilde{\mathcal{Z}}_{T\wedge \tau_{\varepsilon/2}}\big)= \frac{\varepsilon}{2}\mathbb{M}(\tau_{\varepsilon/2}\leq T)+\E^\mathbb{M}\big(\widetilde{\mathcal{Z}}_{T}\ind_{\tau_{\varepsilon/2}> T}\big)=\varepsilon.
\end{align*}If $\mathbb{M}(\tau_{\varepsilon/2}\leq T)=1$, this yields thus a contradiction since we would obtain $\varepsilon/2=\varepsilon$ for the second equation. Thus, $\mathbb{M}(\tau_{\varepsilon/2}\leq T)<1$ and $\mathbb{P}_{z,r}(\tau_{\varepsilon/2}\leq T)<1$. This shows that $\eta>0$.\\

\noindent \textbf{Step 6} It now remains to prove \eqref{reste_a_prouver3}. We follow Dawson and Perkins \cite[Lemma 7.3]{dawsonperkinsAMS}. For $n\in \N^*$, we can exhibit, by a coupling argument, a process $\widetilde{X}^n$ constituted of independent particles with birth rate $n r(t,y)+b(t,y)$ and death rate $nr(t,y)$, started at $X^n_0$ and which dominates $X^n$. In particular, for $T>0$ and for any compact set $K\subset \D_{\R^d}$, $\E(X^n_T((K^T)^c))\leq \E(\widetilde{X}^n_T((K^T)^c))$. \\
The tree underlying $\widetilde{X}^n$ can be obtained by pruning a Yule tree with traits in $\R^d$, where a particle of lineage $y$ at time $t$ gives two offspring at rate $2n r(t,y)+b(t,y)$. One has lineage $y$ and the other has lineage $(y|t|y+h)$ where $h$ is  distributed following $K^n(dh)$. Using Harris-Ulam-Neveu's notation to label the particles (see e.g. Dawson \cite{dawson}), we denote by $Y^{n,\alpha}$ for $\alpha\in \mathcal{I}=\bigcup_{m=0}^{+\infty}\{0,1\}^{m+1}$ the lineage of the particle with label $\alpha$. Particles are exchangeable and the common distribution of the process $Y^{n,\alpha}$ is the one of a pure jump process of $\R^d$, where the jumps occur at rate $2n r(t,y)+b(t,y)$ and where the jump sizes are distributed according to the probability measure $\frac{1}{2}\delta_0(ds)+\frac{1}{2}K^n(dh)$. We denote by $\,\P^n_x\,$ its distribution starting from $x\in \R^d$. It is standard to prove that the family of laws $(\P^n_{x},\, n\in \N^*,\, x\in C)$ is tight as soon as $C$ is a compact set of $\R^d$.\\
At each node of the Yule tree, an independent pruning is made: the offspring are kept with probability $(n r(t,y)+b(t,y))/(2 n r(t,y)+b(t,y))$ and are erased with probability $n r(t,y)/(2 n r(t,y)+b(t,y))$. Let us denote by $V_t$ the set of individuals alive at time $t$ and write $\alpha\succ i$ to say that the individual $\alpha$ is a descendant of the individual $i$:
 \begin{align*}
 \E\big(\widetilde{X}^n_T((K^T)^c)\big)= & \E\left(\frac{1}{n}\sum_{i=1}^{N^n_0} \sum_{\alpha\succ i} \E\Big(\ind_{Y^{n,\alpha}\notin K^T}\ind_{\alpha \in V_T}\Big)\right)
 =  \E\left(\frac{1}{n} \sum_{i=1}^{N^n_0} \P^n_{X^i_0}\big((K^T)^c\big)\E\Big(\sum_{\alpha \succ i}\ind_{\alpha \in V_T}\Big)\right)\\
 \leq & \E\left(\frac{1}{n} \sum_{i=1}^{N^n_0} \P^n_{X^i_0}\big((K^T)^c\big)e^{\bar{B}T}\right)
 =  e^{\bar{B}T}\int_{\D_{\R^d}}X^n_0(dy)\P^n_{y_0}\big((K^T)^c\big).
 \end{align*}
 The bound  $e^{\bar{B}T}$ is an upper bound of the mean population size at $T$ that descends from a single initial individual, when the growth rate is bounded above by $\bar{B}$.
 For each $\varepsilon>0$ there exists a compact set $C$ of $\R^d$ and a compact set $K$ of $\D_{\R^d}$ such that
 $$\sup_{n\in \N^*}X^n_0(C^c)\leq \varepsilon\quad \mbox{ and }\quad \sup_{n\in \N^*}\sup_{y_0\in C}\P^n_{y_0}\big((K^T)^c\big)\leq \varepsilon,$$
 which concludes the proof.
\end{proof}

\subsection{Identification of the limiting values}\label{sectionidentification}

Let us denote by $\bar{X}\in \Co(\R_+,\mathcal{M}_F(\D))$ a limit point of $(\bar{X}^n)_{n\in \N^*}$.
Our purpose here is to characterize the limiting value via the martingale problem that appears in Theorem \ref{propconvergence}.\\
Notice that the limiting process $\bar{X}$ is necessarily almost surely continuous as
\begin{equation}
\sup_{t\in \R_+}\sup_{\varphi,\,\|\varphi\|_\infty\leq 1} |\langle \bar{X}^n_t,\varphi\rangle -\langle \bar{X}^n_{t_-},\varphi\rangle |\leq \frac{1}{n}.\label{la_limite_est_continue}
\end{equation}

For the proof of Theorem \ref{propconvergence}, we will need the following Proposition, which establishes the uniqueness of the solution of
(\ref{martingalesecondcas})-(\ref{crochetmartingalesecondcas}). Since the limiting process takes its values in $\Co([0,T],\mathcal{M}_F(\Co(\R_+,\R^d)))$, we will use the test functions in \eqref{testfunction:dawson} instead of \eqref{testfunction}.

\begin{prop}\label{prop_unique}
(i) The solutions of the martingale problem \eqref{martingalesecondcas}-\eqref{crochetmartingalesecondcas} also solve the following martingale problem, where $\varphi$ is a test function of the form \eqref{testfunction:dawson} and $\widetilde{\Delta}$ has been defined in \eqref{laplacien}:
\begin{multline}
M^{\varphi}_t=  \langle \bar{X}_t,\varphi\rangle -\langle \bar{X}_0,\varphi\rangle-\int_0^t \int_{\D_{\R^d}} \Big(
p\, r(s,y)\frac{\sigma^2}{2}\ \widetilde{\Delta}\varphi(s,y)
+\gamma(s,y,\bar{X}^s)\varphi(y)\Big)\bar{X}_s(dy)\, ds\label{martingalecasdawson}
\end{multline}is a square integrable martingale with quadratic variation
\begin{align}
\langle M^\varphi\rangle_t= \int_0^t \int_{\D_{\R^d}}2\,r(s,y) \varphi^2(y)\bar{X}_s(dy)\, ds.\label{crochetmartingalecasdawson}
\end{align}
(ii) There exists a unique solution to the martingale problem \eqref{martingalecasdawson}-\eqref{crochetmartingalecasdawson}.\\
(iii) There exists a unique solution to the martingale problem \eqref{martingalesecondcas}-\eqref{crochetmartingalesecondcas}.
\end{prop}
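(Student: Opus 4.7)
We handle the three parts in the natural order.

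For (i), I first observe that any limit point $\bar{X}$ is almost surely supported on continuous paths: in the tightness argument (Proposition \ref{proptightness}, Step 6), the compact set $K \subset \D_{\R^d}$ controlling the support of $\bar{X}_t$ is built from the trajectories of a pruned Yule tree, whose individual jumps at level $n$ have Gaussian sizes of variance $\sigma^2/n \to 0$; letting $n \to \infty$ one may take $K$ inside $\Co(\R_+, \R^d)$. Lemma \ref{lien:fonctionstest} then gives, for each test function $\varphi$ of form \eqref{testfunction:dawson}, a sequence $(\varphi_q)$ of form \eqref{testfunction} such that $\varphi_q(y) \to \varphi(y)$ and $\mathcal{D}^2 \varphi_q(t, y) \to \widetilde{\Delta}\varphi(t, y)$ pointwise on $\Co(\R_+, \R^d)$, both uniformly bounded in $q, t, y$. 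Writing \eqref{martingalesecondcas} with $\varphi_q$ in place of $G_g$ and sending $q \to \infty$ by dominated convergence---justified by the boundedness of $b, d, r$ and by the moment bound $\E[\sup_{s \leq T}\langle \bar{X}_s, 1\rangle^p] < +\infty$ inherited from Proposition \ref{prop_usefulresults} by Fatou---yields \eqref{martingalecasdawson}. The passage for the quadratic variation \eqref{crochetmartingalecasdawson} is identical.

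For (ii), I argue by a Picard-type fixed-point strategy. Setting $\varphi = 1$ in \eqref{martingalecasdawson} and applying Gronwall's inequality gives the a priori bound $\E[\sup_{s \leq T}\langle \bar{X}_s, 1\rangle^p] \leq C_{p, T}$ on any solution $\bar{X}$. To any $\mu \in \Co([0,T], \MF(\Co(\R_+, \R^d)))$ with bounded mass I associate the \emph{linearized} martingale problem obtained from \eqref{martingalecasdawson}-\eqref{crochetmartingalecasdawson} by replacing $\gamma(s, y, \bar{X}^s)$ with $\gamma(s, y, \mu^s) = b(s, y) - d(s, y, \mu^s)$. This linearization is a historical super-Brownian motion with bounded space-time dependent branching and killing mechanism, whose law is uniquely characterized by its log-Laplace equation (Dynkin-Perkins; see Perkins \cite{perkinsAMS} and Dawson \cite{dawson}). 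A solution of \eqref{martingalecasdawson}-\eqref{crochetmartingalecasdawson} is precisely a fixed point $\mu = \bar{X}^\mu$, and uniqueness is obtained by comparing two candidate solutions $\bar{X}^1, \bar{X}^2$ through their log-Laplace functionals $v^i_t(\varphi) := -\log \E[\exp(-\langle \bar{X}^i_t, \varphi\rangle)]$ on the measure-determining class \eqref{testfunction:dawson}: the integral equations they satisfy differ only through the term $d(s, y, \bar{X}^{i,s})$, and the boundedness of $U, D, r$ together with the moment bound closes the Gronwall estimate, first on a short interval and then by iteration on all of $[0, T]$.

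For (iii), if $\bar{X}^1$ and $\bar{X}^2$ both solve \eqref{martingalesecondcas}-\eqref{crochetmartingalesecondcas}, then by (i) both solve \eqref{martingalecasdawson}-\eqref{crochetmartingalecasdawson}, hence by (ii) they share the same law; existence is provided by any subsequential limit of $(X^n)$ furnished by Proposition \ref{proptightness}, which must be a solution of \eqref{martingalesecondcas}-\eqref{crochetmartingalesecondcas} by the identification step implicit in Theorem \ref{propconvergence}.

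\emph{Main obstacle.} The Gronwall step of part (ii) is the delicate point: the nonlinearity $d(s, y, \bar{X}^s)$ depends on $\bar{X}$ over the entire past $[0, s]$ and is integrated against $\bar{X}_s$ itself in the drift, so first-moment measures do not close. The argument must therefore be carried out at the level of the log-Laplace functional (equivalently, with a joint control of all moment measures). Boundedness of $U$ is precisely what enables the short-time contraction, which is then iterated to cover $[0, T]$.
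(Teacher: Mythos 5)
Your parts (i) and (iii) follow essentially the paper's route: approximate $\varphi$ of the form \eqref{testfunction:dawson} by functions $\varphi_q=G_{g_q}$ of the form \eqref{testfunction} via Lemma \ref{lien:fonctionstest}, pass to the limit in $L^1$ using the moment bounds, and deduce (iii) from (i), (ii) and the existence supplied by tightness. These parts are fine.

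Part (ii), however, has a genuine gap. Your linearization-plus-fixed-point scheme is the McKean--Vlasov strategy, which is adapted to interactions through the \emph{law} of the process. Here the death rate $d(s,y,\bar{X}^s)$ depends on the \emph{realization} of the random measure path $\bar{X}^s$, not on its distribution. Consequently the fixed-point identity $\mu=\bar{X}^{\mu}$ equates a deterministic path of measures with a random one and is not well posed, and a solution of the nonlinear problem is not a solution of your linearized problem for any deterministic choice of $\mu$. More importantly, the Gronwall step cannot be set up as you describe: the functional $v^i_t(\varphi)=-\log\E\left[\exp(-\langle\bar{X}^i_t,\varphi\rangle)\right]$ satisfies a closed integral (log-Laplace) equation only when the branching property holds, and the whole point of this model --- stated explicitly in the introduction --- is that the interaction destroys the branching property, ``impeding the use of Laplace's transform arguments.'' There is no closed equation for $v^i$ in which two candidate solutions ``differ only through the term $d(s,y,\bar{X}^{i,s})$,'' so the contraction you invoke has nothing to contract.

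The missing idea is the Dawson--Girsanov theorem (Evans--Perkins): since $\gamma$ is bounded and $\E\big(\sup_{t\le T}\langle\bar{X}_t,1\rangle^2\big)<+\infty$, one can pass to an equivalent probability measure $\Q$ under which the drift term $\gamma(s,y,\bar{X}^s)$ disappears from \eqref{martingalecasdawson} while the quadratic variation \eqref{crochetmartingalecasdawson} is unchanged. Under $\Q$ the process is a genuine non-interacting historical superprocess over the path-process $W$ of the diffusion \eqref{def:Yt}; its Laplace functional is then characterized by the cumulant equation \eqref{eq:cumulant}, whose uniqueness follows from Dynkin's generalized Gronwall inequality, and uniqueness transfers back to $\P$ by equivalence of measures. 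Your log-Laplace computation is the right tool, but only \emph{after} the interaction has been removed by the change of measure, not as a means of controlling it.
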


\noindent In the course of the proof, we will need the following lemma, whose proof uses standard arguments with $r(t,y)$ depending on all the trajectory (see \eqref{def:R}).
\begin{lemme}\label{lemme-SDE}
Let us consider the following stochastic differential equation on $\R^d$ driven by a standard Brownian motion $B$:
\begin{equation}
 Y_t= Y_0+\int_0^t \sqrt{\sigma^2 p \, r(s,Y^s)}dB_s.\label{def:Yt}
\end{equation}There exists a unique solution to \eqref{def:Yt}.
\end{lemme}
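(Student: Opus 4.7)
The plan is to run a standard Picard iteration on continuous adapted processes, exploiting that the diffusion coefficient $\sigma(s,y):=\sqrt{\sigma^2 p\, r(s,y^s)}$ is bounded and path-Lipschitz. Set
$$\Phi(s,y)=\int_0^s y_{s-u}\,\nu_r(du),\qquad s\in[0,T],$$
so that $r(s,y)=R(\Phi(s,y))$. Since $\nu_r$ is Radon, $\nu_r([0,s])<\infty$ for each $s$, and for $y,y'\in\D_{\R^d}$,
$$|\Phi(s,y)-\Phi(s,y')|\leq \nu_r([0,s])\,\sup_{u\leq s}|y_u-y'_u|.$$
Combined with the standing assumption that $R^{1/2}$ is Lipschitz and that $\underline R\leq R\leq \bar R$, this gives a bounded, path-Lipschitz diffusion coefficient:
$$|\sigma(s,y)-\sigma(s,y')|\leq K_T\,\sup_{u\leq s}|y_u-y'_u|,\qquad 0\leq s\leq T,$$
with $K_T=\sigma\sqrt{p}\,\mathrm{Lip}(R^{1/2})\,\nu_r([0,T])$.

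For existence, I would define $Y^{(0)}_t:=Y_0$ and iteratively
$$Y^{(n+1)}_t:=Y_0+\int_0^t \sigma(s,(Y^{(n)})^s)\,dB_s.$$
Boundedness of $\sigma$ ensures each $Y^{(n)}$ is a well-defined continuous martingale with $\E[\sup_{t\leq T}|Y^{(n)}_t|^2]<\infty$. Applying Doob-BDG and the path-Lipschitz estimate,
$$\E\Big[\sup_{t\leq T}|Y^{(n+1)}_t-Y^{(n)}_t|^2\Big]\leq 4K_T^2\int_0^T \E\Big[\sup_{u\leq s}|Y^{(n)}_u-Y^{(n-1)}_u|^2\Big]\,ds.$$
Iterating yields, by the usual factorial bound, a summable series in the norm $\big(\E\sup_{t\leq T}|\cdot|^2\big)^{1/2}$, so $(Y^{(n)})$ converges to a continuous adapted limit $Y$. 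Passing to the limit in the Picard recursion (using continuity of the stochastic integral together with boundedness of $\sigma$ and the path-Lipschitz bound to control $\int_0^t|\sigma(s,(Y^{(n)})^s)-\sigma(s,Y^s)|^2ds$) shows that $Y$ satisfies \eqref{def:Yt}.

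For uniqueness, if $Y$ and $\tilde Y$ are two solutions driven by the same $B$ with the same initial condition, the same BDG-plus-Lipschitz computation gives
$$\E\Big[\sup_{t\leq T}|Y_t-\tilde Y_t|^2\Big]\leq 4K_T^2\int_0^T \E\Big[\sup_{u\leq s}|Y_u-\tilde Y_u|^2\Big]\,ds,$$
and Gronwall's lemma forces $Y\equiv \tilde Y$ on $[0,T]$; since $T$ is arbitrary this yields pathwise (hence in particular weak) uniqueness.

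The only subtlety worth flagging is that the coefficient depends on the whole past trajectory through $\Phi(s,y)$, so both the Picard contraction and the Gronwall step have to be run with suprema $\sup_{u\leq s}|\cdot|$ rather than pointwise values. This is precisely what Doob-BDG provides on the left-hand side, so once the path-Lipschitz property of $\sigma$ is in hand the argument proceeds exactly as in the classical Lipschitz SDE case, with no further difficulty.
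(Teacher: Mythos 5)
Your proof is correct and is precisely the ``standard argument'' the paper invokes without detailing: the paper only remarks that the lemma follows from classical arguments adapted to a coefficient $r(t,y)$ depending on the whole trajectory, and your Picard iteration with the path-Lipschitz bound $|\sigma(s,y)-\sigma(s,y')|\leq K_T\sup_{u\leq s}|y_u-y'_u|$ (coming from $\nu_r$ being finite on compacts and $R^{1/2}$ Lipschitz, the latter guaranteed by $R$ Lipschitz and bounded below by $\underline{R}>0$) is exactly the intended route. The only point you gloss over, which is harmless, is the progressive measurability of $s\mapsto r(s,Y^s)$, which follows from the right-continuity of $s\mapsto\int_0^s Y_{s-u}\,\nu_r(du)$ for continuous $Y$.
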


\medskip
\begin{proof}[Proof of Proposition \ref{prop_unique}]It is clear that (iii) follows from (i) and (ii). \\
Let us begin with the proof of (i).
We consider a function $\varphi$ of the form \eqref{testfunction:dawson} and we assume without restriction that the functions $g_j$'s are positive. Firstly, let us show that (\ref{martingalecasdawson}) defines a martingale. Proceeding as in the proof of Theorem 5.6 in \cite{fourniermeleard}, for $k\in \N^*$, $0\leq s_1\leq \dots s_k<s<t$ and  $\phi_1,\dots \phi_k$
bounded measurable functions on $\mathcal{M}_F(\D_{\R^d})$, we define for $X\in \D(\R_+,\mathcal{M}_F(\D_{\R^d}))$ the function
\begin{multline}
\Psi(X)=\phi_1(X_{s_1})\dots \phi_k(X_{s_k})\Big\{\langle X_t,\varphi\rangle-\langle X_s,\varphi\rangle-\int_s^t \int_{\D_{\R^d}}
\big(p r(u,y)\frac{\sigma^2}{2} \widetilde{\Delta}\varphi(y)\\
+\gamma(u,y,\bar{X}^u)\big)\varphi(y)\Big)\bar{X}_u(dy)\, du
\Big\}.
\end{multline}
Let us prove that $\E(\Psi(\bar{X}))=0$. We consider, for $q\in \N^*$, the test functions $\varphi_{q}(y)=G_{g_q}(y)$ with $G(x)=\exp(x)$ and $g_q(s,x)=\sum_{j=1}^m \log(g_j(x))k^q(t_j-s)$ where $k^q(x)$ is the density of the Gaussian distribution with mean 0 and variance $1/q$. From (\ref{martingalesecondcas})-\eqref{crochetmartingalesecondcas} and \eqref{estimee_momentp}, the process
\begin{multline}
M^{\varphi_{q}}_t=  \langle \bar{X}_t,\varphi_{q}\rangle -\langle \bar{X}_0,\varphi_{q}\rangle-\int_0^t \int_{\D_{\R^d}} \Big(
p\, r(s,y)\frac{\sigma^2}{2}\ \mathcal{D}^2\varphi_{q}(s,y)\\
+\gamma(s,y,\bar{X}^s)\varphi_{q}(y)\Big)\bar{X}_s(dy)\, ds\label{etape33}
\end{multline}is a square integrable martingale, hence uniformly integrable. The latter property together with Lemma \ref{lien:fonctionstest} implies that $\phi_1(X_{s_1})\dots \phi_k(X_{s_k})(M^{\varphi_q}_t-M^{\varphi_q}_s)$ converges to $\Psi(\bar{X})$ in $L^1$ and that $\E(\Psi(\bar{X}))=0$.\\
Let us show that the bracket of $M^\varphi$ is given by \eqref{crochetmartingalecasdawson}. By a similar argument as previously, we firstly check that the  process
\begin{multline}
\langle \bar{X}_t,\varphi\rangle^2-\langle \bar{X}_0,\varphi\rangle^2 -\int_0^t \int_{\D} 2r(s,y) \varphi^2(y) \bar{X}_s(dy)ds-  \int_0^t 2 \langle \bar{X}_s,\varphi\rangle
\int_{\D}\Big(p r(s,y)\frac{\sigma^2}{2}\widetilde{\Delta}\varphi(y)\\
  +\gamma(s,y,\bar{X}^s)\varphi(y)\Big)\bar{X}_s(dy)\, ds
\label{etape25}
\end{multline}
is a martingale. In another way, using Itô's formula and (\ref{martingalecasdawson}),
\begin{multline}
\langle \bar{X}_t,\varphi\rangle^2-\langle \bar{X}_0,\varphi\rangle^2 - \langle M^\varphi\rangle_t - \int_0^t 2 \langle \bar{X}_s,\varphi\rangle
\int_{\D}\Big(p r(s,y)\frac{\sigma^2}{2}\widetilde{\Delta}\varphi(y)\\
 +\gamma(s,y,\bar{X}^s)\varphi(y)\Big)\bar{X}_s(dy)\, ds
\label{etape26}
\end{multline}is a martingale. Comparing (\ref{etape25}) and (\ref{etape26}), we obtain (\ref{crochetmartingalecasdawson}).\\

\noindent Let us now prove (ii). Let $\P$ be a solution of the martingale problem \eqref{martingalesecondcas}-\eqref{crochetmartingalesecondcas} and let $\bar{X}$ be the canonical process on $\Co(\R_+,\mathcal{M}_F(\D_{\R^d}))$. We first use Dawson-Girsanov's theorem (see \cite[Section 5]{dawsonGeostocCalculus}, \cite[Theorem 2.3]{evansperkins}) to get rid of the non-linearities. This theorem can be applied since
\begin{align*}
\E\Big(\int_0^T \int_{\D_{\R^d}} \gamma^2(s,y,\bar{X}^s)\bar{X}_s(dy)\ ds\Big)<+\infty.
\end{align*}Indeed, $\gamma$ is bounded and $\E\Big(\sup_{t\in [0,T]}\langle \bar{X}_t,1\rangle^2\Big)<+\infty$ by taking \eqref{estimee_momentp} to the limit.
Hence, there exists a probability measure $\Q$ on $\Co(\R_+,\mathcal{M}_F(\D_{\R^d}))$ equivalent to $\P$ such that under $\Q$, and for every test function $\varphi$ of the form \eqref{testfunction:dawson}, the process
\begin{align}
\widetilde{M}^{\varphi}_t=\langle \bar{X}_t,\varphi\rangle-\langle \bar{X}_0,\varphi\rangle-\int_0^t \int_{\D_{\R^d}}\frac{pr(s,y)\sigma^2}{2}\widetilde{\Delta}\varphi(s,y)\bar{X}_s(dy)ds\label{defMtilde}
\end{align}is a martingale with quadratic variation \eqref{crochetmartingalesecondcas}. Thus, if there is uniqueness of the probability measure $\mathbb{Q}$ which solves the martingale problem \eqref{defMtilde}-\eqref{crochetmartingalesecondcas} we will deduce the uniqueness of the solution of the martingale problem \eqref{martingalesecondcas}-\eqref{crochetmartingalesecondcas}. \\

\noindent Let us now prove that the Laplace transform of $\bar{X}$ under $\Q$ is uniquely characterized using the solution $Y$ of the stochastic differential equation  \eqref{def:Yt}. We associate with $Y$ its path-process $W\in \Co(\R_+,\Co(\R_+,\R^d))$ defined by
\begin{equation}
W_t=(Y_{t\wedge s})_{s\in \R_+}.\label{def:W}
\end{equation}
The path-process $W$ is not homogeneous 
 but it is however a strong Markov process with semigroup defined for all $s\leq t$ and all $\varphi\in \Co_b(\Co(\R_+,\R^d),\R)$ by
\begin{equation}
S_{s,t}\varphi(y)=\E^\mathbb{Q}\big(\varphi(W_t)\, |\, W_s=y^s\big).
\end{equation}Moreover, the infinitesimal generator $\widetilde{A}$ of $W$ at time $t$ is defined for all $\varphi$ as in \eqref{testfunction:dawson} by
\begin{equation}
\widetilde{A}\varphi(t,y)=\frac{p\sigma^2}{2}r(t,y)\widetilde{\Delta}\varphi(t,y).
\end{equation}
Then it can be shown that the log-Laplace functional of $\bar{X}_t$ under the probability $\mathbb{Q}$, $L(s,t,y,\varphi)=\E^\mathbb{Q}\big(\exp(-\langle \bar{X}_t,\varphi\rangle)\, |\, \bar{X}_s=\delta_{y^s}\big)$ satisfies
\begin{equation}
L(s,t,y,\varphi)=e^{- V_{s,t}\varphi(y)},
\end{equation}where $V_{s,t}\varphi(y)$ solves:
\begin{align}
V_{s,t}\varphi(y)= & \E\Big(\varphi(W_t)-\int_s^t \frac{p\sigma^2}{2}r(u,W_u)\big(V_{u,t}\varphi(W_u)\big)^2 du \, |\, W_s=y^s\Big)\nonumber\\
= & S_{s,t}\varphi(y)-\int_s^t \frac{p\sigma^2}{2}S_{s,u}\Big(r(u,.)\big(V_{u,t}\varphi(.)\big)^2\Big)(y) du.\label{eq:cumulant}
\end{align}
Adapting Theorem 12.3.1.1 of \cite[p.207]{dawson}, there exists a unique solution to \eqref{eq:cumulant}.
Indeed, let $V^1$ and $V^2$ be two solutions. From \eqref{eq:cumulant}, we see that for $i\in \{1,2\}$,
\begin{equation}
\sup_{s,t,y}|V^i_{s,t}\varphi(y)|\leq \sup_{y}|\varphi(y)|=\| \varphi\|_\infty.
\end{equation}We have
\begin{align*}
|V^2_{s,t}\varphi(y)-V^1_{s,t}\varphi(y)|= & \Big|\frac{p\sigma^2}{2} \int_s^t S_{s,u}\Big(r(u,.)\big((V^2_{u,t}\varphi(.))^2-(V^1_{u,t}\varphi(.))^2\big)\Big)(y) du\Big|\nonumber\\
\leq & \frac{p\sigma^2}{2} \int_s^t S_{s,u}\Big(r(u,.)2\| \varphi\|_\infty \big|V^2_{u,t}\varphi(.)-V^1_{u,t}\varphi(.)\big|\Big)(y) du\nonumber\\
\leq & p\sigma^2\| \varphi\|_\infty\bar{R}\int_s^t S_{s,u}\Big(\big|V^2_{u,t}\varphi-V^1_{u,t}\varphi\big|\Big)(y) du.
\end{align*} Uniqueness follows from the  Dynkin's generalized Gronwall inequality (see \eg \cite[Lemma 4.3.1]{dawson}).\\
 In conclusion, the Laplace transform of $\bar{X}_t$ is uniquely characterized for every $t>0$ by\\ $
\E_{\bar{X}_0}\big(\exp(-\langle \bar{X}_t,\varphi\rangle)\big)=\exp(-\langle \bar{X}_0,V_{0,t}\varphi\rangle)$.
Thus, the one-marginal distributions of the martingale problem \eqref{defMtilde}-\eqref{crochetmartingalesecondcas} are uniquely determined and thus, there exists a unique solution to \eqref{defMtilde}-\eqref{crochetmartingalesecondcas}.
\end{proof}

\bigskip
\noindent It is now time to turn to the
\begin{proof}[Proof of Theorem \ref{propconvergence}]
Let $\bar{X}\in \Co(\R_+,\mathcal{M}_F(\D_{\R^d}))$ be a limiting process of the sequence $(\bar{X}^n)_{n\in \N^*}$ and let us denote  again by $(\bar{X}^n)_{n\in \N^*}$
the subsequence that converges in law to $\bar{X}$. Since the limiting process is continuous, the convergence holds in $\D(\R_+,\mathcal{M}_F(\D_{\R^d}))$
embedded with the Skorohod topology, but also with the uniform topology for all $T>0$ (\eg \cite{billingsley}).\\
The aim is to identify the martingale problem solved by the limiting value $\bar{X}$. We will see that it satisfies \eqref{martingalesecondcas}-\eqref{crochetmartingalesecondcas} which admits a unique solution by Proposition \ref{prop_unique}. This will conclude the proof.\\
Firstly, we show that (\ref{martingalesecondcas}) defines a martingale. For $k\in \N^*$, let  $0\leq s_1\leq \dots s_k<s<t$ and let $\phi_1,\dots \phi_k$ be
bounded measurable functions on $\mathcal{M}_F(\D_{\R^d})$. Let $G \in \Co^2_b(\R,\R)$, $g\in \Co^{0,2}_b(\R_+\times \R^d,\R)$ and $G_g$ be functions as in \eqref{testfunction}. We define for $X\in \D(\R_+,\mathcal{M}_F(\D_{\R^d}))$ the function
\begin{multline}
\Phi(X)=\phi_1(X_{s_1})\dots \phi_k(X_{s_k})\Big\{\langle X_t,G_g\rangle-\langle X_s,G_g\rangle-\int_s^t \int_{\D_{\R^d}}
\Big(p r(u,y)\frac{\sigma^2}{2} \mathcal{D}^2G_g(u,y)\\
+\gamma(u,y, X^u)G_g(y)\Big)X_u(dy)\, du
\Big\}.
\end{multline}Let us prove that $\E(\Phi(\bar{X}))=0$. From (\ref{martingalesecondcas}),
\begin{align}
0= & \E\Big(\phi_1(\bar{X}^n_{s_1})\dots \phi_k(\bar{X}^n_{s_k})\big(M^{n,G_g}_t-M^{n,G_g}_s\big)\Big)\nonumber\\
= & \E\big(\Phi(\bar{X}^n)\big)
+\E\Big(\phi_1(\bar{X}^n_{s_1})\dots \phi_k(\bar{X}^n_{s_k})\big(A_n+B_n\big)\Big),\label{etape1}
\end{align}where
\begin{align}
A_n= & \int_s^t \int_{\D_{\R^d}} r(u,y) \Big\{n \Big(\int_{\R^d}G_g(y|u|y_u+h)K^n(dh)-G_g(y)\Big)\label{def:AnBn}\\
 &\qquad  \qquad \qquad - \frac{p\sigma^2}{2} \mathcal{D}^2G_g(u,y)  \Big\}\bar{X}^n_u(dy)\, du\nonumber\\
B_n=  & \int_s^t\int_{\D_{\R^d}}
b(u,y)\int_{\R^d}\Big(G_g(y|u|y_u+h)-G_g(y) \Big) K^n(dh)\,\bar{X}^n_u(dy)\, du.\nonumber
\end{align}
As $\bar{X}$ is continuous, $\Phi$ is a.s. continuous at $\bar{X}$. Moreover, from
$|\Phi(X)|\leq C\big(\sup_{s\leq t}\langle X_s,1\rangle+\sup_{s\leq t}\langle X_s,1\rangle^2\big)$ and Prop. \ref{prop_usefulresults}, we deduce that $(\Phi(\bar{X}^n))_{n\in \N^*}$ is a uniformly integrable sequence such that
\begin{align}
\lim_{n\rightarrow +\infty}\E\big(\Phi(\bar{X^n})\big)=\E\big(\Phi(\bar{X})\big).\label{etape4}
\end{align}
Estimates \eqref{estimee_momentp} and \eqref{limitD2} imply that $A_n$ and $B_n$ defined in \eqref{def:AnBn} converge in $L^1$ to zero. Hence \eqref{etape1} and \eqref{etape4} entail the desired result. The computation of the bracket of the martingale $M^{G_g}$ is standard (see for instance \cite{fourniermeleard}).\\
 The proof  of Theorem \ref{propconvergence} is now complete.
\end{proof}

\section{Distributions of the genealogies}

An important question is to gather information on the lineages of individuals alive in the population.

\medskip \noindent
Firstly, remark in Proposition \ref{prop_unique} we obtain a martingale problem introduced by Perkins \cite[Th. 5.1 p.64]{perkinsAMS} and leading to the following representation result: under $\Q$, $(\bar{X}_t)_{t\in \R_+}$ has the same distribution as the historical superprocess $(K_t)_{t\in \R_+}$ which is the unique solution of
\begin{align}
& Y_t(y)= Y_0(y)+ \int_0^t \sqrt{\sigma^2 p r(s,Y^s(y))}dy_s\\
& K_0=\bar{X}_0,\qquad \langle K_t,\varphi\rangle= \int_{\D_{\R^d}}\varphi(Y(y)^t)H_t(dy)\label{perkinsrepresentation}
\end{align}where $(H_t(dy))_{t\in \R_+}$ is under $\mathbb{Q}$ a historical Brownian superprocess (see \cite{dawsonperkinsAMS}), and for $\varphi$ in a sufficiently large class of test functions, of the form \eqref{testfunction:dawson} for instance. The stochastic integral $\int_0^t \sqrt{\sigma^2 p r(s,Y^s(y))}dy_s$ is the H-historical integral constructed in \cite[Th. 2.12]{perkinsAMS}.

\subsection{Lineages drawn at random}
For $t>0$, $\bar{X}_t$ is a random measure on $\D_{\R^d}$ and its restriction to $\D([0,t],\R^d)$ correctly renormalized gives the distribution of the lineage of an individual chosen at random at time $t$. Let us define $\mu_t(dy)=\bar{X}_t(dy)/\langle \bar{X}_t,1\rangle$ such that for any measurable test function $\varphi$ on $\D_{\R^d}$:
\begin{equation}
\langle \mu_t,\varphi\rangle=\frac{\langle \bar{X}_t,\varphi\rangle}{\langle \bar{X}_t,1\rangle}.\label{def:mu_t}
\end{equation}
For instance, choosing $\varphi(y)=\ind_{A}(y)$ for a measurable subset $A\subset \D_{\R^d}$, we obtain the proportion of paths belonging to $A$ under the random probability measure $\mu_t$.
Studying such random probability measure remains unfortunately a difficult task and we will also consider its intensity probability measure $\E\mu_t$ defined for any test function $\varphi$ as $\langle \E\mu_t,\varphi\rangle=\E(\langle \mu_t,\varphi\rangle)$. This approach has been used in cases where the branching property holds (for instance in \cite{bansayedelmasmarsalletran}). The hope is to identify the infinitesimal generator of Markov processes that will give us the average distribution of a path chosen at random.

\begin{prop}For $t>0$, a test function $\varphi$ as in \eqref{testfunction:dawson} and $\mu_t$ defined in \eqref{def:mu_t}:
\begin{align}
\E\big(\langle \mu_t,\varphi\rangle\big)= & \E\big(\langle \mu_0,\varphi\rangle\big)+\E\Big(\int_0^t \big\langle \mu_s, \frac{pr(s,.)\sigma^2}{2} \widetilde{\Delta}\varphi(s,.)\big\rangle ds\Big)\nonumber\\
 +& \E\Big(\int_0^t \big(\langle \mu_s,\gamma(s,.,\bar{X}^s)\varphi(.)\rangle-\langle \mu_s,\varphi\rangle \langle \mu_s, \gamma(s,.,\bar{X}^s)\rangle \big) ds\Big)\nonumber\\
 + & \E\Big(\int_0^t \frac{2}{\langle \bar{X}_s,1\rangle}\big(\langle \mu_s,\varphi\rangle \langle \mu_s,r(s,.)\rangle - \langle \mu_s,r(s,.)\varphi(.)\rangle \big) ds\Big)\label{eq:Emu_t}
\end{align}
\end{prop}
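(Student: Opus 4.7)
The plan is to apply It\^o's formula to the ratio $\langle \mu_t,\varphi\rangle = \langle \bar{X}_t,\varphi\rangle/\langle \bar{X}_t,1\rangle$ and then take expectations. First, I extract the semimartingale decompositions of numerator and denominator from the martingale problem \eqref{martingalecasdawson}--\eqref{crochetmartingalecasdawson} of Proposition \ref{prop_unique}. Setting $X_t=\langle \bar{X}_t,\varphi\rangle$ and $Y_t=\langle \bar{X}_t,1\rangle$, the denominator (taking the test function equal to $1$, for which $\widetilde{\Delta} 1 =0$) satisfies
\[
dY_t = \int_{\D_{\R^d}} \gamma(t,y,\bar{X}^t)\,\bar{X}_t(dy)\,dt + dM^1_t, \qquad d\langle M^1\rangle_t = 2\int_{\D_{\R^d}} r(t,y)\,\bar{X}_t(dy)\,dt,
\]
while $X_t$ has drift $\int(pr\sigma^2/2\,\widetilde{\Delta}\varphi + \gamma\varphi)\bar{X}_t(dy)$ and bracket $2\int r\varphi^2\bar{X}_t(dy)\,dt$. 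The joint bracket $d\langle M^\varphi,M^1\rangle_t = 2\int r(t,y)\varphi(y)\bar{X}_t(dy)\,dt$ follows from the standard polarization identity applied to $\langle M^{\varphi+1}\rangle$, using once more \eqref{crochetmartingalecasdawson}.

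Second, I apply It\^o's formula to $f(x,y)=x/y$. With $\partial_x f = 1/y$, $\partial_y f = -x/y^2$, $\partial_{xy} f = -1/y^2$, $\partial_{yy} f = 2x/y^3$, I get
\[
d\Big(\tfrac{X_t}{Y_t}\Big) = \tfrac{1}{Y_t}dX_t - \tfrac{X_t}{Y_t^2}dY_t - \tfrac{1}{Y_t^2}d\langle M^\varphi,M^1\rangle_t + \tfrac{X_t}{Y_t^3}d\langle M^1\rangle_t.
\]
Plugging in the four ingredients from Step~1 and recognizing $\int(\cdot)\bar{X}_t(dy)/Y_t = \langle\mu_t,\cdot\rangle$, the first two (drift) contributions yield $\langle \mu_t, pr\sigma^2/2\,\widetilde{\Delta}\varphi + \gamma\varphi\rangle - \langle\mu_t,\varphi\rangle\langle\mu_t,\gamma\rangle$, and the last two (bracket) contributions combine into $\frac{2}{Y_t}\big(\langle\mu_t,\varphi\rangle\langle\mu_t,r\rangle - \langle\mu_t,r\varphi\rangle\big)$, which is exactly the integrand of \eqref{eq:Emu_t}. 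The remaining terms $\int_0^t (1/Y_s)dM^\varphi_s$ and $\int_0^t (X_s/Y_s^2)dM^1_s$ form a local martingale.

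Third, I take expectations. The boundedness of $\varphi$ ensures $|\langle\mu_s,\varphi\rangle|\leq \|\varphi\|_\infty$, and $\gamma,r$ are bounded by hypothesis, so every integrand appearing in \eqref{eq:Emu_t} is controlled except for the factor $1/\langle\bar{X}_s,1\rangle$ in the last line. This is where the main obstacle lies: since $f(x,y)=x/y$ is singular at $y=0$ and the superprocess may become extinct, a direct application of It\^o's formula and Fubini is not legitimate on $\{Y_s=0\}$. I handle this by localization along the stopping times $\tau_\delta = \inf\{t\geq 0 : \langle \bar{X}_t,1\rangle \leq \delta\}$: on $[0,t\wedge\tau_\delta]$ all coefficients involving $1/Y_s$ are bounded by $1/\delta$, so the local martingale becomes a true martingale (using the moment estimate \eqref{estimee_momentp} passed to the limit to control $\sup_{s\leq t}Y_s$), its expectation vanishes, and \eqref{eq:Emu_t} holds with $\tau_\delta\wedge t$ in place of $t$. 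Passing to $\delta\downarrow 0$, the left-hand side and the first two drift terms are controlled by $\|\varphi\|_\infty$ and bounded by dominated convergence; the delicate last term is harmless because, by the convention $\mu_s=0$ on extinction, its integrand vanishes once $Y_s=0$, and on $\{Y_s>0\}$ monotone convergence applies. This yields \eqref{eq:Emu_t} as stated.
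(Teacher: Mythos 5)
Your proof follows exactly the paper's route: It\^o's formula applied to the ratio $\langle \bar{X}_t,\varphi\rangle/\langle \bar{X}_t,1\rangle$, using the drift and bracket structure of the limiting martingale problem \eqref{martingalecasdawson}--\eqref{crochetmartingalecasdawson} (with the cross-bracket $\langle M^{\varphi},M^{1}\rangle_t=\int_0^t\int 2r(s,y)\varphi(y)\bar{X}_s(dy)ds$ obtained by polarization), and your four It\^o contributions reproduce precisely the three lines of \eqref{eq:Emu_t}. Your additional localization at $\tau_\delta=\inf\{t:\langle\bar{X}_t,1\rangle\le\delta\}$ to handle the singularity at extinction is a refinement the paper silently omits (it applies It\^o and Fubini without comment on $\{\langle\bar{X}_s,1\rangle=0\}$), so the proposal is, if anything, slightly more careful than the original.
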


Remark that the moment equation \eqref{eq:Emu_t} cannot be closed due to the nonlinearity.

\begin{proof}
We consider \eqref{martingalesecondcas}-\eqref{crochetmartingalesecondcas} and Itô's formula:
\begin{align*}
\E\big(\langle  & \mu_t,\varphi\rangle\big)=
\langle \mu_0,\varphi\rangle+\E\Big(\int_0^t \int_{\D_{\R^d}} \frac{\frac{1}{2}pr(s,y)\sigma^2 \widetilde{\Delta}\varphi(s,y)+\gamma(s,y,\bar{X}^s)\varphi(y)}{\langle \bar{X}_s,1\rangle} \bar{X}_s(dy)\,ds\Big)\\
- & \E\Big(\int_0^t \int_{\D_{\R^d}} \frac{\langle \bar{X}_s,\varphi\rangle}{\langle \bar{X}_s,1\rangle^2}\gamma(s,y,\bar{X}^s)\bar{X}_s(dy)\,ds\Big)\\
+ & \E\Big(\frac{1}{2} \Big[\int_0^t \int_{\D_{\R^d}} \frac{2\langle \bar{X}_s,\varphi\rangle}{\langle \bar{X}_s,1\rangle^3}2r(s,y)  \bar{X}_s(dy)\,ds
-2\int_0^t \int_{\D_{\R^d}} \frac{1}{\langle \bar{X}_s,1\rangle^2}2r(s,y) \varphi(y)\bar{X}_s(dy)\, ds\Big]\Big)\\
= & \E\big(\langle \mu_0,\varphi\rangle\big)+\E\Big(\int_0^t \big\langle  \mu_s, \frac{1}{2}pr(s,.)\sigma^2 \widetilde{\Delta}\varphi(s,.)+\gamma(s,.,\bar{X}_s)\varphi(.)\big\rangle ds\Big)\\
+ &   \E\Big(\int_0^t \langle \mu_s,\varphi\rangle \langle \mu_s, \gamma(s,.,\bar{X}^s)\rangle ds\Big)
+  \E\Big(\int_0^t \frac{\langle \mu_s,\varphi\rangle \langle \mu_s, 2r(s,.)\rangle}{\langle \bar{X}_s,1\rangle}ds
-\int_0^t  \frac{\langle \mu_s,2r(s,.) \varphi\rangle}{\langle \bar{X}_s,1\rangle} ds\Big)
\end{align*}This ends the proof.
\end{proof}

\medskip \noindent
In \eqref{eq:Emu_t}, we recognize two covariance terms under the probability measure $\mu_s$:
\begin{align}
& \Cov_{\mu_s}(\gamma(s,.,\bar{X}_s),\varphi)=\langle \mu_s,\gamma(s,.,\bar{X}_s)\varphi(.)\rangle-\langle \mu_s,\varphi\rangle \langle \mu_s, \gamma(s,.,\bar{X}_s)\rangle \nonumber\\
& \Cov_{\mu_s}(r(s,.),\varphi)= \langle \mu_s,r(s,.)\varphi(.)\rangle-\langle \mu_s,\varphi\rangle \langle \mu_s,r(s,.)\rangle.\label{termes-cov}
\end{align}
The covariance terms \eqref{termes-cov} can be viewed as the generators of jump terms. For example,
\begin{align*}
\Cov_{\mu_s}(\gamma(s,.,\bar{X}_s),\varphi)=\langle \mu_s,\gamma(s,.,\bar{X}_s)\rangle \int_{\D_{\R^d}}\Big(\int_{\D_{\R^d}} \varphi(z) \frac{\gamma(s,z,\bar{X}_s)\mu_s(dz)}{\langle \mu_s,\gamma(s,.,\bar{X}_s)\rangle} - \varphi(y)\Big)\mu_s(dy).
\end{align*}Heuristically, the distribution of the path of a particle chosen at random at time $t$ is as follows. We consider diffusive particles with generator $pr(s,.)\sigma^2 \widetilde{\Delta}$. At rate $2\langle \mu_s,r(s,.)\rangle/\langle \bar{X}_s,1\rangle$,  the particles are resampled in the distribution $\mu_s$ biased by the function $r(s,.)$ where individuals with larger allometric coefficients $r(s,.)$ are more likely to be chosen. At rate $\langle \mu_s,\gamma(s,y,\bar{X}^s)\rangle$, particles are resampled in the distribution $\mu_s$ biased by the function $\gamma(s,.,\bar{X}^s)$ which gives more weight to particles having higher growth rate.

\subsection{Case of constant allometric function and growth rate}\label{section:r-gamma-constants}

A particular case is when the allometric function $r(s,y)$ and the growth rate $\gamma(s,y,\bar{X}^s)$ are constant and equal to $\bar{R}$ and $\bar{\gamma}$ respectively. In particular, a null growth rate $\bar{\gamma}=0$ models the ``neutral''case.
 In that case, we will show  that the process of interest corresponds to a Fleming-Viot process with genealogies similar to the one introduced in \cite{grevenpfaffelhuberwinter}. Notice that this bears similarities with the connections established between superprocesses and (classical) Fleming-Viot processes (e.g. Perkins \cite{perkins-SSP}).\\
Let us introduce the notation
\begin{equation}
\langle \mu_t\otimes \mu_t, \chi(t,.,.)\rangle=\int_{\D_{\R^d}}\int_{\D_{\R^d}} \chi(t,y,z)\mu_t(dy)\mu_t(dz).
\end{equation}
This quantity can help to describe the most recent commun ancestor (MRCA) of two individuals chosen at random in the population.

\begin{prop}\label{propFV}If $r(s,y)=\bar{R}$ and $\gamma(s,y,X)=\bar{\gamma}$, then\\
(i)$\Cov_{\mu_s}(r(s,.),\varphi)=0$ and $\Cov_{\mu_s}(\gamma(s,.,\bar{X}_s),\varphi)=0$.\\
(ii) The lineage distributions under $\E(\mu_t)$ are Brownian motions.\\
(iii) Let $\chi(y,z)$ be a function of two variables such that $y\mapsto \chi(y,z)$ and $z\mapsto \chi(y,z)$ are of the form \eqref{testfunction:dawson}. For test functions $\phi(\mu)=\langle \mu\otimes \mu,\chi\rangle/\langle \mu,1\rangle^2$, the generator of $\bar{X}$ becomes
 \begin{align*}
 L^{FV}\phi(X)=\frac{p\bar{R}\sigma^2}{2}\Big\langle \frac{X}{\langle X,1\rangle}\otimes \frac{X}{\langle X,1\rangle}, \widetilde{\Delta}^{(2)}\chi\Big\rangle+\frac{2\bar{R}}{\langle X,1\rangle}\Big(\int_{\D_{\R^d}}\chi(y,y)\frac{X(dy)}{\langle X,1\rangle}-\langle \frac{X}{\langle X,1\rangle}\otimes \frac{X}{\langle X,1\rangle},\chi\rangle\Big),
 \end{align*}where $\widetilde{\Delta}^{(2)}\chi(y,z)=\widetilde{\Delta}(y\mapsto \chi(y,z))+\widetilde{\Delta}(z\mapsto \chi(y,z))$.
\end{prop}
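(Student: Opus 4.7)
For (i), when $r(s,y)\equiv\bar R$ and $\gamma(s,y,X)\equiv\bar\gamma$, the quantities $r(s,\cdot)$ and $\gamma(s,\cdot,\bar X_s)$ are constant in the integration variable, so both covariances under $\mu_s$ vanish by definition. Point (ii) is then a direct consequence: plugging (i) into \eqref{eq:Emu_t}, the two covariance integrals disappear and what remains is
\begin{equation*}
\E\bigl(\langle \mu_t,\varphi\rangle\bigr)=\E\bigl(\langle \mu_0,\varphi\rangle\bigr)+\E\Bigl(\int_0^t \bigl\langle \mu_s, \tfrac{p\bar R\sigma^2}{2}\widetilde{\Delta}\varphi(s,\cdot)\bigr\rangle\,ds\Bigr),
\end{equation*}
which is the Kolmogorov forward equation for a Brownian motion with diffusion coefficient $p\bar R\sigma^2$; hence under $\E(\mu_t)$ the lineages are Brownian (this can be made rigorous by observing that the test functions of the form \eqref{testfunction:dawson} determine the law of a continuous Markov process, as discussed around Lemma \ref{lien:fonctionstest}).

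For (iii), the plan is to apply It\^o's formula to $\phi(\bar X_t)=N(\bar X_t)/D(\bar X_t)$ with $N(X)=\langle X\otimes X,\chi\rangle$ and $D(X)=\langle X,1\rangle^2$. First, reducing by bilinearity to $\chi(y,z)=\chi_1(y)\chi_2(z)$ and writing $\langle X\otimes X,\chi\rangle=\langle X,\chi_1\rangle\langle X,\chi_2\rangle$, the martingale problem \eqref{martingalesecondcas}--\eqref{crochetmartingalesecondcas} together with It\^o's product formula gives
\begin{equation*}
dN(\bar X_t)=\Bigl[\tfrac{p\bar R\sigma^2}{2}\langle \bar X_t\otimes \bar X_t,\widetilde{\Delta}^{(2)}\chi\rangle+2\bar\gamma\,N(\bar X_t)+2\bar R\int_{\D_{\R^d}}\chi(y,y)\bar X_t(dy)\Bigr]dt+dM^N_t,
\end{equation*}
and extending to general $\chi$ by linearity of both sides. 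Taking $\varphi\equiv 1$ in \eqref{martingalesecondcas}--\eqref{crochetmartingalesecondcas} one obtains $d\langle \bar X_t,1\rangle=\bar\gamma\langle \bar X_t,1\rangle dt+dM^1_t$ with $d\langle M^1\rangle_t=2\bar R\langle \bar X_t,1\rangle\,dt$, then another application of It\^o to $f(x)=1/x^2$ yields the semimartingale decomposition of $1/D(\bar X_t)$. The cross variation $d\langle M^N,M^1\rangle_t=4\bar R\,N(\bar X_t)\,dt$ is computed the same way (first for product $\chi$, then by bilinearity).

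Combining these ingredients via $d\phi=N\,d(1/D)+(1/D)\,dN+d\langle N,1/D\rangle$ gives the drift
\begin{align*}
L^{FV}\phi(\bar X_t)=&\;\tfrac{p\bar R\sigma^2}{2}\bigl\langle \mu_t\otimes \mu_t,\widetilde{\Delta}^{(2)}\chi\bigr\rangle+\tfrac{2\bar R}{\langle \bar X_t,1\rangle}\Bigl(\int\chi(y,y)\mu_t(dy)-\langle \mu_t\otimes \mu_t,\chi\rangle\Bigr)\\
&\;+\bigl(\text{terms in }\bar\gamma\bigr),
\end{align*}
and the key algebraic step is that the $\bar\gamma$ contributions from $d(1/D)$ and $dN$ exactly cancel (the former gives $-2\bar\gamma\langle\mu_t\otimes\mu_t,\chi\rangle$, the latter $+2\bar\gamma\langle\mu_t\otimes\mu_t,\chi\rangle$), and the three $\bar R$ terms (from $d(1/D)$, the diagonal contribution in $dN$, and the cross variation $d\langle N,1/D\rangle$) combine with coefficients $6-8+2$ on $\langle\mu_t\otimes\mu_t,\chi\rangle/\langle\bar X_t,1\rangle$ and $\int\chi(y,y)\mu_t(dy)/\langle\bar X_t,1\rangle$ to produce exactly the resampling kernel displayed in the statement.

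The main obstacle is the bookkeeping of the three $\bar R$-contributions and two $\bar\gamma$-contributions so that the correct cancellations occur, and justifying the bilinear extension from $\chi=\chi_1\otimes\chi_2$ to arbitrary $\chi$ by invoking the stability and separating properties established for test functions of the form \eqref{testfunction:dawson} (cf.\ Lemma \ref{lien:fonctionstest}), noting that uniform integrability needed to take limits in It\^o's formula follows from the moment bound $\E(\sup_{t\le T}\langle \bar X_t,1\rangle^2)<\infty$ obtained by passing \eqref{estimee_momentp} to the limit.
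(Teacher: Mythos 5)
Your proposal is correct and follows essentially the same route as the paper: (i) is immediate, (ii) follows by plugging (i) into \eqref{eq:Emu_t}, and (iii) is the It\^o computation for $\langle\mu_t,\varphi\rangle\langle\mu_t,\psi\rangle$ with $\chi=\varphi\otimes\psi$ followed by bilinear extension, which is exactly the paper's \eqref{etape29}; your explicit bookkeeping ($6-8=-2$ on the product term, $+2$ on the diagonal term, cancellation of the $\bar\gamma$ contributions) checks out and reproduces the stated generator.
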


\medskip
\noindent In the expression of the generator $L^{FV}$, we recognize a resampling operator, where the resampling occurs nonlinearly at the  rate $2\bar{R}/\langle X,1\rangle$.

\begin{proof}[Proof of Proposition \ref{propFV}]
The proof of (i) is obvious. To prove (ii), let us remark that for any test function $\varphi$ of the form \eqref{testfunction:dawson}, \eqref{eq:Emu_t} yields
\begin{align*}
\langle \E(\mu_t),\varphi\rangle= \langle \E(\mu_0),\varphi\rangle+\int_0^t \langle \E(\mu_s),p\bar{R}\sigma^2 \widetilde{\Delta}\varphi(s,.)\rangle ds.
\end{align*}Therefore, under $\E(\mu_t)$, the lineages have the same finite-dimensional distributions as Brownian motions with diffusion coefficient $p\bar{R}\sigma^2$.\\
Let us now prove (iii). Let $\varphi$ and $\psi$ be two  functions of the form \eqref{testfunction:dawson}. Using Itô's formula,
\begin{multline}
\langle \mu_t,\varphi\rangle \langle \mu_t,\psi\rangle= \langle \mu_0,\varphi\rangle \langle \mu_0,\psi\rangle+M_t^{\varphi,\psi}\label{etape29}\\
\begin{aligned}
+ & \int_0^t \Big(\frac{p\bar{R}\sigma^2}{2} \langle \mu_s\otimes \mu_s,\widetilde{\Delta}(\varphi \psi)(s,.)\rangle
  +  2\bar{R}  \int_{\R} \frac{(\varphi(y)-\langle \mu_s,\varphi\rangle)(\psi(y)-\langle \mu_s,\psi\rangle)}{\langle \bar{X}_s,1\rangle}\mu_s(dy)\Big)ds,
\end{aligned}
\end{multline}where $M^{\varphi,\psi}$ is a square integrable martingale.\\
For a function $\chi(y,z)=\sum_{k=1}^K \lambda_k \varphi_k(y)\psi_k(z)$ with $K\in \N$ and $\lambda_k\in \R$, we can generalize \eqref{etape29} by noting that
\begin{align*}
& \langle \mu_t\otimes \mu_t,\chi\rangle=\sum_{k=1}^K \lambda_k \langle \mu_t,\varphi_k\rangle \langle \mu_t,\psi_k\rangle,\\
& \widetilde{\Delta}^{(2)} \chi(y,z)=\sum_{k=1}^K \lambda_k \widetilde{\Delta}\varphi_k(y)\psi_k(z)+\sum_{k=1}^K \lambda_k \varphi_k(y)\widetilde{\Delta}\psi_k(z).
\end{align*}
Since every function $\chi$ as in (iii) can be approximated by functions of the previous form for the bounded pointwise topology, the proposition is proved.
\end{proof}

\subsection{Feynman-Kac formula in the case without interaction}

Following the approaches of \cite{bansayedelmasmarsalletran,evanssteinsaltz} in the case without interaction, it is more convenient to renormalize \eqref{def:mu_t} by $\E(\langle \bar{X}_t,1\rangle)$ instead of $\langle \bar{X}_t,1\rangle$. Let us define the random measure $\nu_t$ for a measurable function $\varphi$ on $\D_{\R^d}$ by:
\begin{equation}
\langle \nu_t,\varphi\rangle =\frac{\langle \bar{X}_t,\varphi\rangle}{\E(\langle \bar{X}_t,1\rangle)}.
\end{equation}Contrarily to $\mu_t$, the measure $\nu_t$ is not a probability measure, but its intensity measure $\E\nu_t$ is.

\begin{prop}Assume that $p\sigma^2 \, r(s,y)=1$ for the sake of simplicity. In the case without interaction, that is if $\gamma(s,y,\bar{X}^s)=\gamma(s,y)$, then we have the following Feynman-Kac representation for any test function $\varphi$ as in \eqref{testfunction:dawson}:
\begin{equation}
\langle \E\nu_t,\varphi\rangle=\frac{\E\big(\langle \bar{X}_t,\varphi\rangle \big)}{\E\big(\langle \bar{X}_t,1\rangle \big)}=\frac{\E\Big(\varphi(W_t)e^{\int_0^t \gamma(s,W_s)ds}\Big)}{\E\Big(e^{\int_0^t \gamma(s,W_s)ds}\Big)},\label{FK1}
\end{equation}where $(W_t)_{t\in \R_+}$ is a standard historical Brownian process, with values in $\Co(\R_+,\R^d)$.
\end{prop}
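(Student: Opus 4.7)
My plan is to exploit that, in the absence of interaction, $\gamma(s,y,\bar{X}^s)=\gamma(s,y)$, so the first-moment measure $m_s(dy):=\mathbb{E}\bar{X}_s(dy)$ decouples from the higher moments and obeys a \emph{linear} forward equation, which is then paired via a standard duality with the backward Feynman--Kac equation associated to $W$. Taking expectations in the martingale problem \eqref{martingalecasdawson} (the moment bound \eqref{estimee_momentp} kills the martingale in mean) and using $p\sigma^2 r\equiv 1$, one obtains, for every test function $\varphi$ of the form \eqref{testfunction:dawson},
\begin{equation*}
\langle m_t,\varphi\rangle=\langle m_0,\varphi\rangle+\int_0^t \Big\langle m_s,\tfrac{1}{2}\widetilde{\Delta}\varphi(s,\cdot)+\gamma(s,\cdot)\varphi(\cdot)\Big\rangle\,ds.
\end{equation*}

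Next I introduce the Feynman--Kac function
\begin{equation*}
u(s,y):=\mathbb{E}\!\left[\varphi(W_t)\,\exp\!\Big(\int_s^t \gamma(r,W_r)\,dr\Big)\,\Big|\,W_s=y^s\right],\qquad s\in[0,t],\; y\in\Co(\R_+,\R^d),
\end{equation*}
and appeal to the classical Feynman--Kac theorem (applied to the finitely many coordinates of $W$ on which $\varphi$ depends, cf.~\eqref{laplacien}) to assert that $u$ solves the backward equation $\partial_s u+\tfrac{1}{2}\widetilde{\Delta}u+\gamma u=0$ with terminal condition $u(t,\cdot)=\varphi$. The duality identity is then obtained by differentiating $s\mapsto \langle m_s,u(s,\cdot)\rangle$: the $\partial_s u$ contribution cancels pointwise with the $\langle m_s,\tfrac{1}{2}\widetilde{\Delta}u+\gamma u\rangle$ contribution coming from the forward equation, so this map is constant on $[0,t]$, and comparing $s=0$ with $s=t$ yields
\begin{equation*}
\mathbb{E}\langle \bar{X}_t,\varphi\rangle=\langle \bar{X}_0,u(0,\cdot)\rangle=\int \mathbb{E}_{y_0}\!\left[\varphi(W_t)\,e^{\int_0^t \gamma(r,W_r)\,dr}\right]\bar{X}_0(dy),
\end{equation*}
where I used that $\bar{X}_0$ is supported on constant paths to reduce $u(0,y)$ to $\mathbb{E}_{y_0}$. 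Specialising to $\varphi\equiv 1$ produces the denominator; taking the ratio and reinterpreting $\mathbb{E}$ on the right-hand side as a joint expectation with $W_0$ drawn from $\bar{X}_0/\langle \bar{X}_0,1\rangle$ then delivers \eqref{FK1}.

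The main obstacle is the rigorous justification of the duality step: $u(s,\cdot)$ lives on an infinite-dimensional path space and the time dependence of $\widetilde{\Delta}$ enters through the piecewise indicators in \eqref{laplacien}, which forbids a naive application of It\^o's formula. The clean workaround is to first perform the entire computation with the smoother class \eqref{testfunction} of test functions $G_g$, for which $\mathcal{D}^2 G_g(s,y)$ is jointly smooth in $(s,y)$ and the product-rule differentiation of $\langle \bar{X}_s,u(s,\cdot)\rangle$ is routine; Lemma \ref{lien:fonctionstest} then allows the approximation of an arbitrary $\varphi$ of the form \eqref{testfunction:dawson} by such $G_{g_q}$, and passage to the limit is controlled via uniform boundedness of $\gamma$ and of the $g_j$'s together with \eqref{estimee_momentp}. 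I would stress that the linearity inherited from the absence of interaction is essential: as soon as $\gamma$ genuinely depends on $\bar{X}$, the forward equation for $m_s$ couples to higher moments and the whole duality identity collapses.
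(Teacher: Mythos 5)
Your argument is correct in substance but follows a genuinely different route from the paper. The paper normalizes first: it checks that both $\E\nu_t$ and the measure $\mathbb{M}_t$ defined by the right-hand side of \eqref{FK1} satisfy the same \emph{nonlinear} evolution equation
$\langle m_t,\varphi\rangle = \langle m_0,\varphi\rangle+ \tfrac{1}{2}\int_0^t \langle m_s,\widetilde{\Delta}\varphi(s,\cdot)\rangle ds + \int_0^t \big(\langle m_s,\varphi \gamma(s,\cdot)\rangle-\langle m_s,\varphi\rangle \langle m_s,\gamma(s,\cdot)\rangle\big)ds$
and concludes by uniqueness of its solution. You instead keep the unnormalized intensity $\E\bar{X}_t$, exploit that it satisfies a \emph{linear} forward equation, and close the argument by forward--backward duality with the Feynman--Kac function $u(s,y)$, computing numerator and denominator separately before taking the ratio. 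What your route buys is that uniqueness comes for free from the linear duality, whereas the paper must assert uniqueness for a quadratic equation; what it costs is the justification of the duality step, which requires extending the martingale problem to time-dependent test functions and admitting $u(s,\cdot)$ as a test function. On that point, one caveat: your parenthetical reduction to ``the finitely many coordinates of $W$ on which $\varphi$ depends'' is only legitimate when $\gamma(s,y)$ itself depends on finitely many coordinates of the path; in the general past-dependent setting (e.g. $b$ and $D$ built from $\nu_b$, $\nu_d$ integrals over the lineage), $u(s,\cdot)$ is a genuine path functional and the Markov property of the path-process $W$ (as in \eqref{def:W}) must be invoked instead of the classical finite-dimensional Feynman--Kac theorem. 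Your proposed approximation through the class \eqref{testfunction} via Lemma \ref{lien:fonctionstest} is the right repair, but it is sketched rather than carried out; with that step completed the proof is sound.
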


\begin{proof}Let us denote by $\mathbb{M}_t$ the probability measure defined by the right hand side of \eqref{FK1}. Using Itô's formula, we check that both $(\E\nu_t)_{t\in \R_+}$ and $(\mathbb{M}_t)_{t\in \R_+}$ solve the following evolution equation in $(m_t)_{t\in \R_+}$: for any test function $\varphi$ as in \eqref{testfunction:dawson},
\begin{align*}
\langle m_t,\varphi\rangle = \langle m_0,\varphi\rangle+ \frac{1}{2}\int_0^t \langle m_s,\widetilde{\Delta}\varphi(s,.)\rangle ds + \int_0^t \Big(\langle m_s,\varphi \gamma(s,.)\rangle-\langle m_s,\varphi\rangle \langle m_s,\gamma(s,.)\rangle\Big)ds,
\end{align*}which admits a unique solution.
\end{proof}

\noindent In the general case for $r(s,y)$, $p$ and $\sigma^2$, the standard historical Brownian motion has to be replaced with the path-process $W$ introduced in \eqref{def:W} and associated with the diffusion \eqref{def:Yt}. \\
The Feynman-Kac formula, in the case where there is no interaction but still possible dependence of the dynamics on the past, allows for instance easier simulation schemes for approximating quantities that can express in terms of $\E\nu_t$. The right hand side of \ref{FK1} can be approximated by Monte-Carlo methods without branching mechanism.

\subsection{Feynman-Kac formula in the logistic case}

Let us now establish a Feynman-Kac formula for the logistic case without past or trait dependence.
\begin{prop} Assume that  $r(s,y)= \bar{R}$ with $p\sigma^2 \bar{R}=1$ and  $\gamma(s,y,X)=\alpha-\eta \langle X,1\rangle$, where $\alpha,\ \eta>0$.
For any test function $\varphi$ as in \eqref{testfunction:dawson},
\begin{equation}
\langle \E\nu_t,\varphi\rangle=\frac{\E\Big(\varphi(W_t)e^{\int_0^t (\alpha N_s-\eta N_s^2) ds}\Big)}{\E\Big(e^{\int_0^t (\alpha N_s-\eta N_s^2)ds}\Big)},\label{FK2}
\end{equation}where $(W_t)_{t\in \R_+}$ is a standard historical Brownian process and  $N$ is the solution of the following equation:
\begin{equation}
N_t=\langle \bar{X}_0,1\rangle + \int_0^t \big(2\bar{R}+\alpha N_s-\eta N_s^2\big)ds+\int_0^t \sqrt{2\bar{R} N_s}\, dB_t,
\end{equation} $B$ being a standard Brownian motion independent of $W$.
\end{prop}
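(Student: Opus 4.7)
The plan is to mimic the proof of the preceding Feynman--Kac formula \eqref{FK1}: show that both sides of \eqref{FK2} satisfy the same evolution equation on $\mathcal{M}_F(\D_{\R^d})$ and conclude by uniqueness.

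First, the right-hand side simplifies. Set $Z_t := \exp(\int_0^t (\alpha N_s - \eta N_s^2)\,ds)$. Since $B$ is independent of $W$ by hypothesis, $Z_t$ is independent of $\varphi(W_t)$, and Fubini gives $\E[\varphi(W_t) Z_t]/\E[Z_t] = \E[\varphi(W_t)]$. With $p\sigma^2\bar{R} = 1$, $W$ is a standard historical Brownian motion, so It\^o's formula applied to $\varphi(W_t)$ for any $\varphi$ of the form \eqref{testfunction:dawson} shows that $t \mapsto m_t := \mathcal{L}(W_t)$ satisfies the heat equation
\begin{equation*}
\langle m_t, \varphi\rangle = \langle m_0, \varphi\rangle + \frac{1}{2}\int_0^t \langle m_s, \widetilde{\Delta}\varphi(s,\cdot)\rangle\,ds.
\end{equation*}

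For the left-hand side, set $M_s := \langle \bar{X}_s, 1\rangle$. Taking expectations in \eqref{martingalesecondcas} with $r = \bar{R}$ and $\gamma = \alpha - \eta M_s$, and then applying the quotient rule to $\langle \E\nu_t, \varphi\rangle = \E\langle \bar{X}_t, \varphi\rangle/\E M_t$, the $\alpha$-terms cancel (because $\gamma$ is spatially constant) and one obtains
\begin{equation*}
\frac{d}{dt}\langle \E\nu_t, \varphi\rangle = \frac{1}{2}\langle \E\nu_t, \widetilde{\Delta}\varphi\rangle + \frac{\eta}{\E M_t}\bigl(\langle \E\nu_t, \varphi\rangle\, \E M_t^2 - \E[M_t \langle \bar{X}_t, \varphi\rangle]\bigr).
\end{equation*}
So $\E\nu_t$ also solves the heat equation provided
\begin{equation*}
\E[M_t \langle \bar{X}_t, \varphi\rangle] \cdot \E M_t = \E M_t^2 \cdot \E\langle \bar{X}_t, \varphi\rangle. \qquad (\star)
\end{equation*}

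The main obstacle is to establish $(\star)$. I would argue as in Proposition \ref{prop_unique}(ii) and apply a Dawson--Girsanov transformation to a measure $\mathbb{Q}$ under which $\bar{X}$ is a historical super-Brownian motion with branching variance $2\bar{R}$. Because $\gamma = \alpha - \eta M_s$ depends only on the total mass, the Radon--Nikodym density $d\mathbb{P}/d\mathbb{Q}|_{\mathcal{F}_t}$ collapses to an explicit functional of the mass trajectory $(M_s)_{s \leq t}$ alone (integration of the martingale measure of $\bar{X}$ against $\gamma(s, M_s)$ reduces to integration against the one-dimensional mass martingale). Under $\mathbb{Q}$, the many-to-one formula for historical super-Brownian motion, derived from the Dawson--Perkins representation \eqref{perkinsrepresentation}, gives the conditional identity $\E^{\mathbb{Q}}[\langle \bar{X}_t, \varphi\rangle \mid (M_s)_{s \leq t}] = M_t\, \E[\varphi(W_t)]$. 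Since the Girsanov density is mass-measurable, this identity persists under $\mathbb{P}$. Taking expectations yields both $\E\langle \bar{X}_t, \varphi\rangle = \E M_t \cdot \E[\varphi(W_t)]$ and $\E[M_t \langle \bar{X}_t, \varphi\rangle] = \E M_t^2 \cdot \E[\varphi(W_t)]$, which together imply $(\star)$. With $(\star)$ in hand, both $\E\nu_t$ and $m_t$ solve the heat equation with common initial datum $\bar{X}_0/N_0$, so uniqueness (a special case of Proposition \ref{prop_unique}(ii) with $\gamma = 0$) yields \eqref{FK2}.
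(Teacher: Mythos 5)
Your reduction is correct as far as it goes: since $B$ is independent of $W$ (and $\bar{X}_0$ is deterministic), the right-hand side of \eqref{FK2} does collapse to $\E\big(\varphi(W_t)\big)$, and your computation of $\frac{d}{dt}\langle \E\nu_t,\varphi\rangle$ correctly isolates the identity $(\star)$, namely $\E\big(M_t\langle \bar{X}_t,\varphi\rangle\big)\,\E(M_t)=\E(M_t^2)\,\E\langle \bar{X}_t,\varphi\rangle$, as the one remaining point. The gap is in your justification of $(\star)$. A many-to-one (first-moment) formula for the historical super-Brownian motion under $\Q$ only yields the unconditional identity $\E^{\Q}\langle \bar{X}_t,\varphi\rangle=\langle \bar{X}_0,1\rangle\,\E\big(\varphi(W_t)\big)$; neither it nor the representation \eqref{perkinsrepresentation} produces the \emph{conditional} statement $\E^{\Q}\big(\langle \bar{X}_t,\varphi\rangle\mid (M_s)_{s\leq t}\big)=M_t\,\E\big(\varphi(W_t)\big)$, which is what you actually use. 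That conditional identity is Perkins' theorem that a Dawson--Watanabe process conditioned on its total-mass trajectory is a Fleming--Viot process whose intensity evolves by the one-particle semigroup (see \cite{perkins-SSP}, cited in Section \ref{section:r-gamma-constants} for precisely this connection); it is a substantive result, not a moment formula. Granting it, your transfer to $\P$ through the mass-measurable Girsanov density is sound and $(\star)$ follows; without it, the decisive decorrelation between the total mass and the normalized spatial distribution is asserted rather than proved.

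It is worth noting that the paper's own proof is organized exactly so as to avoid needing $(\star)$ as an input. It augments the state with the total mass, working with the measure $\bar{\nu}_t$ on $\Co(\R_+,\R^d)\times\R_+$ defined by $\langle \bar{\nu}_t,\varphi\otimes g\rangle=\E\big(\langle \bar{X}_t,\varphi\rangle g(\bar{N}_t)\big)/\E(\bar{N}_t)$, derives a closed evolution equation \eqref{etape6} for this joint object, and checks that the candidate built from the independent pair $(W,N)$ solves the same equation; uniqueness then gives \eqref{FK2}, and the product structure you need is obtained as an output rather than assumed. To complete your route, either prove the conditional identity properly via \cite{perkins-SSP}, or fall back on the joint-evolution-equation argument.
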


\begin{proof}Let $\varphi$ be a test function of the form \eqref{testfunction:dawson} and $g\in \Co^2_b(\R_+,\R)$.
From \eqref{martingalecasdawson}, we get that the process $\bar{N}_t = \langle \bar{X}_t,1\rangle$ satisfies the following martingale problem:
\begin{align}
M^1_t = & \bar{N}_t-\bar{N}_0 - \int_0^t \big(\alpha \bar{N}_s-\eta \bar{N}_s^2\big)ds\label{etape3}
\end{align} is a square integrable martingale with quadratic variation $
 \langle M^1\rangle_t=\int_0^t 2\bar{R} \bar{N}_s\ ds$. Taking the expectation leads to:
\begin{align}
\E(\bar{N}_t)= & \E(\bar{N}_0)+\int_0^t \E\big(\alpha \bar{N}_s - \eta \bar{N}_s^2\big)ds.\label{etape5}
\end{align}

Using Itô's formula with \eqref{martingalecasdawson}, \eqref{etape3}, \eqref{etape5} and taking the expectation,
\begin{align*}
\E\Big(\frac{\langle \bar{X}_t,\varphi\rangle g(\bar{N}_t)}{\E(\bar{N}_t)}\Big)= & \E\Big(\frac{\langle \bar{X}_0,\varphi\rangle g(\bar{N}_0)}{\E(\bar{N}_0)}\Big)+\int_0^t \Big[\E\Big(\frac{g(\bar{N}_s)}{\E(\bar{N}_s)} \langle \bar{X}_s,\frac{1}{2}\widetilde{\Delta}\varphi\rangle\Big)\\
+ &   \E\Big(\frac{g(\bar{N}_s)(\alpha\bar{N}_s-\eta \bar{N}_s^2)}{\E(\bar{N}_s)}\langle \bar{X}_s,\varphi\rangle\Big) - \frac{1}{\E(\bar{N}_s)^2}\E\big(g(\bar{N}_s)\langle \bar{X}_s,\varphi\rangle\big)\E\big(\alpha \bar{N}_s-\eta \bar{N}^2_s\big)\\
 + &\E\Big(\frac{\langle \bar{X}_s,\varphi\rangle}{\E(\bar{N}_s)}g'(\bar{N}_s)\big(\alpha \bar{N}_s-\eta \bar{N}_s^2\big) \Big)
 + \frac{1}{2}\E\Big(\frac{\langle \bar{X}_s,\varphi\rangle}{\E(\bar{N}_s)}g''(\bar{N}_s)\big(2\bar{R} \bar{N}_s\big)\Big)\\
 + &\frac{2}{2}\E\Big(\frac{g'(\bar{N}_s)}{\E(\bar{N}_s)}2\bar{R} \langle \bar{X}_s,\varphi\rangle \Big)\Big]ds.
\end{align*}If we define the measure
$\bar{\nu}_t$ on $\Co(\R_+,\R^d)\times \R_+$ by
$$\langle \bar{\nu}_t,\varphi\otimes g\rangle=\E\Big(\frac{\langle \bar{X}_t,\varphi\rangle g(\bar{N}_t)}{\E(\bar{N}_t)}\Big),$$then $(\bar{\nu}_t)_{t\in\R_+}$ solves the following equation
\begin{align}
\langle \bar{\nu}_t,\varphi\otimes g\rangle=& \langle \bar{\nu}_0,\varphi\otimes g\rangle+\int_0^t \Big[\Big\langle \bar{\nu}_s,\frac{1}{2}\widetilde{\Delta}\varphi\otimes g+  \varphi \otimes (g(n)(\alpha n-\eta n^2)) \nonumber\\
& + \varphi\otimes g'(n)(\alpha n -\eta n^2)+\frac{1}{2}\varphi \otimes 2\bar{R} g''(n)n+ \varphi\otimes 2\bar{R} g'\Big\rangle \nonumber\\
& -  \langle \bar{\nu}_s,\varphi \otimes g\rangle \langle \bar{\nu}_s,1 \otimes (\alpha n-\eta n^2)\rangle\Big]ds.\label{etape6}
\end{align}
Let $Y_t=\exp\big(\int_0^t (\alpha N_s-\eta N_s^2)ds\big)$. Using Itô's formula with the processes $(W_t)_{t\in \R_+}$ and $(N_t)_{t\in \R_+}$, we get
\begin{align*}
\E\Big(\frac{\varphi(W_t)g(N_t)Y_t}{\E(Y_t)}\Big)= & \E\Big(\varphi(W_0)g(N_0)\Big)+\int_0^t \Big[
\E\Big(\frac{1}{2}\widetilde{\Delta}\varphi (W_s)\frac{g(N_s)Y_s}{\E(Y_s)}\Big)\\
& +\E\Big(\frac{g(N_s)}{\E(Y_s)}\varphi(W_s)\big(\alpha N_s-\eta N_s^2\big)Y_s\Big)\\
& - \frac{1}{\E(N_s)^2}\E\Big(\varphi(W_s)g(N_s)Y_s\Big)\E\Big(\alpha N_s-\eta N_s^2\Big)\\
& +\E\Big(\frac{\varphi(W_s)g'(N_s)Y_s}{\E(Y_s)}\big(2\bar{R}+\alpha N_s-\eta N_s^2\big)\Big)+\E\Big(\frac{1}{2}\frac{\varphi(W_s)g''(N_s)Y_s}{\E(Y_s)}2\bar{R}N_s\Big)
\Big]ds.
\end{align*}
The measure $\widetilde{\nu}_t$ on $\Co(\R_+,\R^d)\times \R_+$ defined by
$$\langle \widetilde{\nu}_t,\varphi\otimes g\rangle=\E\Big(\frac{\varphi(W_t)g(N_t)Y_t}{\E(Y_t)}\Big)$$is hence another solution of \eqref{etape6}. The uniqueness of the solution of this equation provides the announced result.
\end{proof}

\bigskip
\noindent \textbf{Acknowledgements:} The authors thank the Associate Editor for his constructive comments. This work benefited from the support of the  ANR MANEGE (ANR-09-BLAN-0215), from the Chair ``Modélisation Mathématique et Biodiversité" of Veolia Environnement-Ecole Polytechnique-Museum National d'Histoire Naturelle-Fondation X. It also benefited from the visit of both authors to the Institute for Mathematical Sciences of the National University of Singapore in 2011.

\appendix

\section{Properties of the test functions \eqref{testfunction} and \eqref{testfunction:dawson}}\label{appendixA}

We begin with a lemma that will be useful to link the classes of test functions \eqref{testfunction} and \eqref{testfunction:dawson}. This lemma also shows that the class of test functions \eqref{testfunction} separates points.

\begin{lemme}\label{lemme-passage1}For $q\in \N^*$, recall that we denote by $k^q(u)$ the density of the Gaussian distribution with mean 0 and variance $1/q$. Let $g\in \Co_b(\R^d,\R)$. For $G(x)=x$ and $g_q(s,x)=k^q(t-s)g(x)$, we have for all $y\in \D_{\R^d}$ and all $t\in [0,T]$ at which $y$ is continuous that:
$$\lim_{q\rightarrow+\infty}G_{g_q}(y)=g(y_t).$$
\end{lemme}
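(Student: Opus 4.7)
The plan is to recognize $G_{g_q}(y) = \int_{0}^{T} k^q(t-s)\,g(y_s)\,ds$ as the convolution of the bounded measurable function $s \mapsto g(y_s)$ with a Gaussian mollifier concentrating at $0$, and then invoke the standard pointwise convergence of mollifications at continuity points. Since $G(x)=x$, Definition \ref{fonction-test:def} gives exactly this integral representation; since $y$ is c\`adl\`ag and $g$ is continuous and bounded, $s\mapsto g(y_s)$ is bounded and measurable, and it is continuous at every $t$ at which $y$ is continuous.

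The main step is the decomposition
\begin{equation*}
G_{g_q}(y)-g(y_t) = \int_0^T k^q(t-s)\bigl[g(y_s)-g(y_t)\bigr]\,ds + g(y_t)\left(\int_0^T k^q(t-s)\,ds - 1\right).
\end{equation*}
For the second term I would perform the change of variable $u=t-s$, yielding $\int_{t-T}^{t} k^q(u)\,du$, and use that $k^q$ is the density of $\mathcal{N}(0,1/q)$ to conclude, for $t\in (0,T)$, that this integral tends to $1$ as $q\to\infty$, so the whole term vanishes. For the first term, given $\eps>0$, the joint continuity of $g$ and of $y$ at $t$ yields $\delta>0$ such that $|g(y_s)-g(y_t)|<\eps$ whenever $|s-t|<\delta$ and $s\in[0,T]$; splitting the integral at $|s-t|<\delta$ versus $|s-t|\geq\delta$, the near piece is bounded by $\eps\int_\R k^q(u)\,du = \eps$, while the far piece is bounded by $2\|g\|_\infty \int_{|u|\geq\delta}k^q(u)\,du$, which tends to $0$ by standard Gaussian tail estimates. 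Letting first $q\to\infty$ and then $\eps\to 0$ finishes the argument.

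The only real subtlety (and perhaps the only obstacle worth flagging) is boundary behaviour: at $t=0$ or $t=T$, the mass $\int_0^T k^q(t-s)\,ds$ tends to $\tfrac12$ rather than $1$, so the naive statement would fail at endpoints. For interior $t$ the argument above is clean; the lemma is therefore to be read with $t\in(0,T)$ (which is exactly what is needed in its downstream application in Lemma \ref{lien:fonctionstest} and Proposition \ref{prop_unique}, where the $t_j$'s can be taken strictly inside the interval by enlarging $T$). Beyond that, the proof is a routine mollification computation with no hidden difficulty.
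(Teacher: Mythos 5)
Your proof is correct and follows essentially the same mollification argument as the paper: split the integral at $|s-t|<\alpha$ versus $|s-t|\geq\alpha$, bound the near piece by continuity of $g\circ y$ at $t$, and kill the far piece with the Gaussian tail. You are in fact slightly more careful than the paper, whose proof writes $|G_{g_q}(y)-g(y_t)|=\big|\int_0^T k^q(t-s)\big(g(y_s)-g(y_t)\big)\,ds\big|$ as an exact identity and thereby silently assumes $\int_0^T k^q(t-s)\,ds=1$; your explicit handling of the missing mass, and your observation that the stated limit fails at $t=0$ and $t=T$ (where one only gets $\tfrac{1}{2}\,g(y_t)$), is a legitimate refinement that the downstream use in Lemma \ref{lien:fonctionstest} should accommodate by keeping the times $t_j$ in the open interval.
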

\begin{proof}First notice that all the $G_{g_q}$ are bounded by $\|g\|_\infty$. Let $\varepsilon>0$. Since $y$ is continuous at $t$, so is $g\circ y$ and there exists $\alpha>0$ sufficiently small so that for every $s\in (t-\alpha,t+\alpha)$, $|g(y_s)-g(y_t)|\leq \varepsilon/2$. We can then choose $q$ sufficiently large such that
$$\int_{|t-s|>\alpha} k^q(t-s)ds <\frac{\varepsilon}{4\|g\|_\infty}.$$
Then:
\begin{align*}
|G_{g_q}(y)-g(y_t)|= & \Big|\int_0^T k^q(t-s)\big(g(y_s)-g(y_t)\big)ds \Big|\\
\leq & 2 \|g\|_\infty \varepsilon \int_{|s-t|\geq \alpha} k^q(t-s) ds + \frac{\varepsilon}{2} \int_{|s-t|<\alpha}k^q(t-s)ds\leq \varepsilon.
\end{align*}
\end{proof}

We are now in position to give the:

\begin{proof}[Proof of Lemma \ref{lien:fonctionstest}]
We can assume without restriction that the functions $g_j$ in the definition of $\varphi$ \eqref{theoremjakubowskihistorique} are positive. Let us define for $y\in \Co(\R_+,\R^d)$:
\begin{align*}
\varphi_q(y)=\exp \Big( \int_0^T \sum_{j=1}^m \log g_j(y_s) k^q(t_j-s) ds\Big).
\end{align*}By Lemma \ref{lemme-passage1}, the term in the integral is bounded uniformly in $q$ and $y$ and converges when $q$ tends to infinity to
$\sum_{j=1}^{m} \log g_j(y_{t_j}).$ As a consequence, for every $y\in \Co(\R_+,\R^d)$, the sequence $(\varphi_q(y))_{n\in \N^*}$ is bounded and converges to
$$\exp\Big(\sum_{j=1}^{m} \log g_j(y_{t_j})\Big)=\prod_{j=1}^{m}g_j(y_{t_j})=\varphi(y)$$when $q$ tends to infinity. Moreover,
\begin{align*}
\mathcal{D}^2\varphi_q(t,y)= & \exp\Big(\int_0^T \sum_{j=1}^m \log g_j(y_s) k^q(t_j-s) ds\Big)\\
 & \times \Big(\int_t^T \sum_{j=1}^m \Delta_x (\log g_j)(y_t) k^q(t_j-s)ds+\sum_{i=1}^d \Big(\int_t^T \sum_{j=1}^m \partial_{x_i} (\log g_j)(y_t) k^q(t_j-s)ds\Big)^2\Big).
\end{align*}When $q$ tends to infinity, we have by Lemma \ref{lemme-passage1}:
\begin{align*}
\lim_{q\rightarrow +\infty} \mathcal{D}^2\varphi_q(t,y)= &  \prod_{j=1}^m g_j(y_{t_j}) \Big( \sum_{j\ |\ t_j>t} \frac{\Delta_x g_j(y_t)}{g_j(y_t)}-\sum_{i=1}^d \frac{\big(\partial_{x_i} g_j(y_t)\big)^2}{g^2_j(y_t)} +\sum_{i=1}^d \Big(\sum_{j\ |\ t_j>t} \frac{\partial_{x_i} g_j(y_t)}{g_j(y_t)}\Big)^2\Big)\\
= &  \prod_{j=1}^m g_j(y_{t_j}) \Big( \sum_{j\ |\ t_j>t} \frac{\Delta_x g_j(y_t)}{g_j(y_t)}+2 \sum_{i=1}^d \sum_{\substack{j\not= k\\ t_j,t_k>t}}\frac{\partial_{x_i} g_j(y_t) \partial_{x_i} g_k(y_t)}{g_j(y_t)g_k(y_t)}\Big)\\
= &  \prod_{j=1}^m g_j(y_{t_j}) \frac{\Delta_x\big(\prod_{j\ |\ t_j>t}g_j\big)(y_t)}{\prod_{j\ |\ t_j>t}g_j(y_t)}=\widetilde{\Delta}\Big(\prod_{j=1}^m g_j\Big)(t,y).
\end{align*}This concludes the proof.

\end{proof}

\section{Technical result on concatenated paths}\label{section:concat}

Let $\H$ be the set of increasing bijections from $[0,T]$ to $[0,T]$, where $T>0$.
We recall that the Skorokhod distance is defined for $y,z\in \D$ by:
\begin{equation}
\dSk(y,z)=\inf_{\lambda\in \H}\max\left\{\|y\circ \lambda -z\|_\infty, \sup_{t,s<T}\Big|\log\Big(\frac{\lambda(t)-\lambda(s)}{t-s}\Big)\Big|\right\}.\label{dsk}
\end{equation}

In the sequel, we consider $y, \ z\in \D$ and $s,r\in [0,T]$. Without loss of generality, we can assume that $s<r$.

\begin{prop}\label{prop_Sk}
If $\dSk(y,z)<\varepsilon$ and if $s$ and $r$ are sufficiently close so that:
\begin{equation}
0\leq \max \Big\{\log\frac{r}{s}, \log\frac{T-s}{T-r}\Big\} \leq \varepsilon.\label{condition}
\end{equation}Then for all $w\in \D$,
$\dSk((y|s|w),(z|r|w))<3\varepsilon$.
\end{prop}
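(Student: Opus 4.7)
The plan is to exhibit a single time change $\lambda \in \H$ realizing $\dSk((y|s|w),(z|r|w)) < 3\varepsilon$, i.e.\ such that both $\|(y|s|w)\circ\lambda - (z|r|w)\|_\infty < 3\varepsilon$ and the log-slope $\sup_{u\neq v}|\log((\lambda(v)-\lambda(u))/(v-u))| < 3\varepsilon$. From $\dSk(y,z)<\varepsilon$ I first extract a witness $\mu \in \H$ with $\|y\circ\mu - z\|_\infty < \varepsilon$ and $\gamma(\mu) := \sup |\log((\mu(v)-\mu(u))/(v-u))| < \varepsilon$. The guiding observation is that $(y|s|w)$ and $(z|r|w)$ switch from the ``prefix'' part to $w$ at the distinguished times $s$ and $r$ respectively, so the natural constraint on $\lambda$ is $\lambda(r)=s$.

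Accordingly I build $\lambda$ piecewise: on $[0,r]$ take the rescaled $\mu$,
\[
\lambda(u) = \tfrac{s}{\mu(r)}\,\mu(u),
\]
so that $\lambda(0)=0$ and $\lambda(r)=s$; on $[r,T]$ take the affine map
\[
\lambda(u) = s + \tfrac{T-s}{T-r}(u-r),
\]
so that $\lambda(r)=s$ and $\lambda(T)=T$, making $\lambda$ a continuous increasing bijection of $[0,T]$. For the log-slope check on $[0,r]$, the incremental slope factorizes as $(s/\mu(r))\cdot(\mu(v)-\mu(u))/(v-u)$; the hypothesis $\log(r/s)\leq \varepsilon$ gives $|\log(s/r)|\leq\varepsilon$, and applying $\gamma(\mu)<\varepsilon$ to the pair $(0,r)$ gives $|\log(\mu(r)/r)|\leq\varepsilon$, whence $|\log(s/\mu(r))|\leq 2\varepsilon$ and the total log-slope is at most $3\varepsilon$. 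On $[r,T]$ the slope is the constant $(T-s)/(T-r)$, whose log is $\leq \varepsilon$ by hypothesis. For the mixed pairs $u \in [0,r]$, $v \in [r,T]$ I decompose $\lambda(v)-\lambda(u) = (\lambda(v)-s)+(s-\lambda(u))$ and $v-u = (v-r)+(r-u)$ and use that a convex-combination of two slopes lying in $[e^{-3\varepsilon},e^{3\varepsilon}]$ stays in the same range.

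For the sup-norm, on $[0,r)$ we have $\lambda(u)<s$, so $(y|s|w)\circ\lambda(u) = y(\lambda(u))$ and $(z|r|w)(u)=z(u)$; the target $|y(\lambda(u))-z(u)|<3\varepsilon$ is reduced via $\|y\circ\mu-z\|_\infty<\varepsilon$ to controlling $|y(\lambda(u))-y(\mu(u))|$. On $[r,T]$, $\lambda(u)\geq s$, so $(y|s|w)\circ\lambda(u) = w(\lambda(u)-s) = w\bigl(\tfrac{T-s}{T-r}(u-r)\bigr)$ while $(z|r|w)(u)=w(u-r)$, so the target reduces to controlling $|w(\alpha v)-w(v)|$ for $v\in[0,T-r]$ and $\alpha = (T-s)/(T-r) \in [1,e^\varepsilon]$.

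The hard part will be precisely these sup-norm comparisons. Since $y$ and $w$ are only c\`adl\`ag, a multiplicative rescaling of the argument by a factor of order $1+\varepsilon$ can produce an $O(1)$ pointwise error near a jump, so the naive $\lambda$ above is not by itself enough: the construction must be refined so that the mild stretches absorbed by $\lambda$ respect the jump structure of $y$ and $w$, for instance by perturbing $\lambda$ locally so that $\lambda(u)$ crosses each jump time of $y$ at the correct $u$, and so that $\lambda(u)-s$ crosses each jump time of $w$ at $u-r$. The key input will be the hypothesis $\log((T-s)/(T-r))\leq\varepsilon$, which guarantees room to align the finitely many jumps of $w$ that lie in $[0,T-r]$ at cost $\leq \varepsilon$ in log-slope, together with the witness $\mu$ for the jumps of $y$ versus $z$. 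The remaining sup-norm budget of $\varepsilon$ then comes from $\|y\circ\mu-z\|_\infty$, giving the total $3\varepsilon$ in the conclusion.
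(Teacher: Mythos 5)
Your construction stalls exactly where you say it does, and the refinement you sketch cannot close the gap. Once you commit to a single time change $\lambda\in\H$ with $\lambda(r)=s$ and then try to control the \emph{sup-norm} term $\sup_{v}|w(\eta(v))-w(v)|$, where $\eta(v)=\lambda(v+r)-s$ maps $[0,T-r]$ onto $[0,T-s]$, you are asking for a bound that is uniform over all $w\in\D$, and no such bound exists for any fixed non-identity $\eta$: since the two intervals differ, there is some $v_1$ with $\eta(v_1)\neq v_1$, and $w=\ind_{[c,\infty)}$ with $c$ between $v_1$ and $\eta(v_1)$ already gives a sup-norm error equal to $1$. The proposed repair fails on all fronts. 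A c\`adl\`ag path may have countably many, even dense, jump times, so ``aligning the finitely many jumps of $w$'' is not available; for a continuous $w$ the error $|w(\eta(v))-w(v)|$ is governed by the modulus of continuity of $w$, which is arbitrary and not tied to $\varepsilon$; and if $w$ has a unit jump at $\tau$ close to $T-r$, forcing $\eta(\tau)=\tau$ while $\eta(T-r)=T-s$ costs a log-slope of order $\log\bigl(1+(r-s)/(T-r-\tau)\bigr)$, which blows up. The time distortion of the $w$-part simply cannot be paid for in the sup-norm component of \eqref{dsk}.

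The paper pays for it in the \emph{log-slope} component instead, and this is the idea your outline is missing. It inserts the intermediate path $(y\circ\lambda_0\,|r|\,w\circ\lambda_0)$, with $\lambda_0$ the piecewise-affine bijection \eqref{lambda0} sending $s$ to $r$, and applies the triangle inequality for $\dSk$. The whole content of Lemma \ref{lemme1} is that $\dSk(w\circ\lambda_0,w)\leq\varepsilon$ for \emph{every} $w\in\D$ simultaneously: one tests the infimum in \eqref{dsk} with the time change $\lambda_0^{-1}$, so the sup-norm term becomes $\|w\circ\lambda_0\circ\lambda_0^{-1}-w\|_\infty=0$ identically, and what remains is only the log-slope of $\lambda_0$, which is exactly what hypothesis \eqref{condition} bounds by $\varepsilon$. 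In other words, the Skorokhod metric is insensitive to a global near-identity reparametrization because the reparametrization can be undone inside the infimum, whereas your argument compares $w$ pointwise with a reparametrized copy of itself, which the $J_1$ topology does not permit. The prefix comparison of $y$ against $z$ is then handled essentially as you intended, by composing a witness for $\dSk(y,z)<\varepsilon$ with $\lambda_0^{-1}$. To salvage a one-time-change strategy you would have to let $\lambda$ depend on $w$ on $[r,T]$, i.e.\ use there a witness for $\dSk(w\circ\lambda_0,w)$ rather than a fixed affine map, at which point you have reconstructed the paper's argument.
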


In the proof, we will need the following change of time $\lambda_0\in \H$:
\begin{equation}
\lambda_0(u)=\frac{r}{s}u\ind_{u\leq s}+\Big(r+\frac{T-r}{T-s}(u-s)\Big)\ind_{u>s}.\label{lambda0}
\end{equation}
The bijection $\lambda_0$ depends on $r$ and $s$. For $u$ and $v\in [0,T]$, we have:
\begin{align}
\Big|\log\Big(\frac{\lambda_0(u)-\lambda_0(v)}{u-v}\Big)\Big|\leq \max\Big\{\log\Big(\frac{r}{s}\Big),\log\Big(\frac{T-s}{T-r}\Big)\Big\}.
\end{align}The right hand side converges to 0 when $r/s$ converges to 1, and is upper bounded by $\varepsilon$ under the Assumptions \eqref{condition} of Proposition \ref{prop_Sk}.

\begin{lemme}\label{lemme1}For all $w\in \D$. If \eqref{condition} is satisfied, then $\dSk(w\circ \lambda_0, w)\leq \varepsilon.$
\end{lemme}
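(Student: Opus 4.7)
The plan is to test the Skorokhod infimum in \eqref{dsk} with the explicit time change $\lambda = \lambda_0^{-1}$. With this choice, $(w\circ\lambda_0)\circ\lambda_0^{-1} = w$, so the uniform-distance term vanishes identically:
\begin{equation*}
\|(w\circ\lambda_0)\circ\lambda_0^{-1} - w\|_\infty = 0.
\end{equation*}
Consequently $\dSk(w\circ\lambda_0, w)$ is bounded above by the ``time-change norm'' $\|\lambda_0^{-1}\|_{\mathrm{Sk}}:=\sup_{0\leq u<v\leq T}\big|\log\big((\lambda_0^{-1}(v)-\lambda_0^{-1}(u))/(v-u)\big)\big|$.

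Next I would use the elementary identity $\|\mu^{-1}\|_{\mathrm{Sk}} = \|\mu\|_{\mathrm{Sk}}$ valid for any $\mu\in\H$ (set $a=\mu^{-1}(u),b=\mu^{-1}(v)$ inside the logarithm), so it is enough to bound $\|\lambda_0\|_{\mathrm{Sk}}$. The function $\lambda_0$ defined in \eqref{lambda0} is piecewise affine, with slope $r/s>1$ on $[0,s]$ and slope $(T-r)/(T-s)<1$ on $[s,T]$. For any $0\leq u<v\leq T$, the difference quotient $(\lambda_0(v)-\lambda_0(u))/(v-u)$ is a convex combination of these two slopes (if $u\leq s\leq v$, write $\lambda_0(v)-\lambda_0(u)=(r/s)(s-u)+((T-r)/(T-s))(v-s)$; otherwise it equals one of the two slopes). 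Hence
\begin{equation*}
\frac{T-r}{T-s}\ \leq\ \frac{\lambda_0(v)-\lambda_0(u)}{v-u}\ \leq\ \frac{r}{s}.
\end{equation*}

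Taking logarithms and invoking condition \eqref{condition},
\begin{equation*}
\Big|\log\frac{\lambda_0(v)-\lambda_0(u)}{v-u}\Big|\ \leq\ \max\Big\{\log\frac{r}{s},\ \log\frac{T-s}{T-r}\Big\}\ \leq\ \varepsilon,
\end{equation*}
which gives $\|\lambda_0\|_{\mathrm{Sk}}\leq\varepsilon$, hence $\|\lambda_0^{-1}\|_{\mathrm{Sk}}\leq\varepsilon$, and finally $\dSk(w\circ\lambda_0,w)\leq\varepsilon$. There is no real obstacle here: the only observation beyond unravelling definitions is that the piecewise-linear structure of $\lambda_0$ forces every secant slope to lie between the two linear pieces, so the global Skorokhod modulus of $\lambda_0$ coincides with its worst local log-slope.
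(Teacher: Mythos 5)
Your proof is correct and follows exactly the paper's argument: test the infimum in \eqref{dsk} with $\lambda=\lambda_0^{-1}$, so the uniform term vanishes and only the log-slope term remains, which is controlled by $\max\{\log(r/s),\log((T-s)/(T-r))\}\leq\varepsilon$ via \eqref{condition}. The only difference is that you spell out the convex-combination argument for the secant slopes and the invariance of the log-slope modulus under inversion, which the paper states without detail in the display preceding the lemma.
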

\begin{proof}The infimum in \eqref{dsk} can be upper bounded by choosing $\lambda=\lambda_0^{-1}$, which is the inverse bijection of $\lambda_0$:
\begin{equation}
\lambda^{-1}_0(u)=\frac{s}{r}u\ind_{u\leq r}+\Big(s+\frac{T-s}{T-r}(u-r)\Big)\ind_{u>r}.\label{lambda0-1}
\end{equation}
For such choice, we have:
\begin{align*}
\dSk(w\circ \lambda_0, w)\leq & \max\Big\{0,\max\Big(\log\frac{r}{s},\log \frac{T-s}{T-r}\Big)\Big\} \leq \varepsilon.
\end{align*}
\end{proof}

Let us now prove Proposition \ref{prop_Sk}:
\begin{proof}
By the triangular inequality:
\begin{align}
& \dSk((y|s|w),(z|r|w))\leq  A+B,\qquad \mbox{ where }\\
& A= d((y|s|w),(y\circ \lambda_0 |r| w\circ \lambda_0)) \nonumber\\
& B= d((y\circ \lambda_0 |r| w\circ \lambda_0),(z|r|w)).\nonumber
\end{align}By Lemma \ref{lemme1}, $A\leq \varepsilon$. For the second term, using Lemma \ref{lemme1} again:
\begin{align}
B\leq & \dSk(y\circ \lambda_0,z)+d(w\circ \lambda_0,w)\leq  \dSk(y\circ \lambda_0,z)+\varepsilon.
\end{align}
Now, since $\dSk(y,z)\leq \varepsilon$, there exists $\lambda\in \H$ such that $\|y\circ \lambda-z\|_\infty\leq 2\varepsilon$ and
$$\sup_{u,v\leq T}\Big|\log \frac{\lambda(u)-\lambda(v)}{u-v}\Big|\leq 2\varepsilon.$$Then, considering the change of time $\lambda_0^{-1}\circ \lambda$:
\begin{align*}
\lefteqn{\dSk(y\circ \lambda_0,z)\leq  \max\Big\{\|y\circ \lambda_0\circ \lambda_0^{-1}\circ \lambda - z\|_\infty, \ \sup_{u,v\leq T}\Big|\log \frac{\lambda_0^{-1}\circ \lambda(u)-\lambda_0^{-1}\circ \lambda(v)}{u-v}\Big|\Big\}}\\
\leq & \max\Big\{\|y\circ \lambda-z\|_\infty, \ \sup_{u,v\leq T}\Big|\log \frac{\lambda_0^{-1}\circ \lambda(u)-\lambda_0^{-1}\circ \lambda(v)}{\lambda(u)-\lambda(v)}\Big| + \sup_{u,v\leq T}\Big|\log \frac{\lambda(u)-\lambda(v)}{u-v}\Big| \Big\}\\
\leq & \max\Big(\varepsilon,3\varepsilon\Big).
\end{align*}
\end{proof}



{\footnotesize
\providecommand{\noopsort}[1]{}\providecommand{\noopsort}[1]{}\providecommand{\noopsort}[1]{}\providecommand{\noopsort}[1]{}

}

\end{document}